\newtheorem{thm}{Theorem}[section]
\newtheorem{prop}[thm]{Proposition}
\newtheorem{cor}[thm]{Corollary}
\newtheorem{lem}[thm]{Lemma}
\theoremstyle{definition}
\newtheorem{rem}[thm]{Remark}
\newtheorem{def1}[thm]{Definition}
\newcommand{\ra}{\rightarrow}
\newcommand{\mc}{\mathcal}
\newcommand{\mb}{\mathbb}
\newcommand{\sg}{\sigma}
\newcommand{\vp}{\varphi}
\newcommand{\R}{\mb{R}}
\newcommand{\C}{\mb{C}}
\newcommand{\N}{\mb{N}}
\newcommand{\Z}{\mb{Z}}
\newcommand{\E}{\mb{E}}
\newcommand{\F}{\mc{F}}
\newcommand{\G}{\mc{G}}
\newcommand{\U}{\mc{U}}
\renewcommand{\ss}{\substack}
\newcommand{\e}{\varepsilon}
\newcommand{\mbf}{\boldsymbol}
\newcommand{\asum}{\sideset{}{^{\ast}}\sum}
\renewcommand{\bar}{\overline}
\renewcommand{\bmod}{\text{ mod }}
\newcommand{\bfree}{\mathbf{1}_{B\text{-free}}}
\newcommand{\Nbfree}{N_{B\text{-free}}}
\newcommand{\meanb}{\mathcal{M}_B}
\newcommand{\sgb}{\langle B\rangle}
\newcommand{\sgbsf}{[B]}
\newcommand{\mobb}{\mu_B}
\newcommand{\lcm}{\mathrm{lcm}}
\newcommand{\intervalind}{\mbf{1}_{(0,1]}}
\title{Squarefrees are Gaussian in short intervals}
\author{Ofir Gorodetsky, Alexander P. Mangerel, Brad Rodgers}
\newcommand{\Addresses}{{
  \footnotesize
  \bigskip
  \footnotesize

	\textsc{Mathematical Institute, University of Oxford, Oxford, OX2 6GG, UK}\par\nopagebreak
	\textit{E-mail address:} \texttt{ofir.goro@gmail.com}

	\medskip

	\textsc{Department of Mathematical Sciences, Durham University, Durham, DH1 3LE, UK}\par\nopagebreak
	\textit{E-mail address:} \texttt{smangerel@gmail.com}

	\medskip

	\textsc{Department of Mathematics and Statistics, Queen's University, Kingston, Ontario, K7L 3N6, Canada}\par\nopagebreak
	\textit{E-mail address:} \texttt{brad.rodgers@queensu.ca}

}}
\date{}
\numberwithin{equation}{section}
\begin{document}

\begin{abstract}
We show that counts of squarefree integers up to $X$ in short intervals of size $H$ tend to a Gaussian distribution as long as $H\rightarrow\infty$ and $H = X^{o(1)}$. This answers a question posed by R.R. Hall in 1989. More generally we prove a variant of Donsker's theorem, showing that these counts scale to a fractional Brownian motion with Hurst parameter $1/4$. In fact we are able to prove these results hold in general for collections of $B$-free integers as long as the sieving set $B$ satisfies a very mild regularity property, for Hurst parameter varying with the set $B$.
\end{abstract}

\maketitle

\section{Introduction}

\subsection{Statistics of counts of squarefrees}

Let $S \subset \N$ be the set of squarefree natural numbers (that is natural numbers without a repeated prime factor; by convention we include $1 \in S$). We write \[N_S(x) = |\{n \leq x:\; n \in S\}|\]
for the number of squarefrees no more than $x$. It is well known that $N_S(x) \sim x/\zeta(2)$, thus the squarefrees have asymptotic density $1/\zeta(2) = 6/\pi^2$. Our purpose in this note is to investigate their distribution at a finer scale. In particular we will investigate the distribution of squarefrees in a random interval $(n,n+H]$, where $n$ is an integer chosen uniformly at random from $1$ to $X$, with $X\rightarrow\infty$.

If $H$ is fixed and does not grow with $X$, at most $O(1)$ squarefrees can lie in such an interval. Their distribution as $X\rightarrow\infty$ is slightly complicated but completely understood; it may be described by Hardy--Littlewood type correlations which can be derived from elementary sieve theory (see \cite{Mir1, Mir2}). Or, more abstractly, the distribution of squarefrees in an interval of size $O(1)$ can be described by a non-weakly mixing stationary ergodic process (see \cite{Cellarosi2013, sarnak2011three}).

For $H$ tending to infinity with $X$ matters become at once simpler and more difficult; simpler because some of the irregularities in the distribution just described are smoothed out at this scale, but more difficult in that natural conjectures become more difficult to prove.

Let \[N_S(n,H) = N_S(n+H)-N_S(n)\]
be the count of squarefrees in the interval $(n,n+H]$. R.R. Hall \cite{Hall1, Hall2} was the first to investigate the distribution of this count when $H$ grows with $X$. In \cite[Corollary~1]{Hall1}, Hall proved that the variance of the number of squarefrees is of order $\sqrt{H}$ if $H$ is not too large with respect to $X$. More exactly, as $X\rightarrow\infty$ we have
\begin{equation}
\label{eq:variance_asymptotic}
\frac{1}{X} \sum_{n\le X}  \left( N_S(n,H) - \frac{H}{\zeta(2)} \right)^2 \sim A\sqrt{H}, \qquad A = \frac{\zeta\left(\frac{3}{2}\right)}{\pi} \prod_{p \text{ prime}} \left( 1-\frac{3}{p^2} + \frac{2}{p^3}\right)
\end{equation}
as long as $H \to \infty$ with $H \le X^{2/9-\varepsilon}$. 

Keating and Rudnick \cite{Keating2016} studied this problem in a function field setting, connecting it with Random Matrix Theory, and suggested based on this that \eqref{eq:variance_asymptotic} will hold for $H \le X^{1-\varepsilon}$. The best known result is \cite{GMRR}, where it is shown that \eqref{eq:variance_asymptotic} holds for $H \le X^{6/11-\varepsilon}$ unconditionally and $H \le X^{2/3-\varepsilon}$ on the Lindel\"of Hypothesis. In fact in \cite{GMRR} it is shown that even an upper bound of order $\sqrt{H}$ for $H \le X^{1-\varepsilon}$ for all $\varepsilon > 0$ would already imply the Riemann Hypothesis.

Because $N_S(n,H)$ is on average of order $H$, one might naively have expected the variance to also be of order $H$. That the variance is of order $\sqrt{H}$ speaks to the fact that the squarefrees are a rather rigid sequence. This can be discerned even visually in comparison, for instance, to the primes (Figure ~\ref{fig:primes_squarefrees}) and we will return to give a more exact description of it in Section \ref{subsec:fBm}.

In \cite{Hall2} Hall\footnote{Note that our definition of $M_k$ differs slightly from \cite{Hall2}. Hall does not normalize by the factor $1/X$.} studied higher moments of counts of squarefrees in short intervals
\begin{equation}
M_{k}(X,H)=\frac{1}{X} \sum_{n\le X} \left(N_S(n,H) - \frac{H}{\zeta(2)} \right)^k,
\end{equation}
where $k$ is a positive integer, proving the upper bound $\lim_{X\rightarrow\infty} M_k(X,H) \ll_k H^{(k-1)/2}.$ Various authors have asked whether this can be refined, with the most recent result, 
\[ M_k(X,H) \ll_\varepsilon H^{k/4+\varepsilon},\]
for any $\varepsilon > 0$ as long as $H \le X^{4/(9k) - \varepsilon}$, being due to Nunes \cite{Nunes1}. For $H$ in the range considered, this is an optimal upper bound up to the factor of $H^\varepsilon$. In \cite{Avdeeva2016} extensive numerical evidence is presented that suggests that these moments are in fact Gaussian.

Our first main result confirms this conjecture.
\begin{thm} \label{thm:GaussianMoments}
For $1 \leq H \le X$, as $X\rightarrow\infty$,
\[M_k(X,H)  = \mu_k (A^{\frac{1}{2}} H^{\frac{1}{4}})^k + O_k(H^{\frac{k}{4}-\frac{c}{k}} + H^k X^{-\frac{1}{3} + o(1)}),\]
for every positive integer $k$, where $\mu_k = 1\cdot 3 \cdots (k-1)$ if $k$ is even and $\mu_k = 0$ if $k$ is odd. Here $c > 0$ is an explicit constant.
\end{thm}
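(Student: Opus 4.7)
The natural approach is the method of moments: showing that, for each fixed $k$, $M_k(X,H)$ matches the $k$-th moment of a Gaussian of variance $A\sqrt{H}$ forces the claimed asymptotic (and, taking $H = X^{o(1)}$, the Gaussian limit). As a starting point I would use the identities $\mathbf{1}_{\mathrm{squarefree}}(m) = \sum_{d^2 \mid m}\mu(d)$ and $1/\zeta(2) = \sum_d \mu(d)/d^2$ to write
\[\Delta(n) := N_S(n,H) - \frac{H}{\zeta(2)} = \sum_{d \le \sqrt{X+H}} \mu(d)\, \rho_d(n), \qquad \rho_d(n) = \{n/d^2\} - \{(n+H)/d^2\},\]
representing $\Delta(n)$ as a superposition of bounded fractional-part oscillations at the scales $d^2$.

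Expanding the $k$-th moment as
\[M_k(X,H) = \sum_{d_1,\ldots,d_k} \mu(d_1)\cdots\mu(d_k) \cdot \frac{1}{X}\sum_{n\le X} \rho_{d_1}(n)\cdots\rho_{d_k}(n),\]
I would Fourier-expand each $\rho_d$ in its sawtooth series, producing multiple exponential sums whose phases $\sum_j h_j n/d_j^2$ have $n$-averages that force the near-integer relation $\sum_j h_j/d_j^2 \in \Z$. The dominant contribution arises from configurations where $\{1,\ldots,k\}$ is perfectly matched into pairs (so $k$ is even), with $d_{i_1}=d_{i_2}$ and $h_{i_1}+h_{i_2}=0$ inside each pair. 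Each pair then contributes the Hall variance $A\sqrt{H}$ from \eqref{eq:variance_asymptotic}, and summing over the $(k-1)!! = \mu_k$ perfect matchings yields the main term $\mu_k A^{k/2} H^{k/4}$; for odd $k$ no such matching exists and the main term vanishes, matching the vanishing of odd Gaussian moments.

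The crux is to obtain a genuine power saving $H^{-c/k}$ over the main term for off-diagonal (non-matching) configurations. One must rule out both partial pairings (e.g.\ three indices sharing a common $d$) and fully unaligned tuples whose exponential sums cancel only after delicate cancellation; the bookkeeping grows combinatorially intricate with $k$. I would classify $k$-tuples by the graph of shared $d_i$-values (and more generally of $\lcm$-relations among sub-blocks), bound each block type by divisor-function estimates, and use M\"obius orthogonality to suppress the non-diagonal blocks. This is where the bulk of the technical effort goes and where the $H^{k/4-c/k}$ error arises. Finally, the secondary error $H^k X^{-1/3+o(1)}$ should come from the range $d \asymp \sqrt{X+H}$, where equidistribution of multiples of $d^2$ degrades; this range can be handled via the trivial bound $|\Delta(n)| \le H$ on a sparse exceptional set, controlled by bounds on the density of squarefull integers in the spirit of Filaseta--Trifonov. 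The modularity of this argument is what enables the generalization to $B$-free integers advertised in the abstract.
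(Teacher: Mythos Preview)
Your overall architecture matches the paper's: write $\Delta(n)$ via the convolution identity, Fourier-expand the fractional parts, reduce to counting solutions of $\sum_j \ell_j/d_j \equiv 0 \bmod 1$, extract the diagonal (paired) terms to produce $\mu_k (A\sqrt{H})^{k/2}$, and bound the off-diagonal terms. The $H^k X^{-1/3+o(1)}$ error also arises essentially as you say, from the passage from $M_k(X,H)$ to a limiting expression $C_k(H)$; the paper handles this via an inductive truncation lemma (their Lemma~2.4) rather than Filaseta--Trifonov, but this part is not where the difficulty lies.

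The genuine gap is in your treatment of the off-diagonal terms. You propose to ``classify $k$-tuples by the graph of shared $d_i$-values\ldots bound each block type by divisor-function estimates, and use M\"obius orthogonality''. This is essentially what Hall did, and it yields only $M_k \ll H^{(k-1)/2}$, far too weak to isolate a main term of size $H^{k/4}$. The paper needs three specific analytic inputs that your plan does not supply: (i) the Montgomery--Vaughan Fundamental Lemma and its refined form (their Lemma~\ref{lem:lem7gen}, which is \cite[Lemma~7]{MV}) to convert the congruence condition into $L^2$-type bounds; (ii) Nunes's observation that one may restrict to $d_i > H^{1-o(1)}$ and $[d_1,\ldots,d_k] > H^{k/2-o(1)}$ at negligible cost; and (iii) a new averaging argument, using the P\'olya--Vinogradov inequality, to control the term $T_4$ appearing in the refined Fundamental Lemma. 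The paper explicitly remarks (Remark~\ref{rem:alphaHalf}) that the direct Montgomery--Vaughan treatment of $T_4$ gives savings of shape $H^{1/2-\alpha+o(1)}$, which vanishes at $\alpha = 1/2$---precisely the squarefree case---so ingredient (iii) is not optional. Your proposal as written would stall at Hall's bound and not reach the $H^{k/4 - c/k}$ error.
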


Thus if $H\rightarrow\infty$ and $H \le X^{4/(9k)-\varepsilon}$ for some $\varepsilon > 0$, we have \[M_k(X,H) = (\mu_k + o(1)) (A^{\frac{1}{2}} H^{\frac{1}{4}})^k\text{ as }X\rightarrow\infty.\] Note that the main term is the $k$-th moment of a centered Gaussian random variable with variance $A\sqrt{H}$.

If $H = X^{o(1)}$, then for any fixed $k$ we have that $H$ satisfies $H \leq X^{4/(9k)-\varepsilon}$ for some $\varepsilon > 0$ for sufficiently large $X$. Hence by the moment method (see \cite[Section 30]{Billingsley1995}), we obtain the following result.
\begin{thm}\label{thm:sqDistGauss}
Let $H = H(X)$ satisfy \[H\ra \infty, \text{  yet  }\,\frac{\log H}{\log X} \ra 0\text{ as }X \ra \infty.\]
Then, for any $z \in \mb{R}$,
\[\lim_{X \ra \infty} \frac{1}{X}\left|\left\{n \leq X : \frac{N_S(n,H) - \frac{H}{\zeta(2)}}{A^{\frac{1}{2}}H^{\frac{1}{4}}} \geq  z\right\}\right| = \frac{1}{\sqrt{2\pi}} \int_z^{\infty} e^{-\frac{t^2}{2}} dt.\]
\end{thm}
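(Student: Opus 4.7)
The plan is to derive Theorem~\ref{thm:sqDistGauss} from Theorem~\ref{thm:GaussianMoments} via the classical method of moments. For each $X$, regard the normalized count
\[Y_X = \frac{N_S(n,H) - H/\zeta(2)}{A^{1/2}H^{1/4}}\]
as a random variable on the probability space where $n$ is drawn uniformly from $\{1,\dots,\lfloor X\rfloor\}$. The $k$-th moment of $Y_X$ is exactly $M_k(X,H)/(A^{1/2}H^{1/4})^k$, and the goal reduces to showing that $Y_X$ converges in distribution to a standard normal variable $Z$ as $X \to \infty$.

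First I would verify convergence of each fixed moment. Dividing the conclusion of Theorem~\ref{thm:GaussianMoments} through by $(A^{1/2}H^{1/4})^k = A^{k/2}H^{k/4}$ gives
\[\E[Y_X^k] = \mu_k + O_k\bigl(H^{-c/k} + H^{3k/4}X^{-1/3+o(1)}\bigr).\]
Under the hypothesis $\log H/\log X \to 0$ combined with $H \to \infty$, for each fixed $k$ and all sufficiently large $X$ we have $H \le X^{4/(9k)-\varepsilon}$ for some $\varepsilon = \varepsilon(k) > 0$; hence the second error term is $o(1)$, while the first is $o(1)$ because $H \to \infty$. Therefore $\E[Y_X^k] \to \mu_k$ for every positive integer $k$, and these are precisely the moments of $Z \sim N(0,1)$.

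Then I would invoke the method of moments. Since the standard normal distribution is uniquely determined by its moments---verifiable for instance via Carleman's condition $\sum_k \mu_{2k}^{-1/(2k)} = \infty$---convergence of all moments to those of $Z$ implies convergence in distribution by \cite[Theorem~30.2]{Billingsley1995}. Because the normal cumulative distribution function is continuous on $\R$, convergence in distribution is equivalent to pointwise convergence of tail probabilities at every $z \in \R$, which yields exactly the formula stated in Theorem~\ref{thm:sqDistGauss}.

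All of the analytic difficulty is absorbed into Theorem~\ref{thm:GaussianMoments}; once the moment asymptotics are established, the passage to the distributional limit is a routine invocation of the moment method, and I expect no additional obstacles at this step beyond checking that the ranges of $H$ in the hypothesis of the moment theorem are indeed compatible with $H = X^{o(1)}$, which they are for every fixed $k$.
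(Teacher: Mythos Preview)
Your proposal is correct and follows exactly the same approach as the paper, which derives Theorem~\ref{thm:sqDistGauss} from Theorem~\ref{thm:GaussianMoments} by the method of moments, citing \cite[Section~30]{Billingsley1995}. Your write-up is in fact more detailed than the paper's own one-line justification, and all the steps (the division to normalize, the verification that $H = X^{o(1)}$ forces $H \le X^{4/(9k)-\varepsilon}$ for each fixed $k$, and the appeal to determinacy of the normal distribution by its moments) are handled correctly.
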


That is, the centered, normalized counts tend in distribution to a Gaussian random variable.

Gaussian limit theorems are known for the sums over short intervals of several important arithmetic functions (for instance divisor functions $d_k$ (see \cite{Lester2016}) and the sums-of-squares representation function $r$ (see \cite{Hughes2004})), and Gaussian limit theorems for counts of primes in short intervals are known under the assumption of strong versions of the Hardy--Littlewood conjectures \cite{MS}, but Theorem \ref{thm:sqDistGauss} seems to be the first instance of an \emph{unconditional} proof of Gaussian behavior for short interval counts of a non-trivial, natural number theoretic sequence.

\begin{figure}
\subcaptionbox{A short interval near $100000$ containing $125$ primes.}
{\scalebox{.37}{\includegraphics{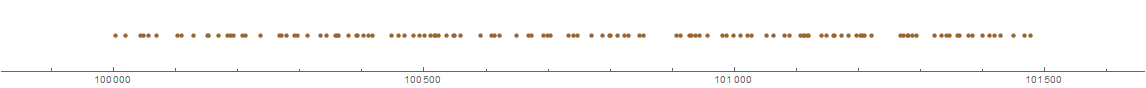}}}

\subcaptionbox{A short interval near $100000$ containing $125$ squarefrees.}
{\scalebox{.37}{\includegraphics{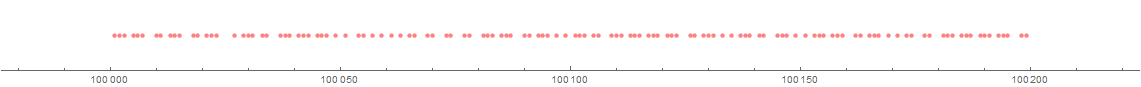}}}

\caption{A comparison between a short interval containing 125 primes with a short interval containing 125 squarefrees. The relative paucity of gaps and clusters of squarefrees is indicative of the rigidity of their distribution.} 
\label{fig:primes_squarefrees}
\end{figure}

Hall in \cite{Hall2} asked also about the order of magnitude of the absolute moments
\[ M_\lambda^+(X;H) = \frac{1}{X} \sum_{n \leq X} \left| N_S(n,H) - \frac{H}{\zeta(2)}\right|^\lambda,\]
and as a standard corollary of Theorems \ref{thm:GaussianMoments} and \ref{thm:sqDistGauss} we obtain an asymptotic formula for these.

\begin{cor}\label{cor:absMoments}
For fixed $\lambda > 0$, let $H = H(X)$ satisfy \[H \ra \infty,\text{ yet }\, \,\frac{\log H}{\log X} \ra 0\text{ as }X \ra \infty.\]
Then
\[ M_\lambda^+(X,H) \sim \frac{2^{\frac{\lambda}{2}}}{\sqrt{\pi}} \Gamma\left(\frac{\lambda+1}{2}\right) (A \sqrt{H})^{\frac{\lambda}{2}}.\]
\end{cor}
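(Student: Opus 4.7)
My plan is the standard moment-to-absolute-moment bootstrap: combine the distributional limit in Theorem \ref{thm:sqDistGauss} with uniform integrability deduced from the integer-moment bounds in Theorem \ref{thm:GaussianMoments}. Let $\xi_X$ denote the normalized centered count $(N_S(n,H)-H/\zeta(2))/(A^{1/2}H^{1/4})$ with $n$ drawn uniformly from $\{1,\ldots,X\}$. Observe that the proposed limit is exactly $(A^{1/2}H^{1/4})^\lambda \cdot \E|Z|^\lambda$ where $Z \sim N(0,1)$, since $\E|Z|^\lambda = \frac{2^{\lambda/2}}{\sqrt{\pi}}\Gamma(\frac{\lambda+1}{2})$. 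Thus, after dividing by $(A^{1/2}H^{1/4})^\lambda$, the corollary reduces to showing that $\E |\xi_X|^\lambda \to \E|Z|^\lambda$ as $X \to \infty$.

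For the first ingredient, Theorem \ref{thm:sqDistGauss} gives $\xi_X \Rightarrow Z$ in distribution, and the continuous mapping theorem then yields $|\xi_X|^\lambda \Rightarrow |Z|^\lambda$. For the second ingredient, I would fix an even integer $k$ with $k > \lambda$. Since by hypothesis $H = X^{o(1)}$, for all sufficiently large $X$ we have $H \le X^{4/(9k) - \varepsilon}$ for some $\varepsilon > 0$, so Theorem \ref{thm:GaussianMoments} produces $\E \xi_X^k = \mu_k + o_X(1)$. As $k$ is even, $\xi_X^k = |\xi_X|^k$, so $\sup_X \E|\xi_X|^k < \infty$. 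Because $k > \lambda$, this uniform $L^k$ bound upgrades the family $\{|\xi_X|^\lambda\}$ to being uniformly integrable (by the usual truncation estimate $\E(|\xi_X|^\lambda \mbf{1}_{|\xi_X|^\lambda > T}) \le T^{1-k/\lambda}\E|\xi_X|^k$).

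Convergence in distribution together with uniform integrability (see e.g.\ \cite[Theorem~25.12]{Billingsley1995}) then implies $\E|\xi_X|^\lambda \to \E|Z|^\lambda$, and multiplying back by $(A^{1/2}H^{1/4})^\lambda$ delivers the claimed asymptotic. There is no real obstacle here, as every ingredient is already in place; the only point to watch is the simultaneous choice of $k$ even and $k > \lambda$, which guarantees both that Theorem \ref{thm:GaussianMoments} supplies a bound on $\E|\xi_X|^k$ directly and that the resulting moment controls the $\lambda$-th absolute moment.
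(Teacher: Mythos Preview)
Your proposal is correct and follows essentially the same route as the paper's own proof: both normalize to a random variable converging in distribution to a standard Gaussian (via Theorem~\ref{thm:sqDistGauss}), invoke Theorem~\ref{thm:GaussianMoments} at an even integer exceeding $\lambda$ to obtain a uniform higher-moment bound, and then appeal to \cite[Theorem~25.12]{Billingsley1995} to pass the absolute $\lambda$-th moment through the limit. Your write-up is in fact slightly cleaner in that you work directly with $|\xi_X|^\lambda$ and spell out the uniform integrability step explicitly.
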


\begin{proof}
We give a quick derivation in the language of probability. By \cite[Theorem 25.12]{Billingsley1995} and the subsequent corollary, if $Y_j$ is a sequence of random variables tending in distribution to a random variable $Y$ and $\sup_j \E\, |Y_j|^{1+\varepsilon} < +\infty$ for some $\varepsilon > 0$, then $\E\,Y_j \ra \E\, Y$. Having chosen the function $H = H(X)$, for each $X$ let $n \in [1,X]$ be chosen randomly and uniformly and define the random variables $\Delta_X = |(N_S(n,H) - H/\zeta(2))/(A^{1/2}H^{1/4})|^\lambda$. Then as $X\rightarrow\infty$, we have that $\Delta_X$ tends in distribution to $|G|^\lambda$ for $G$ a standard normal random variable, by Theorem \ref{thm:sqDistGauss}. Moreover, Theorem \ref{thm:GaussianMoments} implies for any even integer $2\ell > \lambda$ that $\limsup_{X \ra \infty} \E |\Delta_X|^{2\ell} < +\infty$. Thus the result follows by computing $\E\, |G|^\lambda$ via calculus.
\end{proof}

\begin{rem}\label{rem:AP}
For a given result relating to the behavior of an arithmetic function in short intervals, it is natural to consider the analogous problem in a short arithmetic progression. For example, in analogy to the quantity $N_S(n,H)$ for $n \in [1,x]$ and $H = H(x)$ slowly growing, one might consider the quantity 
\[ N_S(x;q,a) := |\{n \leq x : n \in S, n \equiv a \bmod{q}\}|,\]
where $1 \leq q \leq x$ is chosen so large that $x/q$ (which corresponds in this context to $H$) is only slowly growing, and $a \bmod{q}$ is a specified residue class. When $a$ is coprime to $q$ the expected size of $N_S(x;q,a)$ is \[\frac{1}{\zeta(2)}\frac{x}{q} \prod_{p|q}\left(1-\frac{1}{p^2}\right),\] and one can define the analogous moments
\[\tilde{M}_k(x;q) := \frac{1}{\phi(q)} \asum_{a \bmod{q}} \left(N_S(x;q,a) - \frac{x}{q\zeta(2)} \prod_{p|q}\left(1-\frac{1}{p^2}\right)\right)^k \text{ for $k \geq 2$}.\]
As noted by Nunes \cite[Sections 1.2, 3.2]{Nunes1}, one may reduce the estimation of $\tilde{M}_k(x;q)$ to a quantity that is very similar to what is obtained in the course of estimating $M_k(x,H)$, making the analysis of the problem for arithmetic progressions nearly identical to that of short intervals. As such, one could very similarly obtain an arithmetic progression analogue of the Gaussian limit theorem Theorem \ref{thm:sqDistGauss}, if desired. However, unlike the short interval problem the problem in progressions does not seem to admit a nice interpretation in the language of fractional Brownian motion (see Section \ref{subsec:fBm}). We have therefore chosen to focus on the short interval problem in this paper in order to avoid making this paper even longer.
\end{rem}

\subsection{B-frees}

It is natural to write our proofs in the more general setting of $B$-free numbers. We recall their definition shortly, but first we fix some notation.

For a sequence $J$ of natural numbers we will write $\mbf{1}_J$ for the indicator function of $J$, and
\[N_J(x) = \sum_{n\leq x} \mbf{1}_J(n)\]
for the count of elements of $J$ no more than $x$.

\begin{def1}\label{def:index}
We say that a sequence $J$ is of \emph{index} $\alpha \in [0,1]$ if $N_J(x) = x^{\alpha+o(1)}$.
\end{def1}

\begin{def1}\label{def:varying}
A measurable function $L$ defined, finite, positive, and measurable on $[K,\infty)$ for some $K \geq 0$ is said to be \emph{slowly varying} if for all $a > 0$,
\[ \lim_{x\ra \infty} \frac{L(ax)}{L(x)} = 1.\]

A sequence $J \subset \N$ is said to be \emph{regularly varying} if $N_J(x) \sim x^\alpha L(x)$ for some $\alpha \in [0,1]$ and some slowly varying function $L$.
\end{def1}

For instance the function $L(x) = (\log x)^j$ is slowly varying, for any $j \in \R$. For any slowly varying function $L$ it is necessary that $L(x) = x^{o(1)}$, but this condition is not sufficient. Clearly in the definition above $\alpha$ will be the index of the regularly varying sequence $J$. Further information on regularly varying sequences can be found in \cite[Chapter 4.1]{PolyaSzego1}

Fix a non-empty subset $B \subset \N_{>1}$ of pairwise coprime integers with $\sum_{b \in B} 1/b < \infty$; we call such a set a \emph{sieving set}. We say that a positive integer is \emph{$B$-free} if it is indivisible by every element of $B$. 
For instance if $B=\{p^2:\; p \text{ prime}\}$, $B$-frees are nothing but squarefrees. Another studied example is $B=\{p^m: \; p \text{ prime}\}$ for some $m \ge 3$, for which $B$-frees are the $m$-th-power free numbers, i.e., integers indivisible by an $m$-th power of a prime. The notion of $B$-frees was introduced by Erd\H{o}s \cite{Erdos1966}, who was motivated by Roth's work \cite{Roth1951} on gaps between squarefrees. (See also \cite{Brudern2009,Jancevskis2009} for the closely related notion of \emph{convergent sieves}. We also mention that \cite{Jancevskis2009} upper bounded the quantity $\tilde{M}_2(x;q)$ mentioned in Remark~\ref{rem:AP}.)

We write $\bfree \colon \N \to \C$ for the indicator function of $B$-free integers, and $\Nbfree(x)$ for the number of $B$-free integers $n\leq x$. For the sets $B$ we are considering here, it is known (see e.g., \cite[Theorem 4.1]{ElAbdalaoui2015}) that
\begin{equation}\label{eq:meanvalue}
\Nbfree(x) \sim \meanb x, \quad \text{for}\quad \meanb = \prod_{b \in B} \left(1-\frac{1}{b}\right) \in (0,1),
\end{equation}
so that $B$-frees have asymptotic density $\meanb$.

We write $\sgb$ for the multiplicative semigroup generated by $B$, that is the set of positive integers that can be written as a product of (possibly repeated) elements of $B$. By convention $1 \in \sgb$, as $1$ arises from the empty product. (For instance if $B=\{p^2:\; p \text{ prime}\}$, then $\sgb = \{n^2:\; n \in \N\}$.)

We introduce an arithmetic function $\mobb \colon \N \to \C$, analogous to the M\"{o}bius function $\mu$, defined by
\[ \mobb(n) = \begin{cases} 0 & \text{if }n \notin \sgb \text{ or if there exists }b \in B \text{ with } b^2 \mid n,\\ (-1)^k & \text{if }n = b_1 b_2 \cdots b_k, \, b_1<b_2<\ldots<b_k, \, b_j \in B.\end{cases}\]
Observe that $\mobb$ and $\bfree$ are multiplicative and relate via the multiplicative convolution \begin{equation}\label{eq:dirichconv}
\bfree = \mobb \ast \mathbf{1}.
\end{equation}
We denote by $\sgbsf \subset \sgb$ the subset of $\sgb$ of elements $n$ satisfying $\mobb^2(n)=1$, i.e., those $n \in \sgb$ such that no $b \in B$ divides $n$ twice. (For instance, if $B = \{p^2:\; p\; \textrm{prime}\}$, then we have $\sgbsf = \{n^2:\; n \; \textrm{squarefree}\}$.) We will often use without mention the fact that both $\sgbsf$ and $\sgb$ are closed under $\gcd$ and $\lcm$, or equivalently, they are sublattices of the positive integers with respect to these two operations.

\subsubsection{Variance and Moments}

Let $\Nbfree(n,H) = \Nbfree(n+H)-\Nbfree(n)$ be the count of $B$-frees in an interval $(n,n+H]$. We consider the moments
\[ M_{k,B}(X,H)=\frac{1}{X} \sum_{n\le X} \left(\Nbfree(n,H) - \meanb H \right)^k.\]

\begin{prop}\label{prop:limiting_moments}
If the sequence $\sgb$ has index $\alpha \in (0,1)$, then for each fixed positive integer $k$,
\[ C_{k,B}(H) := \lim_{X\ra \infty} M_{k,B}(X,H), \]
exists and moreover for any $\e > 0$ we have
\begin{align}
M_{k,B}(X,H) = C_{k,B}(H) + O_k(H^k X^{\tfrac{\alpha-1}{\alpha+1} + \e} + 1).
\end{align}
\end{prop}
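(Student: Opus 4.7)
The plan is to expand $f(n) := \Nbfree(n,H) - \meanb H$ via the identity $\bfree = \mobb \ast \mathbf{1}$:
\[
f(n) = \sum_{d \in \sgbsf} \mobb(d)\, g_d(n), \qquad g_d(n) := \left\lfloor \frac{n+H}{d} \right\rfloor - \left\lfloor \frac{n}{d} \right\rfloor - \frac{H}{d}.
\]
Each $g_d$ is $d$-periodic and mean zero, bounded by $1$, with $d^{-1}\sum_{n \bmod d} g_d(n)^2 = \{H/d\}(1-\{H/d\}) \ll \min(1,H/d)$. Raising to the $k$-th power and averaging yields
\[
M_{k,B}(X,H) = \sum_{\vec{d} \in \sgbsf^k} \prod_{i=1}^{k} \mobb(d_i) \cdot \overline{A}_{\vec{d},X}, \quad \overline{A}_{\vec{d},X} := \frac{1}{X}\sum_{n \leq X}\prod_{i=1}^{k} g_{d_i}(n).
\]
For each tuple the integrand has period $q_{\vec{d}} := \lcm(\vec{d})$, so $\overline{A}_{\vec{d},X} = \overline{A}_{\vec{d}} + O(q_{\vec{d}}/X)$ where $\overline{A}_{\vec{d}}$ is the full-period average; the natural candidate limit is $C_{k,B}(H) := \sum_{\vec{d}}\prod_i \mobb(d_i)\, \overline{A}_{\vec{d}}$.

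Two structural facts keep the series tractable. First, by the Chinese remainder theorem and the mean-zero property, $\overline{A}_{\vec{d}}$ vanishes whenever $\{1,\ldots,k\}$ admits a partition $I \sqcup J$ with $\gcd(\lcm_{i \in I} d_i, \lcm_{j \in J} d_j) = 1$; only tuples whose $B$-factorizations form a connected hypergraph contribute. Second, the elementary identity $\sum_{j=0}^{m-1} g_{cm}(s+cj) = g_c(s)$ collapses residual coordinates onto the shared core: when $d_i = c m_i$ with $c, m_1, \ldots, m_k \in \sgbsf$ pairwise coprime,
\[
\overline{A}_{cm_1, \ldots, cm_k} = \frac{1}{m_1 \cdots m_k} \cdot \frac{1}{c}\sum_{s \bmod c} g_c(s)^k,
\]
with analogous ``core-times-residual'' factorizations on each connected hypergraph pattern. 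Combined with $d^{-1}\sum_n g_d(n)^2 \ll \min(1,H/d)$ and $\sum_{m \in \sgbsf} 1/m = \prod_{b \in B}(1+1/b) < \infty$, this establishes absolute convergence of $C_{k,B}(H)$.

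For the quantitative error I would truncate at a parameter $Y$. Tuples with all $d_i \leq Y$ contribute at most $N_{\sgbsf}(Y)^k \cdot Y^k / X = Y^{k(\alpha+1)+o(1)}/X$ from the period approximation, by the index-$\alpha$ hypothesis. The tail ($\max_i d_i > Y$) is handled via the core-residual factorization combined with partial summation against $N_{\sgbsf}(x) = x^{\alpha+o(1)}$: connectedness forces each large coordinate to share an element of $B$ with another, collapsing a dimension and yielding a power-saving in $Y$. Balancing the two errors by taking $Y$ as a suitable power of $H$ and $X$ produces $O(H^k X^{(\alpha-1)/(\alpha+1)+\e})$; the additive $+1$ absorbs the bounded-$H$ regime where $|f| = O(1)$.

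The main obstacle will be the tail estimate. A direct H\"older bound $|\overline{A}_{\vec{d}}| \leq \prod_i(H/d_i)^{1/k}$ fails to converge absolutely once $k > 1/\alpha$, so one cannot afford to discard the structure of $\vec{d}$. The fix is to exploit connectedness together with the explicit core-residual decomposition to reduce the tail sum to a lower-dimensional sum over cores in $\sgbsf$, after which the residual factors telescope via $\prod_b(1+1/b) < \infty$. The combinatorial bookkeeping over all connected hypergraph patterns on $\{1,\ldots,k\}$ is intricate but parallels---and generalizes---the variance analyses in~\cite{Hall1, GMRR}.
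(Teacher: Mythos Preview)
Your approach diverges from the paper's at the technical core, and there is a genuine error in your structural claim. The assertion that $\overline{A}_{\vec d}$ vanishes whenever $\{1,\dots,k\}$ splits as $I\sqcup J$ with $\gcd(\lcm_{i\in I}d_i,\lcm_{j\in J}d_j)=1$ is false: take $k=4$ with $d_1=d_2=a$, $d_3=d_4=b$, $(a,b)=1$. CRT gives $\overline{A}_{\vec d}=\overline{A}_{a,a}\cdot\overline{A}_{b,b}$, a product of two nonzero variances. What CRT and mean-zero actually buy you is only the weaker statement that $\overline{A}_{\vec d}=0$ when some $b\in B$ divides \emph{exactly one} $d_i$ (the hypothesis of the Fundamental Lemma~\ref{lem:flgen}). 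Your core--residual identity is correct for the particular shape $d_i=cm_i$, but the ``analogous factorizations on each connected pattern'' are not spelled out, and since disconnected tuples survive, they do not obviously assemble into an absolutely convergent sum.

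Even granting absolute convergence of $C_{k,B}(H)$, two quantitative ingredients are missing. First, your tail bound needs pointwise control of $|\overline{A}_{\vec d}|$ in terms of $[d_1,\dots,d_k]$; the paper obtains $|\overline{A}_{\vec d}|\ll H^{k/2}/[d_1,\dots,d_k]$ via the Fourier expansion and the Fundamental Lemma, which is exactly what makes $\sum_{[d_i]>z}|\overline{A}_{\vec d}|\ll H^{k/2}z^{\alpha-1+o(1)}$. A real-space H\"older or core--residual argument does not obviously produce this $1/[d_1,\dots,d_k]$ decay. Second, for tuples with large $d_i$ you must also bound the \emph{partial} average $\overline{A}_{\vec d,X}$, not just the period average; the paper handles this by an inductive estimate of Nunes type (Lemma~\ref{lem:corrlarged}), which you do not mention. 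Finally, your truncation at $\max_i d_i\le Y$ gives a period-error $Y^{k(1+\alpha)+o(1)}/X$, whereas truncating at $[d_1,\dots,d_k]\le z$ (as the paper does) gives the sharper $z^{1+\alpha+o(1)}/X$ via $\tau_{B,\lcm,k}(d)\ll d^{o(1)}$; this sharper form is what balances against the tail to yield the exponent $\tfrac{\alpha-1}{\alpha+1}$.
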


We will describe an explicit formula for $C_{k,B}(H)$ later.

When $k = 2$ we have the following general result for the variance, building on ideas of Hausman and Shapiro \cite{Hausman} and Montgomery and Vaughan \cite{MV},

\begin{prop}\label{prop:index_variance}
If the sequence $\sgb$ is of index $\alpha \in (0,1)$, then
$C_{2,B}(H) = H^{\alpha+o(1)}$.
\end{prop}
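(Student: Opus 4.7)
My plan is to derive a manifestly non-negative closed-form for $C_{2,B}(H)$ via the convolution identity \eqref{eq:dirichconv}, then bound it above and below by $H^{\alpha+o(1)}$ using the index hypothesis on $\sgb$. Starting from
\[ \Nbfree(n, H) - \meanb H = \sum_{\substack{d \in \sgbsf \\ d \ge 2}} \mobb(d)\,\psi_d(n, H), \qquad \psi_d(n, H) := |\{m \in (n, n+H] : d \mid m\}| - H/d, \]
I would square and average in $n$. A short application of the Chinese Remainder Theorem to the joint distribution of the detectors $\mathbf{1}_{d_i \mid n}$ yields the clean covariance
\[ \tau(d_1, d_2, H) := \lim_{X \to \infty} \frac{1}{X}\sum_{n \le X}\psi_{d_1}(n, H)\psi_{d_2}(n, H) = \frac{r_g(g - r_g)}{d_1 d_2}, \qquad g = \gcd(d_1, d_2),\ r_g = H \bmod g, \]
which vanishes when $g = 1$.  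Grouping the double sum by $g \in \sgbsf$ and writing $d_i = g a_i$ with $a_1, a_2 \in \sgbsf$ mutually coprime and coprime to $g$, the inner sum factors as an Euler product, producing the clean formula
\[ C_{2, B}(H) = \sum_{\substack{g \in \sgbsf \\ g \ge 2}} F(g)\cdot\frac{r_g(g - r_g)}{g^2}, \qquad F(g) := \prod_{\substack{b \in B\\b \nmid g}}\left(1 - \frac{2}{b}\right) \ge 0. \]
When $2 \notin B$, we have $F(g) \ge F_0 := \prod_{b \in B}(1 - 2/b) > 0$ uniformly (the case $2 \in B$ is handled identically after restricting to even $g$).

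For the upper bound, I would split at $g = H$: using $r_g(g-r_g)/g^2 \le 1/4$ for $g \le H$ and $\le H/g$ for $g > H$ together with $F(g) \le 1$ and $N_{\sgbsf}(x) \le N_{\sgb}(x) = x^{\alpha + o(1)}$, both pieces are $\ll H^{\alpha + o(1)}$ (the second via partial summation, which converges since $\alpha < 1$).

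For the lower bound, non-negativity of every term allows restricting to $g \ge 2H$, where $r_g = H$ and $r_g(g - r_g)/g^2 \ge H/(2g)$, giving
\[ C_{2, B}(H) \gg H\sum_{\substack{g \in \sgbsf \\ g \ge 2H}} \frac{1}{g}. \]
To conclude, one needs to show that this tail is $\gg H^{\alpha - 1 + o(1)}$. For this I would exploit the Dirichlet series identity $\zeta_{\sgb}(s) = \zeta_{\sgbsf}(s) \cdot \zeta_{\sgb}(2s)$, arising from the unique decomposition $n = a b^2$ with $a \in \sgbsf$ and $b \in \sgb$ of each $n \in \sgb$. Since $\zeta_{\sgb}(2\alpha) < \infty$, this identity pins the abscissa of convergence of $\zeta_{\sgbsf}$ at $\alpha$, and a Tauberian-style comparison on the ranges $(X, X^{1 + \e}]$---in which $N_{\sgbsf}(X^{1 + \e})$ eventually dominates $N_{\sgbsf}(X)$ by $\gg X^{(1 + \e)\alpha - o(1)}$---delivers the tail bound we need.

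The main obstacle is this last Tauberian transfer: the bare index hypothesis (weaker than regular variation) on $\sgb$ must be promoted to a useable lower bound on the truncated sum over $\sgbsf$, so one has to track the $o(1)$ corrections carefully lest the desired lower bound be lost. The Euler-product factorization for $F(g)$ and the degenerate case $2 \in B$ are routine in comparison.
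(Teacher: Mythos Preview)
Your proposal is correct and takes a genuinely different route from the paper's. The paper proves the more general Lemma~\ref{lem:estimate c2rho} using the Fourier representation from Proposition~\ref{prop:general_limiting_moments},
\[
C_2(H;\vp)=\sum_{\substack{r>1\\ r\in\sgbsf}} g(r)^2\sum_{\sigma\in\mc{R}_B(r)}|\Phi_H(\sigma)|^2,
\]
bounding above via Lemma~\ref{lem:FHmoments} and below (in the case $\int\vp\neq 0$) by keeping only the single frequency $\sigma=1/r$ with $r\gg H$. You instead compute the covariance $\tau(d_1,d_2,H)=r_g(g-r_g)/(d_1d_2)$ directly by CRT and collapse the double sum to the clean single sum $\sum_{g\in\sgbsf}F(g)\,r_g(g-r_g)/g^2$. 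This is more elementary (no exponential sums) and gives a pleasant closed form; the paper's route has the advantage of working verbatim for general weights $\vp$, which is needed later for the fractional Brownian motion result.

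Both approaches ultimately reduce the lower bound to showing that a tail sum over $\sgbsf$ (in your case $\sum_{g\ge 2H,\,g\in\sgbsf}1/g$, in the paper's case $\sum_{r\gg H,\,r\in\sgbsf}1/r^2$) is $\gg H^{\alpha-1+o(1)}$ (resp.\ $H^{\alpha-2+o(1)}$), and hence both need that $\sgbsf$ itself has index $\alpha$. This is Lemma~\ref{lem:index_sgb_to_sgbsf} in the paper, proved from the counting identity $N_{\sgb}(x)=\sum_{b\in\sgb}N_{\sgbsf}(x/b^2)$, which is exactly the counting-function shadow of your Dirichlet series factorization $\zeta_{\sgb}(s)=\zeta_{\sgbsf}(s)\zeta_{\sgb}(2s)$. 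One caution: knowing only that the abscissa of convergence of $\zeta_{\sgbsf}$ equals $\alpha$ gives $\limsup(\log N_{\sgbsf}(x))/\log x=\alpha$, not the $\liminf$ bound you actually need for the $(X,X^{1+\e}]$ comparison. You must use the convolution identity at the level of counting functions (as the paper does) rather than only at the level of abscissae; your sketch is compatible with this, but be sure to make that step explicit.
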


If $\sgb$ is in addition a regularly varying sequence then we prove an asymptotic formula for $C_{2,B}(H)$.

\begin{prop}\label{prop:regularlyvarying_variance}
If the sequence $\sgb$ is regularly varying with index $\alpha \in (0,1)$, then
\[ C_{2,B}(H) \sim A_\alpha N_{\sgb}(H),\]
where
\begin{equation}\label{eq:eulerprod_var}
A_\alpha := \zeta(2-\alpha) \gamma(\alpha) \prod_{b\in B} \left(1 - \frac{2}{b} + \frac{2}{b^{1+\alpha}} - \frac{1}{b^{2\alpha}}\right) \quad \text{with} \quad \gamma(\alpha): = \frac{(2\pi)^\alpha}{\pi^2} \cos\left(\frac{\pi \alpha}{2}\right)\Gamma(1-\alpha).
\end{equation}
\end{prop}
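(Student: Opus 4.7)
The plan proceeds in three steps: (i) use $\bfree = \mobb \ast \mathbf{1}$ to reduce the variance to a double sum over pairs in $\sgbsf$; (ii) evaluate the pointwise $X \to \infty$ limit of the correlation kernel via Fourier analysis on the sawtooth; (iii) extract the claimed asymptotic using a Karamata/Mellin analysis driven by the regular variation of $N_{\sgb}$.

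For step (i), writing $\psi(t) = \{t\}-1/2$, the identity $\bfree = \mobb \ast \mathbf{1}$ gives
\[ \Nbfree(n,H) - \meanb H = -\sum_{d \in \sgbsf}\mobb(d)\bigl(\psi((n+H)/d) - \psi(n/d)\bigr). \]
Squaring and averaging yields $M_{2,B}(X,H) = \sum_{d_1,d_2\in\sgbsf}\mobb(d_1)\mobb(d_2)\, c_X(d_1,d_2;H)$, where $c_X$ is the correlation of the two sawtooth differences over $n \leq X$. I would truncate at $d_1, d_2 \leq Y$ for an appropriate cutoff $Y$, with the tail controlled by Cauchy--Schwarz combined with the crude upper bound of Proposition \ref{prop:index_variance} and Montgomery--Vaughan-type estimates on tails of $\mobb$; this truncation error is $o(N_{\sgb}(H))$.

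For step (ii), substituting the Fourier series of $\psi$ and using $\frac{1}{X}\sum_{n\leq X}e(n\theta)\to \mathbf{1}[\theta\in\Z]$ reduces $c_X$ to a diagonal sum. Writing $d = \gcd(d_1,d_2)$, $d_j = da_j$ with $(a_1,a_2) = 1$, the surviving frequencies are $k_j = \ell a_j$ for $\ell \neq 0$, and a short computation using $|e(u)-1|^2 = 4\sin^2(\pi u)$ together with the classical identity $2\sum_{\ell\geq 1}\sin^2(\pi\ell y)/(\pi\ell)^2 = \{y\}(1-\{y\})$ yields
\[ c_\infty(d_1,d_2;H) := \lim_{X\to\infty} c_X(d_1,d_2;H) = \frac{\gcd(d_1,d_2)^2}{d_1 d_2}\, \Phi\!\left(\frac{H}{\gcd(d_1,d_2)}\right), \]
where $\Phi(y) := \{y\}(1-\{y\})$. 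Reorganizing the $(d_1,d_2)$-sum by $d = \gcd$ and using the $B$-multiplicativity of $\mobb$ factors the inner coefficient through the local analysis at each $b \in B$, producing an exact identity $C_{2,B}(H) = \sum_{d \in \sgbsf}\Phi(H/d)\, S(d)$ for a multiplicative weight $S$ on $\sgbsf$.

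The main obstacle lies in step (iii): extracting the asymptotic $\sim A_\alpha N_{\sgb}(H)$ from $\sum_d \Phi(H/d) S(d)$. This requires combining (a) the Mellin transform of the tent kernel, computed via $\int_0^\infty (1-\cos u)\, u^{-1-\alpha}\, du = -\cos(\pi\alpha/2)\Gamma(-\alpha)$ and Parseval, which yields the global prefactor $\zeta(2-\alpha)\gamma(\alpha)$; and (b) a Karamata-type Tauberian analysis of the partial sum $\sum_{d \leq x, d \in \sgbsf} S(d)$ against the regularly varying $N_{\sgb}(x) \sim x^\alpha L(x)$. Identifying the local factors at each $b \in B$ as $(1 - 2/b + 2/b^{1+\alpha} - 1/b^{2\alpha})$ amounts to a residue comparison of the two Dirichlet series at their common singularity $s = \alpha$, after which all constants assemble to give precisely $A_\alpha$. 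Care is also required to pass the regular-variation asymptotic through Abel summation uniformly in the truncation parameter $Y$, and to reorganize the local factors whenever $2 \in B$ so that no division by $1 - 2/b = 0$ occurs.
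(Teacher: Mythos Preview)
Your steps (i)--(ii) are essentially the paper's Lemma~\ref{lem:continous_variance_sum} in disguise: the identity $2\sum_{\ell\ge 1}\sin^2(\pi\ell y)/(\pi\ell)^2=\{y\}(1-\{y\})$ turns the paper's $V$-kernel expression into your tent form $C_{2,B}(H)=\sum_{d\in\sgbsf}\Phi(H/d)S(d)$ with $S(d)=\prod_{b\nmid d}(1-2/b)$, so the two derivations agree.

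The real work is step (iii), and here your outline is too vague on the one point that matters. The hypothesis is that $N_{\sgb}$ is regularly varying, but your sum runs over $\sgbsf$ with weights $S(d)$; nothing tells you a priori that $\sum_{d\le x,\,d\in\sgbsf}S(d)$ is itself regularly varying, so a direct Karamata argument on this sum is not justified. The paper resolves this by writing the Dirichlet series $\F(s)=\sum_{d\in\sgbsf}S(d)d^{-s}=\prod_b(1-2/b+b^{-s})$ as $\G(s)\U(s)$ with $\G(s)=\sum_{r\in\sgb}r^{-s}$ and $\U(s)=\prod_b(1-2/b+2b^{-1-s}-b^{-2s})$ absolutely convergent for $\Re s>\alpha/2$; the corresponding Dirichlet convolution then rewrites $C_{2,B}(H)$ as $2\sum_{r\in\sgb}f(r/H)$ for an explicit $f$, after which P\'olya's version of Karamata applies directly to the regularly varying sequence $\sgb$. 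Your ``residue comparison at $s=\alpha$'' is not quite the right language---under mere regular variation there need be no pole or residue---and in practice making it rigorous forces exactly this convolution step. Once you do that, the local factor and the integral $\int_0^\infty \tau^{1-\alpha}V(\tau)^2\,d\tau$ (equivalently the Mellin transform of your tent kernel) assemble into $A_\alpha$ as you say; the $2\in B$ issue you flag is automatically handled by the Euler product form of $\U$.
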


This generalizes a result of Avdeeva \cite{Avdeeva2015} which requires more robust assumptions about $\sgb$. It also gives a new proof for \eqref{eq:variance_asymptotic} that does not use contour integration and is essentially elementary.

In fact, we do not need an asymptotic formula for the variance to prove that the moments are Gaussian.

\begin{thm}\label{thm:main b}
If $\sgb$ has index $\alpha \in (0,1)$, then
\begin{equation}\label{eq:gen b res}
C_{k,B}(H) = \mu_k C_{2,B}(H)^{\frac{k}{2}} + O_k(H^{\frac{k\alpha}{2}-\frac{c}{k}})
\end{equation}
for every positive integer $k$. Here $c$ is an absolute constant depending only on $\alpha$.
\end{thm}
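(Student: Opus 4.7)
The plan is to compute $C_{k,B}(H)$ directly by expanding the $k$-th moment via the Dirichlet convolution $\bfree = \mobb \ast \mathbf{1}$ and then isolating the Gaussian main term through Wick-type pairings. From $\bfree = \mobb \ast \mathbf{1}$ we obtain
\[ \Nbfree(n,H) - \meanb H = -\sum_{d \in \sgb} \mobb(d)\,\Delta_d(n,H), \qquad \Delta_d(n,H) := \{(n+H)/d\} - \{n/d\}, \]
where $\Delta_d$ has mean zero in $n$ up to negligible boundary error and is effectively supported on $d \leq H^{1+o(1)}$. First I would truncate the $d$-sum at a cutoff $Y = H^{1-\e}$, bounding the tail uniformly in $k$ using the index hypothesis $N_{\sgb}(x) = x^{\alpha+o(1)}$ together with a H\"older argument and the variance estimate of Proposition~\ref{prop:index_variance}.

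Next, expanding the truncated $k$-th moment yields
\[ \sum_{\substack{d_1,\dots,d_k \leq Y \\ d_i \in \sgb}} \mobb(d_1)\cdots\mobb(d_k)\, I_X(\mbf{d};H), \qquad I_X(\mbf{d};H) := \frac{1}{X}\sum_{n\leq X} \prod_{i=1}^k \Delta_{d_i}(n,H). \]
Substituting the Fourier series $\Delta_d(n,H) = \sum_{m\neq 0} c_m(d,H)\,e(mn/d)$ with $c_m(d,H) = (1-e(mH/d))/(2\pi im)$ and averaging over $n$, only frequency vectors $\mbf{m} = (m_1,\dots,m_k)$ satisfying $\sum_i m_i/d_i \in \Z$ survive (up to negligible $X^{-1}$ error), weighted by coefficients decaying as $\prod_i |m_i|^{-1}$.

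The Gaussian main term comes from those surviving configurations that factor as perfect matchings $\pi$ of $\{1,\dots,k\}$: each pair $\{i,j\}\in\pi$ corresponds to the cancellation $m_i/d_i + m_j/d_j = 0$, so the contribution of $\pi$ factors into two-point correlations $\prod_{\{i,j\}\in\pi} I_X(d_i,d_j;H)$. Summing against $\mobb(d_1)\cdots\mobb(d_k)$ and passing to the limit $X \to \infty$ via Proposition~\ref{prop:limiting_moments} (applied to pairs), each paired factor yields $C_{2,B}(H)$; the $(k-1)!!$ distinct pairings then combine to give the Gaussian main term $\mu_k C_{2,B}(H)^{k/2}$. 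For odd $k$ no perfect matching exists, so the main term vanishes.

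The main obstacle is controlling the non-pairing contributions, in which the relation $\sum_i m_i/d_i \in \Z$ links three or more indices non-trivially. I would organize the tuples $\mbf{d}$ by their cluster structure---the connected components of the hypergraph induced by non-trivial relations---and estimate each cluster of size $\geq 3$ using $N_{\sgb}(x) = x^{\alpha+o(1)}$ together with cancellation from $\mobb$ (leveraging the pairwise coprimality of $B$). Each higher-order link should cost a power $H^{-\delta}$ with $\delta = \delta(\alpha) > 0$, and since at most $O(k)$ such links can occur this yields the claimed $H^{c/k}$ saving over the Gaussian scale $H^{k\alpha/2}$. Combining this with the truncation error from the first step completes the argument.
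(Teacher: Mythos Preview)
Your setup matches the paper's: the Fourier expansion of $\Delta_d$, the reduction to the frequency condition $\sum_i m_i/d_i \in \Z$, and the identification of the Gaussian main term via pairings are all as in the paper (see Proposition~\ref{prop:general_limiting_moments}, Lemma~\ref{lem:diag}, Proposition~\ref{prop:diag}). The difficulty lies entirely in the off-diagonal bound, and here your proposal has a genuine gap.

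You assert that ``each higher-order link should cost a power $H^{-\delta}$'' by organising according to cluster structure and using cancellation from $\mobb$. But no cancellation from $\mobb$ is actually used in the paper's off-diagonal bound; the estimate is on $\sum_{\mathbf{r}} \prod_i \frac{\mobb^2(r_i)}{r_i} S_H(\mathbf{r})$ with absolute values. The direct Fundamental Lemma bound (Lemma~\ref{lem:fl bound}) applied to a non-paired cluster only yields $S_H(\mathbf{r}) \ll H^{k/2} \prod r_i / r$, and summing this over $r_i \in \sgbsf$ gives at best $H^{k/2+o(1)}$, not $H^{k\alpha/2}$. This is exactly Hall's bound $M_k \ll H^{(k-1)/2}$, which is too weak by a factor of roughly $H^{(1-\alpha)k/2}$. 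The paper is explicit (see the discussion in the introduction and Remark~\ref{rem:alphaHalf}) that the Montgomery--Soundararajan strategy alone does not close this gap for $B$-frees.

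Two further ingredients are essential. First, Nunes's observation (Lemmas~\ref{lem:hn1}--\ref{lem:hn2}) that tuples with small $[r_1,\dots,r_k]$ or with some small $r_i$ contribute negligibly; this uses the $L^1$-bound of Lemma~\ref{lem:nunes bound} rather than the $L^2$-bound of the Fundamental Lemma, and it forces the remaining off-diagonal $\mathbf{r}$ into the regime where all $r_i > H^{1-c_1}$ and $r > H^{k/2-c_2/k}$. (Note in particular that your proposed truncation at $d_i \leq H^{1-\e}$ goes the wrong way: the significant off-diagonal contribution lives precisely in the large-$r_i$ range.) Second, in that remaining range one applies the refined Fundamental Lemma (Lemma~\ref{lem:lem7gen}), and the crucial term $T_4$ requires a new averaging argument over residue classes via P\'olya--Vinogradov (Propositions~\ref{prop:GrvBd} and~\ref{prop:t4}). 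As Remark~\ref{rem:alphaHalf} explains, the pointwise count used by Montgomery--Vaughan for primes would only save $H^{1/2-\alpha+o(1)}$ here, which is useless for $\alpha \leq 1/2$ (including the squarefree case $\alpha = 1/2$). Your proposal does not mention any of these mechanisms, and the cluster-structure heuristic you describe does not by itself produce the required saving.
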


It is evident that Proposition \ref{prop:index_variance} and Theorem \ref{thm:main b} recover the moment estimate Theorem \ref{thm:GaussianMoments} for squarefrees. Moreover, for the same reasons as given for the central limit theorem there, we have the following theorem.

\begin{thm}\label{thm:bdistGauss}
Let $H = H(X)$ satisfy 
\[H \ra \infty,\text{ yet }\, \,\frac{\log H}{\log X} \ra 0\text{ as }X\ra \infty.\] If the sequence $\sgb$ has index $\alpha \in (0,1)$, then for any $z \in \R$,
\[ \lim_{X \ra \infty} \frac{1}{X}\left|\left\{n \leq X : \frac{\Nbfree(n,H) - \meanb H}{\sqrt{C_{2,B}(H)}} \geq  z\right\}\right| = \frac{1}{\sqrt{2\pi}} \int_z^{\infty} e^{-\frac{t^2}{2}} dt.\]
\end{thm}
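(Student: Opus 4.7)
The plan is to deduce Theorem \ref{thm:bdistGauss} from Theorem \ref{thm:main b}, Proposition \ref{prop:limiting_moments}, and Proposition \ref{prop:index_variance} via the standard method of moments, in exact analogy with the deduction of Theorem \ref{thm:sqDistGauss} from Theorem \ref{thm:GaussianMoments}. Let $n$ be a uniformly random element of $\{1,\dots,X\}$ and set $Y_X := (\Nbfree(n,H) - \meanb H)/\sqrt{C_{2,B}(H)}$, so that $\E\, Y_X^k = M_{k,B}(X,H)/C_{2,B}(H)^{k/2}$. The goal is to show that for each fixed positive integer $k$, this quantity tends to $\mu_k$ as $X \ra \infty$, where $\mu_k$ is the $k$-th moment of a standard Gaussian.

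First I would combine Proposition \ref{prop:limiting_moments} with Theorem \ref{thm:main b} to write
\[ M_{k,B}(X,H) = \mu_k C_{2,B}(H)^{k/2} + O_k\bigl(H^{k\alpha/2-c/k} + H^k X^{(\alpha-1)/(\alpha+1)+\e} + 1\bigr). \]
By Proposition \ref{prop:index_variance}, $C_{2,B}(H) = H^{\alpha+o(1)}$, so $C_{2,B}(H)^{k/2} = H^{k\alpha/2+o(1)}$. Dividing through, the first and third error terms become $H^{-c/k+o(1)}$ and $H^{-k\alpha/2+o(1)}$ respectively, both of which vanish as $H \ra \infty$. For the middle error term, the hypothesis $\log H / \log X \ra 0$ gives $H = X^{o(1)}$, so after dividing by $C_{2,B}(H)^{k/2}$ the middle term becomes $X^{(\alpha-1)/(\alpha+1)+\e+o(1)}$, which tends to $0$ for $\e$ sufficiently small because $(\alpha-1)/(\alpha+1) < 0$ when $\alpha \in (0,1)$.

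Consequently $\E\, Y_X^k \ra \mu_k$ for every positive integer $k$. Since the standard Gaussian distribution is uniquely determined by its sequence of moments $(\mu_k)$, the Fr\'echet--Shohat theorem (see \cite[Section 30]{Billingsley1995}) implies that $Y_X$ converges in distribution to a standard Gaussian random variable, which is exactly the statement of the theorem.

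I do not anticipate a substantive technical obstacle here, as the real work is already accomplished in Theorem \ref{thm:main b} and Proposition \ref{prop:limiting_moments}. What is worth flagging is the reason one cannot weaken the hypothesis on $H$ by this argument: the approximation of $M_{k,B}(X,H)$ by $C_{k,B}(H)$ in Proposition \ref{prop:limiting_moments} costs a factor of $H^k$, while its $X$-saving is only the fixed exponent $(\alpha-1)/(\alpha+1)$, so controlling this error simultaneously for all fixed $k$ forces $H = X^{o(1)}$. A Gaussian limit theorem valid in a polynomial range of $H$ would accordingly require first sharpening the $k$-dependence in Proposition \ref{prop:limiting_moments}.
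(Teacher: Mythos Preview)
Your proposal is correct and follows essentially the same approach as the paper: the paper does not write out a separate proof of Theorem~\ref{thm:bdistGauss} but instead notes (just before the theorem and in Remark~\ref{rem:gaussian_limited_H}) that combining Theorem~\ref{thm:main b} with Proposition~\ref{prop:limiting_moments} yields Gaussian asymptotics for $M_{k,B}(X,H)$ whenever $H \le X^{c_\alpha/k - \varepsilon}$, and then invokes the moment method (\cite[Section~30]{Billingsley1995}) exactly as in the deduction of Theorem~\ref{thm:sqDistGauss}. Your explicit error analysis, including the use of Proposition~\ref{prop:index_variance} to normalize and the observation that $H = X^{o(1)}$ absorbs the $H^k$ factor, is precisely the computation underlying that remark.
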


\begin{rem}\label{rem:gaussian_limited_H}
Combining Theorem \ref{thm:main b} and Proposition \ref{prop:limiting_moments}, we see that for each $k$, the moments $M_{k,B}(X,H)$ will be asymptotically Gaussian as long as $H, X\ra \infty$ with \[H \le X^{\frac{c_{\alpha}}{k}-\varepsilon},\text{ where  }c_{\alpha} = \frac{1-\alpha}{(1+\alpha)(1-\frac{\alpha}{2})}.\]
\end{rem}
In Section \ref{subsec:longGaps} we give a further application of Theorem \ref{thm:main b} to estimates for the frequency of long gaps between consecutive $B$-free numbers, improving results of Plaksin \cite{Plaksin1990} and Matom\"{a}ki \cite{Matomaki2012}.

\subsection{Fractional Brownian motion}\label{subsec:fBm}

We have mentioned that the squarefrees and more generally $B$-frees in a random interval $(n,n+H]$ with $n\leq X$ are governed by a stationary ergodic process if $H$ remains fixed. The number of $B$-frees in such an interval is $\meanb H$ on average. This process has measure-theoretic entropy which becomes smaller the larger $H$ is chosen to be, and thus does not seem very `random'. This may be compared to primes in short intervals $(n, n + \lfloor \lambda \log X\rfloor ]$, which contain on average $\lambda$ primes. In such intervals primes are conjectured to be distributed as a Poisson point process, and thus appear very `random.'

Nonetheless a glance at Figure \ref{fig:primes_squarefrees} comparing squarefrees to primes -- along with consideration of the central limit theorems we have just discussed -- reveals that at a scale of $H\rightarrow\infty$, $B$-frees still retain some degree of randomness. It turns out that there is a natural framework to describe the `random' behavior of $B$-frees at this scale (analogous to the Poisson process for primes above) and this is fractional Brownian motion. 

We give here a short introduction to fractional Brownian motion, as we believe this perspective sheds substantial light on the distribution of $B$-frees; however, the remainder of the paper is arranged so that a reader only interested in the central limit theorems of the previous sections can avoid this material.

\begin{def1}\label{def:fBm}
A random process $\{Z(t) : t \in [0,1]\}$ is said to be a fractional Brownian motion with Hurst parameter $\gamma \in (0,1)$ if $Z$ is a continuous-time Gaussian process which satisfies $Z(0) = 0$ and also satisfies $\E\, Z(t) = 0$ for all $t \in [0,1]$ and has covariance function
\begin{equation}\label{eq:covar_fBm}
\E\, Z(t) Z(s) = \frac{1}{2}\left(t^{2\gamma} + s^{2\gamma} - |t-s|^{2\gamma}\right),
\end{equation}
for all $t, s \in [0,1].$
\end{def1}

Using $Z(0) = 0$, it is easy to see the covariance condition \eqref{eq:covar_fBm} is equivalent to
\begin{equation}\label{eq:covar_fBm_alt}
\E\, |Z(t)-Z(s)|^2 = |t-s|^{2\gamma}.
\end{equation}
For a proof that such a stochastic process exists and is uniquely defined by this definition see e.g. \cite{Nourdin}.

Classical Brownian motion is a fractional Brownian motion with a Hurst parameter of $\gamma = 1/2$. If $\gamma > 1/2$, increments of the process are positively correlated, with a rise likely to be followed by another rise, while if $\gamma < 1/2$, increments of the process are negatively correlated.

Donsker's theorem is a classical result in probability theory showing that a random walk with independent increments scales to Brownian motion (see \cite[Section 8]{Billingsley99}). We prove an analogue of Donsker's theorem for counts of $B$-frees using the following set up. We select a random starting point $n \le X$ at uniform and define the random variables $\xi_1, \xi_2, \ldots$ in terms of $n$ by
\[ \xi_k = \bfree(n+k) - \meanb = \begin{cases} 1 - \meanb & \text{if $n+k$ is $B$-free,} \\
-\meanb & \text{otherwise}. \end{cases}\]
Set
\begin{equation}\label{eq: partial sum interpol}
Q(\tau) = \sum_{k=1}^{\lfloor \tau \rfloor} \xi_k + \{\tau\} \xi_{\lfloor \tau \rfloor + 1},
\end{equation}
where $\{\tau\}$ denotes the fractional part.

For integer $\tau$ this is a random walk which increases on $B$-frees and decreases otherwise, and for non-integer $\tau$, the function $Q(\tau)$ linearly interpolates between values; thus $Q(\tau)$ is continuous.

\begin{thm}\label{thm:fBm_main}
Let $H = H(X)$ satisfy \[H\ra \infty, \text{ yet }\,\,\frac{\log H}{\log X} \ra 0\text{ as }X \ra \infty,\] and choose a random integer $n \in [1,X]$ at uniform. Suppose that $\sgb$ is a regularly varying sequence of index $\alpha \in (0,1)$ and define the function
\begin{equation}\label{eq:W_def}
W_X(t) = \frac{1}{\sqrt{A_\alpha N_{\sgb}(H)}}Q(t\cdot H),
\end{equation}
where $A_\alpha$ is defined by \eqref{eq:eulerprod_var}. Then, as a random element of $C[0,1]$, the function $W_X$ converges in distribution to a fractional Brownian motion with Hurst parameter $\alpha/2$ as $X\rightarrow\infty$.
\end{thm}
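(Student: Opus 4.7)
The plan is the standard two-step recipe for Donsker-type theorems (see \cite[Chapter 2]{Billingsley99}): establish (i) convergence of the finite-dimensional distributions of $W_X$ to those of a fractional Brownian motion $Z$ with Hurst parameter $\alpha/2$, and (ii) tightness of $\{W_X\}$ in $C[0,1]$. Continuity of $W_X$ and the condition $W_X(0)=0$ are built into the definition \eqref{eq: partial sum interpol}.

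For (i), fix $0=t_0<t_1<\cdots<t_m\le 1$. Up to a rounding error that is negligible after normalisation, the increment $W_X(t_j)-W_X(t_{j-1})$ is
\[
\frac{\Nbfree(n+t_{j-1}H,(t_j-t_{j-1})H)-\meanb(t_j-t_{j-1})H}{\sqrt{A_\alpha N_{\sgb}(H)}}.
\]
By Proposition \ref{prop:regularlyvarying_variance} combined with slow variation of the function $L$ underlying $N_{\sgb}$, one has $\E(W_X(t)-W_X(s))^2\sim N_{\sgb}((t-s)H)/N_{\sgb}(H)\to (t-s)^\alpha$, and polarisation delivers $\E\,W_X(t)W_X(s)\to \tfrac12(t^\alpha+s^\alpha-|t-s|^\alpha)$, matching the fBm covariance at Hurst parameter $\alpha/2$. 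To upgrade to joint Gaussianity I would use the Cram\'er--Wold device: any linear combination $\sum_j d_j(W_X(t_j)-W_X(t_{j-1}))$ equals, up to normalisation, $\sum_k w(k)(\bfree(n+k)-\meanb)$ for a weight $w$ piecewise constant on the disjoint subintervals $(t_{j-1}H,t_jH]$. I would then extend the sieve analysis behind Theorem \ref{thm:main b} from the flat weight $\mbf{1}_{(0,H]}$ to such piecewise-constant $w$, showing that the $k$-th moment of the weighted sum converges to $\mu_k V(d)^{k/2}$, where $V(d)$ is an explicitly computable quadratic form in $d$. The preceding covariance calculation identifies $V(d)$ with the fBm variance of the corresponding increment functional, so joint Gaussianity follows by the method of moments.

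For (ii), I would apply Kolmogorov's moment criterion for tightness in $C[0,1]$ (\cite[Section 7]{Billingsley99}). Picking any even $k>2/\alpha$, so that $k\alpha/2>1$, Theorem \ref{thm:main b} applied at length $H'=(t-s)H$, combined with Proposition \ref{prop:regularlyvarying_variance} and Potter's inequality for $L$, yields $\E|W_X(t)-W_X(s)|^k\le C_k|t-s|^{k\alpha/2}$ uniformly for $0\le s\le t\le 1$ whenever $(t-s)H$ is large; for $(t-s)H$ of bounded size the increment is an $O(1)$-long sum of terms of magnitude at most $1$, and the normalisation $\sqrt{A_\alpha N_{\sgb}(H)}\asymp H^{\alpha/2}L(H)^{1/2}$ gives an even stronger bound. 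Kolmogorov's criterion then supplies tightness.

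The main technical obstacle is the piecewise-constant-weight extension of Theorem \ref{thm:main b} needed in step (i): one must verify that replacing the flat weight by a $w$ constant on finitely many disjoint subintervals still produces a Gaussian limiting $k$-th moment with the correct variance, uniformly in the choice of the $d_j$ and the $t_j$. I expect this to go through by rerunning the proof of Theorem \ref{thm:main b} essentially unchanged, since the sieve expansion interacts with the weight only through partial sums of $w$ over the constituent subintervals, and the perfect-pairing combinatorics that produces $\mu_k$ in the flat case is insensitive to this modification; only the variance quadratic form $V(d)$ sees the shape of $w$.
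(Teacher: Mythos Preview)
Your proposal is correct and follows essentially the same route as the paper: finite-dimensional convergence via Cram\'er--Wold plus a weighted-moment extension of Theorem~\ref{thm:main b}, and tightness via the Kolmogorov--Chentsov moment criterion with an even moment of order exceeding $2/\alpha$. The extension to piecewise-constant (more generally, bounded-variation) weights that you flag as the main obstacle is indeed carried out in the paper (Theorems~\ref{thm:C_general_gaussian} and~\ref{thm:CLT_general_weights}), and your Potter-bound handling of the short-increment regime is essentially the content of Lemma~\ref{lem:upperbound_regularvar}.
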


\begin{figure}[ht]
    \centering
	\begin{subfigure}{0.45\textwidth}
        \includegraphics[width=\textwidth]{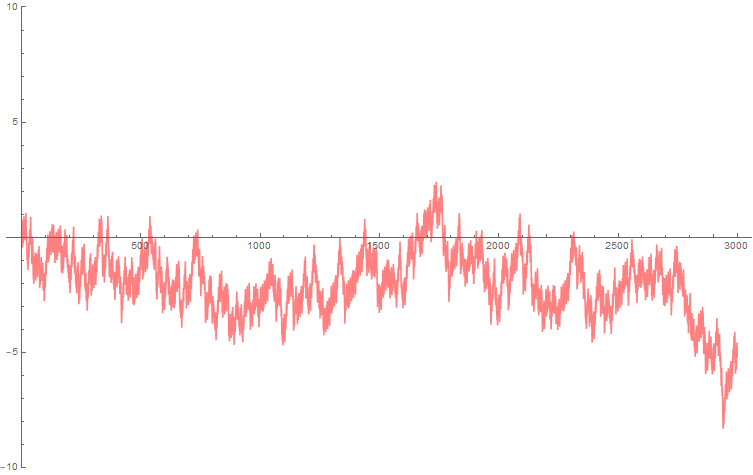}
        \caption{A random walk on squarefrees}
    \end{subfigure}
	\qquad
    \begin{subfigure}{0.45\textwidth}
        \includegraphics[width=\textwidth]{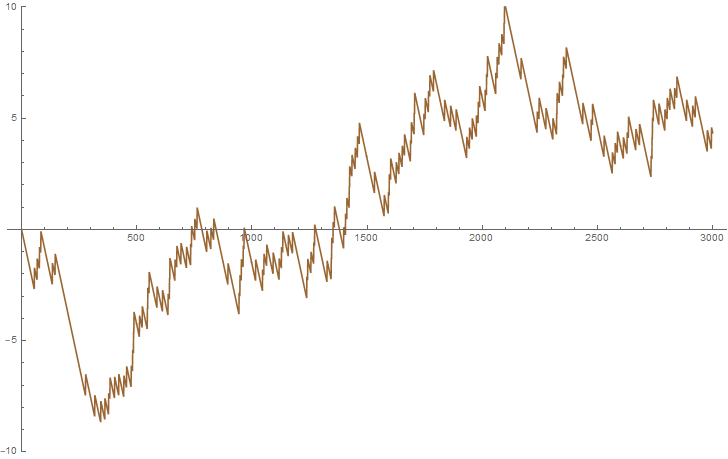}
        \caption{A random walk on primes}
    \end{subfigure}
    
	\caption{A graph of $Q(\tau)$ and an analogue for the primes in the short interval $(n,n+H]$ with $n = 875624586$ and $H = 3000$. (a) illustrates a walk which increments by $1 - 6/\pi^2$ if $u$ is squarefree, and $-6/\pi^2$ otherwise. (b) illustrates a walk which increments by $1 - 1/\log u$ if $u$ is prime, and $-1/\log u$ if $u$ is composite.}
\end{figure}

\begin{figure}[ht]
    \centering
	\begin{subfigure}{0.45\textwidth}
        \includegraphics[width=\textwidth]{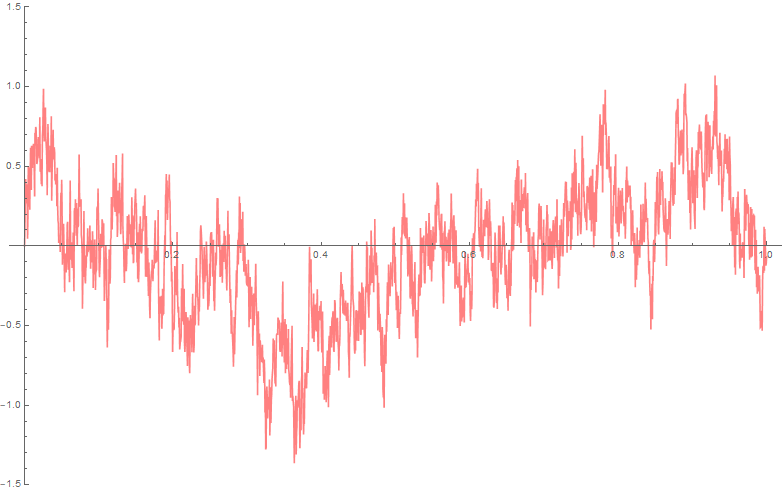}
        \caption{$\gamma = 0.25$ (fBm)}
    \end{subfigure}
	\qquad
    \begin{subfigure}{0.45\textwidth}
        \includegraphics[width=\textwidth]{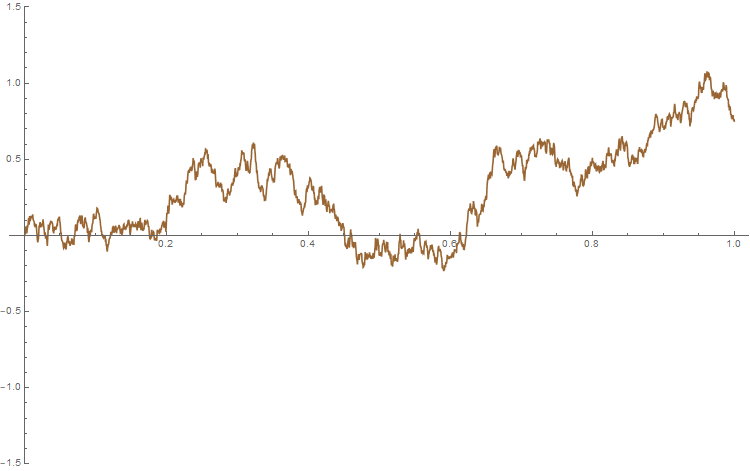}
        \caption{$\gamma = 0.5$ (Brownian motion)}
    \end{subfigure}
	\caption{A fractional Brownian motion and Brownian motion respectively, randomly generated using Mathematica, to be compared with the previous figure.}
\end{figure}

Our proof of this result follows similar ideas as the proof of Theorem \ref{thm:main b}.

Note that $\alpha/2 < 1/2$, so only a fractional Brownian motion with negatively correlated increments can be induced this way. It would be very interesting to understand functional limit theorems of this sort in the context of ergodic processes related to the $B$-frees described in e.g. \cite{Avdeeva2016, Dymek2017, Dymek2018, ElAbdalaoui2015, Kasjan2019, Kulaga2015, Mentzen2017}. There seem to exist only a few other constructions in the literature of a fractional Brownian motion as the limit of a discrete model, e.g. \cite{Arratia,Enqriquez, Hammond, Parczewski, Sottinen}.

\subsection{Notation and conventions}

Throughout the rest of this paper we allow the implicit constants in $\ll$ and $O(\cdot)$ to depend on $k$ when considering a $k$-th moment, and implicit constants are always for a fixed sieving set $B$. Later we will introduce a weight function $\vp$ and implicit constants depend on $\vp$ as well. Throughout the paper where $B$ has an index $\alpha$, for simplicity we will assume that $\alpha \in (0,1)$. Some proofs would remain correct if $\alpha = 0$ or $1$, but the proofs of our central results would not be. We use the notation $a \sim x$ as a subscript in some sums to mean $x < a \leq 2x$. In general we follow standard conventions; in particular $e(x) = e^{i 2\pi x}$, $\|x\|$ denotes the distance of $x \in \R$ to the nearest integer, $(a,b,c)$ is the greatest comment divisor of $a$, $b$, and $c$, while $[a,b,c]$ is the least common multiple.

\subsection{The structure of the proof}

There is a heuristic way to understand the Gaussian variation of $\Nbfree(n,H)$. Note that
\begin{align*}
\Nbfree(n,H) = \sum_{n < vd \leq n+H} \mobb(d) &= \sum_{\ss{d\in \sgbsf \\ d \leq X+H}}\mobb(d)\Big( \Big\lfloor\frac{n+H}{d}\Big\rfloor - \Big\lfloor\frac{n}{d}\Big\rfloor \Big) \\
&= \sum_{\ss{d\in \sgbsf \\ d \leq X+H}} \Big( \frac{\mobb(d)}{d} H + \mobb(d) \Big(\Big\{ \frac{n}{d} \Big\} - \Big\{ \frac{n+H}{d} \Big\} \Big) \Big).
\end{align*}
The contribution of the first summand is close to the value $\meanb H$ around which $\Nbfree(n,H)$ oscillates. On the other hand, the functions $n \mapsto \{(n+H)/d\} - \{n/d\}$ are mean-zero functions of period $d$ and thus are linear combinations of terms $e(n\ell/d)$ for $1\leq \ell \leq d-1$. Upon reducing the fractions $\ell/d$ by the maximal divisor $b | (\ell,d)$ with $b \in \sgb$, we see that $\Nbfree(n,H) - \meanb H$ is approximated by a linear combination of terms $e(n\ell/d)$ for $(\ell,d)$ $B$-free and $d \in \sgbsf$.

Heuristically, if $X$ is large and $n \in [1,X]$ is chosen uniformly at random, one may expect the terms $\{ e(n\ell/d): \; (\ell,d) \text{ $B$-free}\}$ to behave like a collection of independent random variables, and this would imply the Gaussian oscillation of $\Nbfree(n,H)$.

Nonetheless we do not quite have independence; instead, roughly speaking, one may use the same Fourier decomposition to relate the $k$-th moment $M_{k,B}(X,H)$ to (weighted) counts of solutions to the equation
\begin{equation}\label{eq:fundamental_equation}
\frac{\ell_1}{d_1} + \cdots + \frac{\ell_k}{d_k} \equiv 0 \bmod 1,
\end{equation}
where $\|\ell_i/d_i\| < 1/H$ for all $i$. Indeed, the realization that $M_k(X,H)$ for the squarefrees are related to these counts appears already in \cite{Hall2}.

It can be seen that Gaussian behavior will then follow from most solutions to the above equation being diagonal, meaning there is some pairing $\ell_i/d_i \equiv - \ell_j/d_j \bmod 1$ for all $i$, and this is what we demonstrate. Our main tool is the Fundamental Lemma and its extensions (see Lemmas \ref{lem:flgen}, \ref{lem:msgen}, and \ref{lem:lem7gen}), developed by Montgomery--Vaughan \cite{MV} and later used by Montgomery--Soundararajan \cite{MS} to prove a conditional central limit theorem for primes in short intervals.

However, this strategy if used by itself is not sufficient to prove a central limit theorem; the Fundamental Lemma was already known to Hall who used it to obtain his upper bound $M_k(X;H) \ll H^{(k-1)/2}$. The reason this strategy does not work as it did for Montgomery--Soundararajan is that $M_{k,B}(X;H)$ has size roughly $H^{\alpha k/2}$; for primes the $k$-th moment of a short interval count is (conditionally) much larger. Thus in order to recover a main term, our error terms must be shown to be substantially smaller than for the primes.

We obtain an upper bound for the number of off-diagonal solutions to \eqref{eq:fundamental_equation} by bringing in two ideas in addition to those used by Montgomery--Soundararajan. The first is due to Nunes, who showed in the recent paper \cite{Nunes1} that solutions to \eqref{eq:fundamental_equation} make a contribution to $M_k(X;H)$ only when $d_i$ is larger than $H^{1-o(1)}$ for all $i$ and the least common multiple of the $d_i$ is larger than $H^{k/2-o(1)}$; this argument appears in Section \ref{sec:Nunes_bound}. The second is an observation that bounding a term $T_4$ which appears in a variant of the Fundamental Lemma (Lemma \ref{lem:lem7gen}) requires a more delicate treatment than that which appears in \cite[Lemma 8]{MV}.  However, it can be accomplished in our context by counting solutions to a certain congruence equation, which in turn can be estimated using an averaging argument relying crucially on the P\'{o}lya--Vinogradov inequality; this is done in Section \ref{sec:fl_bound}. See Remark \ref{rem:alphaHalf} for further discussion of how the more direct, but less quantitatively precise, approach of \cite[Lemma 8]{MV} is insufficient in our context.

In order to prove that counts of $B$-frees scale to a fractional Brownian motion it is not sufficient to consider the flat counts $\Nbfree(n,H)$, but instead we must consider the weighted counts
\[ \sum_{u \in \Z} \vp\Big(\frac{u-n}{H}\Big) \bfree(u)\]
where $\vp$ belongs to a class of functions that includes step-functions. A proof of a central limit theorem for flat counts remains essentially unchanged as long as $\vp$ is of bounded variation and compactly supported in $[0,\infty)$. Convergence to a fractional Brownian motion as a corollary of this central limit theorem is discussed in Section \ref{sec:fBm_proof}.

\subsection{Acknowledgements}
O.G. is supported by funding from the European Research Council (ERC) under the European Union's Horizon 2020 research and innovation programme (grant agreement No 851318).
Most of this work was completed while A.M. was a CRM-ISM postdoctoral fellow at the Centre de Recherches Math\'{e}matiques. He would like to thank the CRM for its financial support. B.R. received partial support from an NSERC grant and US NSF FRG grant 1854398. Some work on this project was done during a research visit to the American Institute of Mathematics and we thank that institute for its hospitality. We thank Francesco Cellarosi for useful discussions, as well as the anonymous referee for carefully reading the paper and providing a number helpful comments leading to improvements in its exposition.

\section{An expression for moments}

In this section we prove Proposition \ref{prop:limiting_moments}, giving an expression for $C_{k,B}(H)$. The results proved in this section all suppose that the sieving set $B$ is such that $\sgb$ has index $\alpha$. (We do not yet need to suppose that $\sgb$ is regularly varying.)

In order to eventually have a nice framework to prove Theorem \ref{thm:fBm_main} on fractional Brownian motion, we generalize the moments we consider. Suppose $\vp\colon \mathbb{R} \to \mathbb{R}$ is a bounded function supported in a compact subset of $[0,\infty)$. Define $M_{k,B}(X,H;\vp) = M_k(X,H;\vp)$ by
\[ M_k(X,H;\vp) = \frac{1}{X}\sum_{n\le X} \left( \sum_{u \in \Z} \vp\Big(\frac{u-n}{H}\Big) \bfree(u) - \meanb \sum_{h \in \Z} \vp\left(\frac{h}{H}\right) \right)^k.\]
For $\vp = \intervalind$ this recovers $M_{k,B}(X,H)$ as defined above. We have tried to write this paper so that a reader interested only in the more traditional Theorem \ref{thm:bdistGauss} can read it with this specialization in mind.

We introduce the arithmetic function $g_B=g$ defined by 
\[g(n) = \frac{\mobb(n)}{n}\prod_{b\nmid n}(1-b^{-1}).\]

Given a bounded function $\vp\colon \mathbb{R} \to \mathbb{R}$ supported on a compact subset $[0,\infty)$ we also define the $1$-periodic function
\begin{equation}\label{eq:Phi_H_def}
\Phi_H(t) = \sum_{m \in \Z} e(mt) \vp\left(\frac{m}{H}\right),
\end{equation}
where as usual, $e(u) := e^{2\pi i u}$ for $u \in \mb{R}$. 
Finally, given $r \in \sgbsf$ we denote by $\mc{R}_B(r)$ the subset of the group $\R/\Z$ given by
\[ \mc{R}_B(r) = \left\{ \frac{a}{r} : 1 \le a \le r, \, (a,r) \text{ is $B$-free} \right\}/\Z.\]

\begin{prop}\label{prop:general_limiting_moments}
If the sequence $\sgb$ has index $\alpha \in (0,1)$ and $\vp$ is of bounded variation and supported in a compact subset of $[0,\infty)$, then for all fixed integers $k \geq 1$ and any $\e > 0$, as long as $H \leq X$
\[M_k(X,H;\vp) = C_k(H;\vp) + O(H^k X^{\tfrac{\alpha-1}{\alpha+1}+\e} + 1),\]
where $C_k(H;\vp)$ is defined by the absolutely convergent sum
\[C_k(H;\vp) = C_{k,B}(H;\vp) := \sum_{\ss{r_1,\ldots,r_k > 1 \\ r_j \in \sgbsf \,\forall j}} \prod_{j=1}^k g(r_j) \sum_{\ss{ \sg_1 + \cdots + \sg_k \in \Z \\ \sg_i \in \mc{R}_B(r_i) \, \forall i}} \prod_{1 \leq j \leq k} \Phi_H(\sigma_j).\]
\end{prop}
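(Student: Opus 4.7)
The plan is to expand $\sum_u \vp((u-n)/H)\bfree(u)$ into a Fourier-like series in $n$ via $\bfree = \mobb\ast\mathbf{1}$, raise to the $k$-th power, and extract $C_k(H;\vp)$ as the diagonal contribution (tuples of phases summing to zero modulo $1$), controlling the remainder by exponential-sum estimates.

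Applying the orthogonality relation $\mathbf{1}_{d\mid u} = d^{-1}\sum_{a\bmod d}e(au/d)$ gives
\[ \sum_u \vp\!\left(\tfrac{u-n}{H}\right)\bfree(u) = \sum_{d\in\sgbsf}\frac{\mobb(d)}{d}\sum_{a\bmod d}\Phi_H(a/d)\,e(an/d). \]
The $a=0$ contribution is $\Phi_H(0)\sum_{d\in\sgbsf}\mobb(d)/d = \meanb\Phi_H(0)$, which cancels the centering. The remaining nonzero fractions $a/d\bmod 1$ with $d\in\sgbsf$ reduce uniquely to $c/r$ with $r\in\sgbsf$, $r>1$, and $(c,r)$ $B$-free, via cancelling the largest $\sgb$-divisor of $(a,d)$; the $(d,a)$-preimages of $(r,c)$ are $(rb,cb)$ for $b\in\sgbsf$ with $(b,r)=1$, and summing $\mobb(rb)/(rb) = (\mobb(r)/r)(\mobb(b)/b)$ over such $b$ produces $(\mobb(r)/r)\prod_{b'\in B,\, b'\nmid r}(1-1/b') = g(r)$. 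Writing $\sigma = c/r\in\mc{R}_B(r)$, one arrives at the central identity
\[ F(n) := \sum_u \vp\!\left(\tfrac{u-n}{H}\right)\bfree(u) - \meanb\Phi_H(0) = \sum_{\substack{r>1\\ r\in\sgbsf}} g(r) \sum_{\sigma\in\mc{R}_B(r)}\Phi_H(\sigma)\,e(\sigma n). \]

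Computing $M_k(X,H;\vp) = X^{-1}\sum_{n\le X}F(n)^k$ by expanding and swapping sums, the inner $n$-average $X^{-1}\sum_{n\le X}e((\sigma_1+\cdots+\sigma_k)n)$ equals $1$ when $\sigma_1+\cdots+\sigma_k\in\Z$ and is $O(\min(1,(X\|\sigma_1+\cdots+\sigma_k\|)^{-1}))$ otherwise. The diagonal tuples produce $C_k(H;\vp)$ once absolute convergence is verified, which follows from $|g(r)|\ll 1/r$, the estimate $|\Phi_H(\sigma)|\ll\min(H,\|\sigma\|^{-1})$ (a consequence of $\vp$ having bounded variation with compact support, via summation by parts), and the index estimate $N_{\sgb}(R)\ll R^{\alpha+o(1)}$.

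It remains to control the off-diagonal contribution $E$. I truncate each $r_j$ at a threshold $R$: in the main region $r_j\le R$, the bound $\|\sigma_1+\cdots+\sigma_k\|\ge \lcm(r_1,\ldots,r_k)^{-1}\ge R^{-k}$ gives an $n$-average factor of at most $R^k/X$, and counting $(r_j,\sigma_j)$-tuples via $N_{\sgb}(R)\ll R^{\alpha+o(1)}$ together with $|\Phi_H(\sigma)|\le H$ and $|g(r)|\ll 1/r$ produces a polynomial-in-$R$ contribution. In the tail (some $r_j > R$), I use Cauchy--Schwarz against the Parseval-type estimate $\sum_{\sigma\in\mc{R}_B(r)}|\Phi_H(\sigma)|^2 \ll (H^2+rH)\|\vp\|_\infty^2$, combined with convergence of $\sum_{r>R}|g(r)|$-weighted tails via partial summation against the index. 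Balancing the two contributions with $R = X^{1/(\alpha+1)+o(1)}$ equates the errors and yields the claimed bound $O(H^k X^{(\alpha-1)/(\alpha+1)+\e}+1)$. The main obstacle is arranging this trade-off sharply: the $\sigma$-sums have up to $r$ elements with $|\Phi_H|$ as large as $H$, while $\|\sigma_1+\cdots+\sigma_k\|^{-1}$ can reach $\lcm(r_j)\sim \prod r_j$, and matching these against the index hypothesis for $\sgb$ (rather than merely for $\sgbsf$) is what produces the precise exponent $(\alpha-1)/(\alpha+1)$.
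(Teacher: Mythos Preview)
Your approach shares the paper's broad strategy---Fourier expand, extract the diagonal as $C_k(H;\vp)$, bound the off-diagonal---and your derivation of the factor $g(r)$ via reduction of fractions is correct and matches what the paper does at the very end. However, the error analysis has real gaps.

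First, a technical point: the ``central identity'' $F(n)=\sum_{r>1} g(r)\sum_{\sigma\in\mc{R}_B(r)}\Phi_H(\sigma)e(\sigma n)$ does not converge absolutely (indeed $\sum_r |g(r)|\sum_\sigma|\Phi_H(\sigma)|\ll(\log H)\sum_{r\in\sgbsf}1=\infty$), so the subsequent expansion of $F(n)^k$ and interchange of sums is not justified as written. The paper avoids this by working with the unreduced expansion $\sum_{d\in\sgbsf}\mobb(d)\psi_H(n,d)$ throughout and reducing fractions only after the absolutely convergent quantity $C_k^\ast(H;\vp)$ has been isolated.

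More seriously, your truncation scheme does not recover the stated exponent. Truncating each $r_j$ at $R$ individually allows $[r_1,\ldots,r_k]$ to reach $R^k$; combining your bounds $|\Phi_H|\le H$, $|g(r)|\ll 1/r$, $|\mc{R}_B(r)|\le r$ with the $n$-average $\ll R^k/X$ gives an off-diagonal contribution of order $H^k R^{k(1+\alpha)+o(1)}/X$, which at $R=X^{1/(1+\alpha)}$ is $H^k X^{k-1+o(1)}$---far too large for $k\ge 2$. The paper truncates instead by a single parameter $z$ on the \emph{least common multiple} $[d_1,\ldots,d_k]$, so the analogous off-diagonal bound becomes $H^{o(1)}z^{1+\alpha+o(1)}/X$.

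The essential missing ingredient is the complementary region $[d_1,\ldots,d_k]>z$. Your tail argument (some $r_j>R$, Cauchy--Schwarz against $\sum_\sigma|\Phi_H(\sigma)|^2$) cannot close: once the diagonal constraint is dropped the remaining $k-1$ factors produce a divergent sum. The paper does not attack this region on the Fourier side at all; it stays in physical space and invokes a correlation estimate (Lemma~\ref{lem:corrlarged}, proved by induction on $k$),
\[
\sum_{\substack{d_1,\ldots,d_k\in\sgbsf\\ [d_1,\ldots,d_k]>z}}\ \sum_{n\le x}\ \prod_{i=1}^k\Bigl(\mathbf{1}_{n\equiv -m_i\,(d_i)}+\frac{1}{d_i}\Bigr)\ \ll\ X^{o(1)}\bigl(Xz^{\alpha-1+o(1)}+X^{\frac{2\alpha}{\alpha+1}}\bigr),
\]
applied across the $O(H^k)$ choices of shifts. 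It is this lemma, together with the small-LCM Fourier bound and a completion of $C_k^\ast$ via the Fundamental Lemma, that produces the exponent $(\alpha-1)/(\alpha+1)$ after choosing $z=X^{(1-\e)/(\alpha+1)}$; without an analogue of it your argument cannot be completed.
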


Obviously Proposition \ref{prop:general_limiting_moments} implies Proposition \ref{prop:limiting_moments} by setting $\vp = \intervalind$.

Our ultimate goal will be to prove generalizations of Theorems \ref{thm:GaussianMoments} and \ref{thm:main b}, showing that the moments $M_k(X,H;\vp)$ and $C_k(H;\vp)$ have Gaussian asymptotics for general $\vp$. This will be the content of Theorems \ref{thm:C_general_gaussian} and \ref{thm:CLT_general_weights}.

\subsection{The Fundamental Lemma and other preliminaries}

We prove Proposition \ref{prop:general_limiting_moments} in the next subsection but first we must introduce a few tools.

For $t \in \mb{R}$ and $Y \geq 1$ we let
\begin{equation}\label{eq:EHDef}
E_Y(t) = \sum_{n=1}^Y e(nt).
\end{equation}
Note that $E_H$ is $\Phi_H(t)$ for $\vp = \intervalind$. One has $E_H(t) \ll F_H(t)$ for all $t$, where
\begin{equation}\label{eq:FHbound}
F_H(t) :=\min\left\{H, \frac{1}{\| t \|}\right\},
\end{equation}
and recall $\|t\|$ is the distance from $t$ to the nearest integer. The next lemma studies the first and second moment of $F_H$. The second moment was studied in \cite[Lemma~4]{MV} and the first moment is implicit in \cite[Lemma~2.3]{Nunes1}; we include a proof for completeness.
\begin{lem}\label{lem:FHmoments}
We have
\begin{align} \sum_{\ell=1}^{d-1} F_H\left( \frac{\ell}{d}\right) &\ll d \min \{\log d, \log H\},\\
\sum_{\ell=1}^{d-1} F^2_H\left( \frac{\ell}{d}\right) &\ll d \min \{ d, H\}.
\end{align}
\end{lem}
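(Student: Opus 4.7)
The plan is to reduce both sums to one-sided sums by exploiting the symmetry $\|\ell/d\| = \|(d-\ell)/d\|$ and then split the remaining sum according to which of the two quantities $H$ and $1/\|\ell/d\|$ attains the minimum in the definition of $F_H$.

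\medskip

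First I observe that $F_H(\ell/d) = F_H((d-\ell)/d)$, so pairing $\ell$ with $d-\ell$ gives
\[
\sum_{\ell=1}^{d-1} F_H^j\!\left(\frac{\ell}{d}\right) \ll \sum_{1 \le \ell \le d/2} F_H^j\!\left(\frac{\ell}{d}\right)
\]
for $j=1,2$. For $\ell$ in this range, $\|\ell/d\| = \ell/d$, hence $F_H(\ell/d) = \min\{H, d/\ell\}$. The transition between the two branches of the minimum occurs at the threshold $\ell_0 := d/H$ (up to rounding), so I split the sum as $\sum_{\ell \le \ell_0} + \sum_{\ell_0 < \ell \le d/2}$.

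\medskip

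For the first moment, when $H \le d$ (so $\ell_0 \ge 1$), the initial piece contributes $\ll H \cdot (d/H) = d$, and the tail piece contributes $\ll d \sum_{d/H < \ell \le d/2} 1/\ell \ll d \log H$, giving $\ll d \log H$ in total. When $H > d$ the threshold drops below $1$, so the minimum is always $d/\ell$, and the sum collapses to $\ll d \sum_{\ell \le d/2} 1/\ell \ll d \log d$. Combining the two cases yields the stated bound $d \min\{\log d, \log H\}$.

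\medskip

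For the second moment I apply the same dichotomy with $F_H^2$ in place of $F_H$. When $H \le d$, the two pieces contribute $H^2 \cdot (d/H) = dH$ and $d^2 \sum_{\ell > d/H} 1/\ell^2 \ll d^2 \cdot (H/d) = dH$ respectively, so the sum is $\ll dH$. When $H > d$, the sum is $\ll d^2 \sum_{\ell \le d/2} 1/\ell^2 \ll d^2$. In both cases the bound is $\ll d\min\{d,H\}$, as claimed. The argument is entirely elementary; no step presents any real obstacle, the main point being just to be careful about which branch of the minimum dominates in each regime.
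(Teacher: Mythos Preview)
Your proof is correct and follows essentially the same approach as the paper: both fold the sum by the symmetry $\|\ell/d\|=\|(d-\ell)/d\|$, reduce to $1\le \ell\le d/2$ where $F_H(\ell/d)=\min\{H,d/\ell\}$, split at the threshold $\ell\approx d/H$, and finish with the elementary estimates $\sum_{j\le n}1/j=\log n+O(1)$ and $\sum_{j>n}1/j^2\ll 1/n$. The only cosmetic difference is that the paper packages the case distinction via $\max\{1,d/H\}$ in a single displayed inequality rather than treating $H\le d$ and $H>d$ separately for the first moment.
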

\begin{proof}
We use \eqref{eq:FHbound} to obtain 
\[ \sum_{\ell=1}^{d-1} F^k_H\left( \frac{\ell}{d}\right) \le \sum_{\ell=1}^{d-1} \min\left\{ H, \left\| \frac{\ell}{d} \right\|^{-1}\right\}^k \le 2\sum_{\max\{1,d/H\} \le j \le d/2}  \frac{d^k}{j^k} + 2\sum_{1 \le j \le d/H} H^k\]
for $k \in \{1,2\}$. If $k=1$, this is 
\[ \le 2d\left( 1+\sum_{\max\{1,d/H\} \le j \le d/2}  \frac{1}{j}\right)\]
and we use the estimate \[\sum_{j \le n} \frac{1}{j} = \log n+O(1)\]
to conclude. For $k=2$, we use \[\sum_{j>n} \frac{1}{j^2} \ll \frac{1}{n}\]
and consider  $d \le H$ and $d>H$ separately.
\end{proof}
\begin{lem}\label{lem:bdPhibyF}
For $\vp$ supported in a compact subset of $[0,\infty)$ and of bounded variation,
\[ \Phi_H(t) \ll F_H(t),\]
where the implied constant depends on $\vp$ only.
\end{lem}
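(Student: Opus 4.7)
The plan is to establish the two bounds $|\Phi_H(t)| \ll H$ and $|\Phi_H(t)| \ll 1/\|t\|$ (the latter when $t \notin \Z$), and then combine them to get the minimum, which is $F_H(t)$.

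For the trivial bound, since $\vp$ is bounded and supported in some compact set $[0,M]$, the sum defining $\Phi_H(t)$ has at most $\lfloor MH\rfloor + 1$ nonzero terms, each bounded by $\|\vp\|_\infty$. Hence $|\Phi_H(t)| \leq (MH+1)\|\vp\|_\infty \ll_\vp H$. Note also that $F_H(t) \geq 1$ since $H \geq 1$ and $\|t\| \leq 1/2$, so any bounded constant term (such as $|\vp(0)|$ appearing below) is absorbed into $\ll F_H(t)$.

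For the $1/\|t\|$ bound, apply Abel summation. Writing $\Phi_H(t) = \vp(0) + \sum_{m=1}^{N} \vp(m/H)\, e(mt)$ where $N = \lfloor MH\rfloor + 1$ (so $\vp(N/H) = 0$), and letting $E_m(t) = \sum_{n=1}^m e(nt)$, summation by parts gives
\[
\sum_{m=1}^{N} \vp(m/H)\, e(mt) = -\sum_{m=1}^{N-1} E_m(t)\bigl(\vp((m+1)/H) - \vp(m/H)\bigr).
\]
Using the elementary bound $|E_m(t)| \leq 1/(2\|t\|)$ valid for $t \notin \Z$ and all $m$, together with the fact that $\vp$ has finite total variation $V(\vp)$ on $[0,M]$, we get
\[
\Bigl|\sum_{m=1}^{N} \vp(m/H)\, e(mt)\Bigr| \leq \frac{1}{2\|t\|}\sum_{m=1}^{N-1}|\vp((m+1)/H) - \vp(m/H)| \leq \frac{V(\vp)}{2\|t\|}.
\]
Combining with the constant $|\vp(0)|$ term yields $|\Phi_H(t)| \ll_\vp 1/\|t\|$.

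Taking the smaller of the two bounds produces $|\Phi_H(t)| \ll_\vp \min\{H, 1/\|t\|\} = F_H(t)$, as desired. There is no real obstacle here: the only mild subtlety is ensuring that the Abel summation is carried out to an index $N$ past the support of $\vp$ so that the boundary term at $N$ vanishes, and noting that the constant $|\vp(0)|$ is harmless because $F_H(t) \geq 1$ always.
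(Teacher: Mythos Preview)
Your proof is correct and follows essentially the same approach as the paper: both arguments apply Abel/partial summation and use the bound on the partial sums $E_m(t)$, with the total variation of $\vp$ controlling the sum of increments. The only cosmetic difference is that the paper bounds $|E_j(t)| \ll F_H(t)$ in one stroke (so the minimum arises automatically), whereas you establish the $H$-bound and the $1/\|t\|$-bound separately and then take the minimum; this is the same argument unpacked.
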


\begin{proof}
Suppose $\vp$ is supported on an interval $[0,c]$. By partial summation,
\[ |\Phi_H(t)| = \Big| \sum_{1 \leq j \leq cH} E_j(t)\Big( \vp\Big(\frac{j-1}{H}\Big) - \vp\Big(\frac{j}{H}\Big)\Big)\Big| \ll F_H(t) \sum_{1 \leq j \leq cH} \Big| \vp\Big(\frac{j-1}{H}\Big) - \vp\Big(\frac{j}{H}\Big) \Big|.\]
As $\vp$ is of bounded variation the claim follows.
\end{proof}

We introduce the arithmetic function
\[\tau_{B,\lcm,k}(d) := |\{d_1,\ldots,d_k \in B : [d_1,\ldots,d_k] = d\}|,\]
\begin{lem}\label{lem:tau_lem_bound}
For a sieving set $B$, for any $k \geq 1$ and any $\e > 0$,
\[\tau_{B,\lcm,k}(d) \ll_{\e} d^\e.\]
\end{lem}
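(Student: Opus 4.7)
The plan is to exploit two structural constraints on a tuple $(d_1,\ldots,d_k) \in B^k$ with $[d_1,\ldots,d_k]=d$. First, since the elements of $B$ are pairwise coprime, the lcm of any such tuple is a squarefree product of distinct elements of $B$; in particular $\tau_{B,\lcm,k}(d)=0$ unless $d \in \sgbsf$. Second, fixing $d \in \sgbsf$ with factorization $d = b_1 \cdots b_r$ into distinct elements $b_i \in B$, any $d_j \in B$ dividing $d$ must coincide with one of the $b_i$. Indeed, if $d_j$ differed from every $b_i$, then pairwise coprimality of $B$ would yield $\gcd(d_j,b_i)=1$ for all $i$, whence $\gcd(d_j,d)=1$; combined with $d_j \mid d$ this forces $d_j=1$, contradicting $d_j \in B \subset \N_{>1}$.

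With these observations, counting $\tau_{B,\lcm,k}(d)$ reduces to counting those sequences $(d_1,\ldots,d_k) \in \{b_1,\ldots,b_r\}^k$ whose underlying set is all of $\{b_1,\ldots,b_r\}$; the surjectivity condition comes from requiring the lcm to equal the full product $d$. The number of such tuples is crudely bounded by $r^k$. To convert this to an estimate in terms of $d$, I would use the trivial lower bound $b_i \geq 2$ for each factor, yielding $d \geq 2^r$ and hence $r \leq \log_2 d$. Combining these,
\[ \tau_{B,\lcm,k}(d) \leq r^k \leq (\log_2 d)^k \ll_{k,\varepsilon} d^{\varepsilon}. \]
There is no real obstacle: the argument is purely combinatorial, uses only the pairwise coprimality that is built into the definition of a sieving set, and produces a bound uniform in the choice of $B$.
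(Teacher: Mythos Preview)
Your argument is correct for the lemma as literally stated, with $d_i \in B$, and it is the same idea as the paper's: write $d = b_1 \cdots b_r$ with $b_i \in B$ distinct, bound the count combinatorially in terms of $r$, and then control $r$ by $\log d$. The paper phrases this via multiplicativity and the estimate $\omega(d) = o(\log d)$, while you use the direct inequality $r \leq \log_2 d$; for the literal statement these are equivalent and your version is slightly cleaner.

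One point worth flagging: the paper's proof bounds the local factors $\tau_{B,\lcm,k}(b^m)$ for $m \geq 2$, and every application of the lemma in the paper (Lemmas~\ref{lem:corrlarged}, \ref{lem:hn1}, \ref{lem:hn2} and Proposition~\ref{prop:diag}) uses it to control tuples with entries in $\sgbsf$, not in $B$. So the definition is almost certainly a typo for $d_i \in \sgbsf$ (or $\sgb$). Under that reading each $d_j$ is a product of a \emph{subset} of $\{b_1,\ldots,b_r\}$, and your count $r^k$ must be replaced by $(2^k-1)^r$. Then the crude bound $r \leq \log_2 d$ yields only $(2^k-1)^{\log_2 d} \ll d^k$, which is useless; one genuinely needs the sharper input $r \leq \omega(d) = o(\log d)$ that the paper invokes. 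Your argument adapts immediately once this is substituted, but it is worth being aware that the trivial $b_i \geq 2$ bound is not enough for the intended statement.
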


\begin{proof}
Note that $\tau_{B,\lcm,k}(d)$ vanishes for $d \notin \sgb$ and is multiplicative for $d \in \sgb$. If $b \in B$, then $\tau_{B,\lcm,k}(b^r) \ll r^{O_k(1)}$. But for any $\e > 0$ this implies $\tau_{B,\lcm,k}(b^r) \ll A\cdot (b^r)^\e$ for $A$ a constant depending on $\e$ and $k$ only. Thus for $d \in \sgb$, \[\tau_{B,\lcm,k}(d) \ll A^{\omega(d)} d^\e \ll d^{\e+o(1)},\] where $\omega(d)$ is the number of prime factors of $d$ and we use the estimate $\omega(d) = o(\log d)$ (see e.g. \cite[Theorem 2.10]{MNTI}). This implies the claim.
\end{proof}

The next lemma is a variation on an estimate of Nunes \cite[Lemma 2.4]{Nunes1}, who treated the corresponding result when $B = \{p^m : p \text{ prime}\}$ with $m \geq 2$. 

\begin{lem}\label{lem:corrlarged}
Suppose the sequence $\sgb$ has index $\alpha \in (0,1)$. Let $x>1$ and $m_1,\ldots,m_k$ be a $k$-tuple of distinct non-negative integers and let $X=\max_{1 \le i \le k}(x+m_i)$. We have
\begin{equation}\label{eq:largeprod} \sum_{\substack{d_1,\ldots,d_k \in \sgbsf\\ [d_1,\ldots,d_k] > z }} \; \sum_{n\leq x} \prod_{i=1}^k \Big( \mathbf{1}_{n \equiv - m_i \; (d_i)} + \frac{1}{d_i}\Big) \ll X^{o(1)} \left( X z^{\alpha-1+o(1)} + X^{\frac{2\alpha}{\alpha+1}} \right).
\end{equation}
\end{lem}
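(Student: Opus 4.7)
The plan is to expand the product by multilinearity,
\[
\prod_{i=1}^k \Big(\mathbf{1}_{n \equiv -m_i \;(d_i)} + \frac{1}{d_i}\Big) = \sum_{S \subseteq \{1,\ldots,k\}} \prod_{i \in S} \mathbf{1}_{n \equiv -m_i \;(d_i)} \prod_{i \notin S} \frac{1}{d_i},
\]
swap summation orders, and handle each $S$ separately. For fixed $S$, the inner sum $\sum_{n \leq x} \prod_{i \in S} \mathbf{1}_{n \equiv -m_i(d_i)}$ vanishes unless the congruences are simultaneously solvable (which requires $(d_i, d_j) \mid m_i - m_j$ for every $i, j \in S$) and each $d_i$ with $i \in S$ satisfies $d_i \leq X$; in that case it is bounded by $x/[d_i : i \in S] + 1$.

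The leading term (from $x/[d_i:i \in S]$) is handled uniformly in $S$ via the elementary inequality $[d_i : i \in S] \cdot \prod_{i \notin S} d_i \geq [d_1, \ldots, d_k]$, which gives the pointwise estimate $\prod_{i \notin S}\tfrac{1}{d_i} \cdot \tfrac{x}{[d_i : i \in S]} \leq \tfrac{x}{[d_1, \ldots, d_k]}$. Invoking Lemma~\ref{lem:tau_lem_bound} to bound the number of tuples with fixed LCM $D \in \sgbsf$ by $D^{o(1)}$, and combining this with partial summation and the index hypothesis $N_{\sgb}(t) = t^{\alpha + o(1)}$, I get $\sum_{D > z,\, D \in \sgbsf} D^{-1+o(1)} \ll z^{\alpha-1+o(1)}$, and therefore the leading term is $\ll X z^{\alpha - 1 + o(1)}$, accounting for the first part of the claimed bound.

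The $+1$ error contribution is the bulk of the work. Summing the weight $\prod_{i \notin S} 1/d_i$ over $d_i \in \sgbsf$ (using the convergence $\sum_{d \in \sgbsf} 1/d = \prod_{b \in B}(1 + 1/b) < \infty$ while carefully handling the coupling imposed by $[d_1, \ldots, d_k] > z$), the problem reduces to counting tuples $(d_i)_{i \in S} \in \sgbsf^{|S|}$ with $d_i \leq X$, subject to the pairwise compatibility $(d_i, d_j) \mid m_i - m_j$ and the induced LCM lower bound. To extract the sharp exponent $\frac{2\alpha}{\alpha+1}$, I would perform a dyadic decomposition in the sizes of the $d_i$ with threshold $Y = X^{1/(\alpha+1)}$: the small regime ($d_i \leq Y$ for all $i \in S$) contributes at most $N_{\sgbsf}(Y)^{|S|} \ll Y^{|S|\alpha+o(1)}$ tuples; in the large regime ($d_i > Y$ for some $i$), each common divisor $(d_i, d_j)$ is forced into the divisor set of the fixed nonzero integer $m_i - m_j$ of magnitude $\ll X$ (which has size $X^{o(1)}$), and this compatibility restriction, combined with the LCM constraint, cuts down the effective number of free parameters enough to match the small-regime bound. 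This combinatorial optimization, generalizing \cite[Lemma 2.4]{Nunes1} beyond the squarefree case, is the main technical obstacle.
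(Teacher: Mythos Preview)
Your treatment of the main term $x/[d_i:i\in S]$ is fine, but the ``$+1$'' error analysis has a genuine gap that cannot be repaired along the lines you sketch.

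Consider already the case $|S|=2$, say $S=\{1,2\}$. After dropping the constraint $[d_1,\ldots,d_k]>z$ (which you must, since the target term $X^{2\alpha/(\alpha+1)}$ is $z$-independent) and summing out the convergent factors $1/d_i$ for $i\notin S$, your ``$+1$'' contribution is bounded below by
\[
\bigl|\{(d_1,d_2)\in\sgbsf^2 : d_i\le X,\ (d_1,d_2)\mid m_1-m_2\}\bigr|.
\]
But the compatibility condition $(d_1,d_2)\mid m_1-m_2$ is \emph{automatically} satisfied whenever $(d_1,d_2)=1$, and the number of coprime pairs in $\sgbsf\cap[1,X]$ is $\asymp X^{2\alpha+o(1)}$. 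Since $2\alpha>\tfrac{2\alpha}{\alpha+1}$ for every $\alpha\in(0,1)$, this already exceeds the target. Your dyadic split at $Y=X^{1/(\alpha+1)}$ does not help: in the large regime the same coprime pairs with $d_1>Y$ still number $\asymp X^{2\alpha+o(1)}$, and in the small regime your stated bound $Y^{|S|\alpha}$ is itself too large once $|S|\ge 3$. The divisor-set restriction on $(d_i,d_j)$ is simply too weak, because the ``trivial'' divisor $1$ is always available.

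The paper avoids this by \emph{not} expanding the product multilinearly. It proceeds by induction on $k$: one introduces a threshold $z'$ and splits according to whether all the sub-LCMs $D_j=[d_i:i\ne j]$ are $\le z'$. When they are, the elementary inequality $D\le\prod_j D_j^{1/(k-1)}$ forces $D\le (z')^{k/(k-1)}$, so the ``$+1$'' in $X/D+1$ is summed only over $D$ up to $(z')^{k/(k-1)}$, contributing $\ll (z')^{2\alpha+o(1)}$. When some $D_j>z'$, one pulls out the $j$-th factor (its free sum over $d_j$ is $X^{o(1)}$) and invokes the inductive hypothesis for $k-1$ variables with threshold $z'$. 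Choosing $z'=X^{1/(\alpha+1)}$ balances the two pieces at $X^{2\alpha/(\alpha+1)}$. The key point is that the upper bound on $D$ comes from the sub-LCM structure, not from any compatibility of congruences---your expansion discards precisely this structure.
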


\begin{proof}
We prove the claim by induction on $k$. For $k=1$, the inner sum in the left-hand side of \eqref{eq:largeprod} is $\ll X/d_1$ (by considering $X> d_1$ and $X\le d_1$ separately), so we obtain the bound
\[ \sum_{\substack{d_1 \in \sgbsf\\ d_1 > z}} \sum_{n \le x} \left(\mathbf{1}_{n \equiv -m_1 \;(d_1)} + \frac{1}{d_1}\right)\ll
\sum_{\substack{d_1 \in \sgbsf\\ d_1>z}} \frac{X}{d_1} \ll X z^{\alpha-1+o(1)},\] 
which is stronger than what is needed. We now assume that the bound holds for $k_0$ and prove it for $k=k_0+1$.

We set $D= [d_1, d_2, \ldots, d_{k_0+1}]$ and $D_i = [d_1,\ldots,d_{i-1},d_{i+1},\ldots,d_k]$ and introduce a parameter $z' \le X$ to be chosen later. We write the left-hand side of \eqref{eq:largeprod} as $T_1 +T_2$, where in $T_1$ we sum only over tuples  $d_1,\ldots,d_{k_0+1}$ with $D_i\le z'$ holding for all $1 \le i \le k_0 + 1$, and in $T_2$ we sum over the rest. 

Observe that by comparing exponents in the factorizations, $D \leq \prod_{i=1}^{k_0+1} (D_i)^{1/k_0}$. To bound $T_1$, observe that the condition on $D_i$ implies $D  \le (z')^{1+1/k_0}$. Using the Chinese remainder theorem, the inner sum for $T_1$ is $\ll X/D + 1$, and so we obtain the upper bound
\[ T_1 \ll \sum_{\substack{d_1, d_2,\ldots, d_{k_0+1} \in \sgbsf\\ z < D \le (z')^{1+1/k_0}}} \left( \frac{X}{D}+1\right) \le \sum_{\substack{d \in \sgbsf\\ z < d \le (z')^{1+1/k_0}}} \tau_{B,\lcm,k_0+1}(d) \left( \frac{X}{d}+1\right)\ll X^{o(1)} \left( Xz^{\alpha-1+o(1)} + (z')^{2\alpha}\right),\]
where we have used Lemma \ref{lem:tau_lem_bound} in the last step.

For each of the tuples summed over in $T_2$, there is some $j$ ($1 \le j \le k_0 + 1$) with $D_j > z'$. The tuples corresponding to this $j$ contribute to $T_2$ at most
\begin{multline*} \sum_{\substack{d_1,\ldots,d_{j-1},d_{j+1},\ldots,d_{k_0+1} \in \sgbsf \\ D_j > z' }} \sum_{n \leq x} \prod_{i \neq j}\Big( \mathbf{1}_{n \equiv - m_i \; (d_i)} + \frac{1}{d_i}\Big) \sum_{d_j \in \sgbsf} \Big( \mathbf{1}_{n \equiv - m_j \; (d_j)} + \frac{1}{d_j}\Big) \\
\ll X^{o(1)} \sum_{\substack{d_1,\ldots,d_{j-1},d_{j+1},\ldots,d_{k_0+1} \in \sgbsf\\ D_j > z' }} \sum_{n \leq x} \prod_{i \neq j}\Big( \mathbf{1}_{n \equiv - m_i \; (d_i)} + \frac{1}{d_i}\Big),
\end{multline*}
where we used the facts that (i) the number of $d_j \mid n+m_j$ will be $\ll X^{o(1)}$, and (ii) $\sum 1/d_j = O(1)$.

Applying the induction hypothesis to bound the last double sum, we obtain
\[ T_2 \ll X^{o(1)}\left( X (z')^{\alpha-1+o(1)}  + X^{\frac{2\alpha}{\alpha+1}} \right).\]
Taking $z'=X^{1/(\alpha+1)}$, we see that $T_1+T_2$
does not exceed the desired bound. 
\end{proof}

Finally, throughout this paper in order to control the inner sums defining $C_k(H;\vp)$ we will use the Fundamental Lemma of Montgomery and Vaughan \cite{MV}. The following is a generalization of the result proved in \cite{MV}, which corresponds to the case of $B$ being the set of primes. The original proof in \cite{MV} works without any change under the more general assumptions. 
Let 
\[ \mc{C}(r) := \left\{ \frac{a}{r} : 1 \le a \le r \right\} / \Z.\]
\begin{lem}[Montgomery and Vaughan's Fundamental Lemma]\label{lem:flgen}
	Let $r_1$, $r_2$, \ldots, $r_k$ be positive integers from $\sgbsf$, and set $r=[r_1,\ldots,r_k]$. For each $1 \le i \le k$, let $G_i(\rho_i)$ be a complex-valued function defined on $\mc{C}(r_i)$. Suppose each prime factor of $r$ divides at least two of the $r_i$.\footnote{Equivalently, each $b \in B$ that divides $r$ divides at least two of the $r_i$.} Then
	\[ \left| \sum_{\substack{\rho_i \in \mc{C}(r_i)\\\sum_{i=1}^{k} \rho_i\equiv  0 \bmod 1}} \prod_{i=1}^{k} G_i(\rho_i) \right| \le \frac{1}{r}\prod_{i=1}^{k} \left(r_i \sum_{\rho_i \in \mc{C}(r_i)} |G_i(\rho_i)|^2\right)^{\frac{1}{2}}.\]
\end{lem}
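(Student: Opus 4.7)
The plan is to induct on $\omega_B(r) := |\{b \in B : b \mid r\}|$, peeling off one element of $B$ at a time via the Chinese Remainder Theorem. The base case $\omega_B(r)=0$ will be trivial: $r=1$ forces every $\mc{C}(r_i)=\{0\}$, so both sides reduce to $|\prod_i G_i(0)|$.

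For the inductive step, I would fix some $b\in B$ dividing $r$ and set $J:=\{i:b\mid r_i\}$, so that $|J|\ge 2$ by hypothesis. Since each $r_i\in\sgbsf$ gives $\epsilon_i:=v_b(r_i)\in\{0,1\}$, the CRT decomposition $\mc{C}(r_i)\cong \mc{C}(b^{\epsilon_i})\oplus \mc{C}(r_i/b^{\epsilon_i})$ lets me write $\rho_i=\rho_i^{(b)}+\rho_i^{(b')}$. The constraint $\sum_i\rho_i\equiv 0\bmod 1$ then decouples into a $b$-part and a $b$-coprime part, and I would apply orthogonality to the $b$-part to get $S=\tfrac{1}{b}\sum_{m=0}^{b-1}T_m$, where
\[
T_m = \sum_{\substack{\rho_i^{(b')}\\\sum_i\rho_i^{(b')}\equiv 0 \bmod 1}} \prod_i G_{i,m}(\rho_i^{(b')}),\qquad G_{i,m}(\rho_i^{(b')}):=\sum_{\rho_i^{(b)}} G_i(\rho_i^{(b)}+\rho_i^{(b')})\,e(m\rho_i^{(b)}).
\]
The hypothesis transfers to the reduced moduli $\{r_i/b^{\epsilon_i}\}$ (whose lcm is $r/b$, with $\omega_B$ reduced by one), so the inductive hypothesis will bound $|T_m|$ by $\tfrac{b}{r}\prod_i\bigl((r_i/b^{\epsilon_i})\|G_{i,m}\|_2^2\bigr)^{1/2}$.

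The final step will be to sum over $m$. After noting that $G_{i,m}=G_i$ for $i\notin J$ and pulling out the $m$-independent factors, the task reduces to bounding $\sum_{m=0}^{b-1}\prod_{i\in J}\|G_{i,m}\|_2$ by $b^{|J|/2}\prod_{i\in J}\|G_i\|_2$. I plan to do this by applying Cauchy--Schwarz to two chosen indices $i_1,i_2\in J$ (available because $|J|\ge 2$), invoking Parseval in the $b$-variable via $\sum_{m=0}^{b-1}\|G_{i,m}\|_2^2=b\|G_i\|_2^2$, and controlling the remaining $|J|-2$ factors by the crude uniform bound $\|G_{i,m}\|_2\le \sqrt{b}\|G_i\|_2$. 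The powers of $b$ should then combine as $b\cdot b^{(|J|-2)/2}=b^{|J|/2}$, exactly cancelling the $b^{-|J|/2}$ coming from $\prod_{i\in J}(r_i/b)^{1/2}$.

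The hardest part will be this factor balancing at the end. The hypothesis $|J|\ge 2$ is indispensable: it supplies exactly the two $m$-dependent factors needed for Cauchy--Schwarz combined with Parseval to recover a full factor of $b$, not merely $\sqrt{b}$. Relaxing the hypothesis so that some $b\in B$ divides only one $r_i$ would lose $\sqrt{b}$ per induction step, and the lemma would fail.
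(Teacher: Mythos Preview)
Your proposal is correct and is precisely the Montgomery--Vaughan argument: induct on the number of $b\in B$ dividing $r$, split off one such $b$ via CRT, detect the $b$-part of the congruence by orthogonality, apply the inductive hypothesis at level $r/b$, and then recombine using Cauchy--Schwarz on two of the $|J|\ge 2$ indices together with Parseval in the $b$-variable and the pointwise bound $\|G_{i,m}\|_2\le\sqrt{b}\|G_i\|_2$ on the rest. The paper itself gives no proof: it simply states that the original argument in \cite{MV} (for $B$ the set of primes) carries over without change to general $B$, so your write-up is in fact supplying the details the paper omits rather than offering an alternative route.
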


Later, in Lemmas \ref{lem:msgen} and \ref{lem:lem7gen}, we will cite variants of this result.

\subsection{Proof of Proposition \ref{prop:general_limiting_moments}}

\begin{proof}
We examine the inner sum defining $M_k(X,H;\vp)$. Note for integers $n$,
\begin{align*}
\sum_{u \in \Z} \vp\Big(\frac{u-n}{H}\Big) \bfree(u) - \meanb \sum_{h \in \Z} \vp\left(\frac{h}{H}\right) &= \sum_{d \in \sgbsf} \sum_{v \in \Z} \vp\Big(\frac{vd - n}{H}\Big) \mobb(d) - \sum_{d \in \sgbsf} \frac{\mobb(d)}{d} \sum_{h \in \Z} \vp\left(\frac{h}{H}\right) \\
&= \sum_{d \in \sgbsf} \mobb(d) \psi_H(n,d),
\end{align*}
where for notational reasons we have written 
\begin{equation} \label{eq:psi_def}
\psi_H(n,d) := \sum_{m \in \Z} \vp\Big(\frac{m}{H}\Big) \Big(\mathbf{1}_{m \equiv - n\; (d)} - \frac{1}{d}\Big).
\end{equation}
The function $n\mapsto \psi_H(n,d)$ has period $d$. Considering it as a function on $\Z/d\Z$ it has mean $0$. By taking the finite Fourier expansion in $n$ we have
\begin{equation}\label{eq:psi_to_Phi}
\psi_H(n,d) = \frac{1}{d} \sum_{\ell=1}^{d-1} \Phi_H\left(\frac{\ell}{d}\right) e\left(\frac{n\ell}{d}\right).
\end{equation}
Thus
\begin{equation}\label{eq:moment_to_psi}
M_k(X, H; \vp) = \frac{1}{X} \sum_{n \leq X} \sum_{d_1, \ldots , d_k \in \sgbsf} \prod_{j=1}^k \mobb(d_j) \psi_H(n,d_j).
\end{equation}
From the definition \eqref{eq:psi_def}, each term $\psi_H(n,d)$ involves summing over $\ll H$ indices $m$. Thus from Lemma \ref{lem:corrlarged} we have
\begin{equation}\label{eq:large_lcd_bound}
\frac{1}{X} \sum_{n \leq X} \sum_{\substack{d_1,\ldots, d_k \in \sgbsf\\ [d_1,\ldots,d_k] > z}} \prod_{j=1}^k \mobb(d_j) \psi_H(n,d_j) \ll \frac{H^k}{X} \cdot X^{o(1)}( X z^{\alpha-1+o(1)} + X^{\frac{2\alpha}{\alpha+1}}),
\end{equation}
for a parameter $z$ to be chosen later.

On the other hand, by \eqref{eq:psi_to_Phi},
\begin{multline}\label{eq:small_lcd_transform} 
\frac{1}{X} \sum_{n \leq X} \sum_{\substack{d_1,\ldots, d_k \in \sgbsf\\ [d_1,\ldots,d_k] \leq z}} \prod_{j=1}^k \mobb(d_j) \psi_H(n,d_j) \\
= \sum_{\substack{d_1,\ldots, d_k \in \sgbsf\\ [d_1,\ldots,d_k] \leq z}} \frac{\mobb(d_1) \cdots \mobb(d_k)}{d_1\cdots d_k} \sum_{\ss{0 < \ell_j < d_j \\ \forall 1 \leq j \leq k}} \Phi_H\Big(\frac{\ell_1}{d_1}\Big)\cdots \Phi_H\Big(\frac{\ell_1}{d_1}\Big) \Big( \frac{1}{X} E_X\Big( \frac{\ell_1}{d_1} + \cdots +\frac{\ell_k}{d_k}\Big)\Big),
\end{multline}
where $E_X$ is defined as in \eqref{eq:EHDef}. Note that if $\ell_1/d_1 + \cdots+ \ell_k/d_k \notin \Z$, we have
\[
\left\| \frac{\ell_1}{d_1} + \cdots +\frac{\ell_k}{d_k} \right\| \geq \frac{1}{[d_1,\ldots,d_k]}
\]
and in this latter case
\[
\frac{1}{X} E_X\Big( \frac{\ell_1}{d_1} + \cdots + \frac{\ell_k}{d_k}\Big) \ll \frac{[d_1,\ldots,d_k]}{X}.
\]
Furthermore, note using Lemma \ref{lem:bdPhibyF} and the first part of Lemma \ref{lem:FHmoments} that
\[
\sum_{0 < \ell < d} \Phi_H\left(\frac{\ell}{d}\right) \ll \sum_{0 < \ell < d} F_H\left(\frac{\ell}{d}\right) \ll d \log H.
\]
Thus the contribution of terms for which $\ell_1/d_1 + \cdots + \ell_k/d_k \notin \Z$ is
\[
\ll H^{o(1)} \sum_{\substack{d_1,\ldots, d_k \in \sgbsf \\ [d_1,\ldots,d_k] \leq z}} \frac{[d_1,\ldots,d_k]}{X} \leq \frac{H^{o(1)}}{X} \sum_{\ss{d \in \sgbsf \\ d \leq z}} \tau_{B,\lcm,k}(d) d = \frac{H^{o(1)} z^{1+\alpha + o(1)}}{X}.
\]
Thus \eqref{eq:small_lcd_transform} is
\begin{equation}\label{eq:psi_to_main}
\sum_{\substack{d_1,\ldots, d_k \in \sgbsf\\ [d_1,\ldots,d_k] \leq z}} \frac{\mobb(d_1) \cdots \mobb(d_k)}{d_1\cdots d_k} \sum_{\ss{0 < \ell_j < d_j \, \forall j \\ \sum \ell_i/d_i \in \Z}} \Phi_H\Big(\frac{\ell_1}{d_1}\Big)\cdots \Phi_H\Big(\frac{\ell_1}{d_1}\Big) + O\left(\frac{H^{o(1)} z^{1+\alpha+o(1)}}{X}\right).
\end{equation}
We now complete the sum above. Directly applying Lemma \ref{lem:flgen} (and appealing to Lemma \ref{lem:bdPhibyF} and the second part of Lemma \ref{lem:FHmoments}), we see that the corresponding sum over tuples \[\mbf{d} = (d_1,\ldots,d_k)\text{ with }[d_1,\ldots,d_k] > z\]
is
\begin{equation}\label{eq:main_completion}
\ll H^{\frac{k}{2}} \sum_{\substack{d_1,\ldots, d_k \in \sgbsf\\ [d_1,\ldots,d_k] > z}} \frac{1}{[d_1,\ldots,d_k]} \ll H^{\frac{k}{2}} z^{\alpha-1+o(1)}.
\end{equation}
Hence from \eqref{eq:large_lcd_bound}, \eqref{eq:psi_to_main}, \eqref{eq:main_completion},
\begin{multline*}
M_k(X,H;\vp) = C_k^\ast(H;\vp)
+ O(H^k z^{\alpha-1+o(1)} X^{o(1)} + H^k X^{\frac{\alpha-1}{\alpha+1} + o(1)} + H^{o(1)} z^{1+\alpha+o(1)} X^{-1}),
\end{multline*}
where
\begin{equation}\label{eq:C_k_alt}
C_k^\ast(H;\vp) = \sum_{d_1,\ldots, d_k \in \sgbsf} \frac{\mobb(d_1) \cdots \mobb(d_k)}{d_1\cdots d_k} \sum_{\ss{0 < \ell_j < d_j \, \forall j \\ \sum \ell_i/d_i \in \Z}} \Phi_H\Big(\frac{\ell_1}{d_1}\Big)\cdots \Phi_H\Big(\frac{\ell_1}{d_1}\Big).
\end{equation}
(Note that the absolute convergence of this sum is implied by the above derivation.) Setting $z = X^{(1-\e)/(\alpha+1)}$ we obtain the desired error term.

It remains to demonstrate $C_k^\ast(H;\vp) = C_k(H;\vp)$. To do this, note that if $\sg_i = \ell_i/d_i$, with $0 < \ell_i < d_i$,  we can find a maximal $e_i \in \sgbsf$ such that $\ell_i = e_i l_i'$, $d_i = e_i d_i'$, and so that $(l_i',d_i')$ is $B$-free. Moreover, writing $\sg_i = l_i'/d_i'$ does not affect the condition $\sg_1 + \cdots + \sg_k \in \mb{Z}$. Consequently, we have
\[
\sum_{\ss{0 < \ell_j < d_j \forall j \\ \sum \ell_i/d_i \in \mb{Z}}} \Phi_H\left(\frac{\ell_1}{d_1}\right) \cdots \Phi_H\left(\frac{\ell_k}{d_k}\right) = \sum_{\substack{e_1\mid d_1\\e_1 \in \sgbsf}} \cdots \sum_{\substack{e_k \mid d_k\\e_k \in \sgbsf}} \sum_{\ss{\sg_1', \ldots, \sg_k' \\ \sg_j' \in \mc{R}_B(d_j/e_j) \, \forall j \\ \sg_1' + \cdots + \sg_k' \in \mb{Z}}} \Phi_H(\sg_1')\cdots \Phi_H(\sg_k').
\]
Note also that as $\ell_i \neq 0$, $d_i/e_i \neq 1$. Therefore, setting $r_j := d_j/e_j$ in each of the above sums, we have
\begin{align*}
C_k^\ast(H;\vp)  = \sum_{\ss{r_1,\ldots,r_k > 1 \\ r_j \in \sgbsf \, \forall j}} \frac{\mobb(r_1)\cdots \mobb(r_k)}{r_1\cdots r_k} \left(\sum_{\ss{e_1,\ldots,e_k \geq 1 \\ e_j \in \sgbsf \, \forall j \\ (e_j,r_j) \text{ is $B$-free}}} \frac{\mobb(e_1)\cdots \mobb(e_k)}{e_1\cdots e_k}\right) \sum_{\ss{\sg_1,\ldots,\sg_k \\ \sg_j \in \mc{R}_B(r_j) \\ \sg_1 + \cdots + \sg_k \in \mb{Z}}} \Phi_H(\sg_1) \cdots \Phi_H(\sg_k).
\end{align*}
The sums over $e_j$ factor as
\[
\prod_{j =1}^k \prod_{\ss{b \nmid r_j \\ b \in B}} \left(1-\frac{1}{b}\right),
\]
and we see $C_k^\ast(H;\vp) = C_k(H;\vp)$ as required.
\end{proof}

\begin{rem}\label{rem:C_k_alt}
From \eqref{eq:C_k_alt}, reindexing $\ell_1' = \ell_1$ and $\ell_2' = d_2 - \ell_2$, we have the following useful alternative expression for $C_2(H;\vp)$:
\begin{equation}\label{eq:C_2_complete}
C_2(H;\vp) = \sum_{d_1, d_2 \in \sgbsf} \frac{\mobb(d_1)\mobb(d_2)}{d_1 d_2} \sum_{\ss{0 < \ell_1' < d_1 \\ 0 < \ell_2' < d_2 \\ \ell_1'/d_1 = \ell_2'/d_2 }} \Phi_H\Big(\frac{\ell_1'}{d_1}\Big)\Phi_H\Big(-\frac{\ell_2'}{d_2}\Big).
\end{equation}
The absolute convergence of this sum is implied by the above argument. 
\end{rem}

\begin{rem}
When $\vp = \intervalind$ the corresponding expression for $C_2(H;\vp)$ may also be calculated using correlation formulae for $\mobb$. Indeed, upon expanding the $k$-th power in the definition of $M_k(X,H;\intervalind$ and swapping orders of summation, we find
\[
M_k(X,H;\intervalind) = \sum_{0 \leq j \leq k} \binom{k}{j} (- \mc{M}_BH)^{k-j} \sum_{0 \leq h_1,\ldots,h_j \leq H} \frac{1}{X} \sum_{n \leq X} \bfree(n+h_1)\cdots \bfree(n+h_j).
\]
When $B = \{p^2 : p \text{ prime}\}$, Hall \cite[Lemma~2]{Hall2} used this approach to compute the main terms $C_k(H; \intervalind)$. This was generalized straightforwardly to the setting with $B = \{p^m : p \text{ prime}\}$ for $m \geq 3$ by Nunes \cite[Lemma~2.2]{Nunes1} and can be generalized to $B$-frees as well. 
\end{rem}

\section{Estimates for variance}

\subsection{Preliminary results on the index of $B$, $\sgbsf$, and $\sgb$}

In this section we will estimate the variance $C_2(H)$ in various ways. We first prove some preliminary results relating the index of $\sgb$ to $\sgbsf$ and $B$ itself.

\begin{lem}\label{lem:index_sgb_to_sgbsf}
For a sieving set $B$, the sequence $\sgb$ has index $\alpha$ if and only if $\sgbsf$ has index $\alpha.$
\end{lem}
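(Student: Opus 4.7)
My plan is to combine a counting identity coming from a unique factorization with an abscissa-of-convergence argument. Since the elements of $B$ are pairwise coprime, every $n \in \sgb$ admits a unique decomposition $n = n_1 n_2^2$ with $n_1 \in \sgbsf$ and $n_2 \in \sgb$ (the exponent $e_b$ of each $b \in B$ in $n$ splits uniquely as $e_b = \epsilon_b + 2f_b$ with $\epsilon_b \in \{0,1\}$), yielding the identity
\[
N_{\sgb}(x) = \sum_{\substack{n_2 \in \sgb \\ n_2 \leq \sqrt{x}}} N_{\sgbsf}(x/n_2^2).
\]
At the same time, the Euler products $\zeta_{\sgb}(s) = \prod_{b \in B}(1-b^{-s})^{-1}$ and $\zeta_{\sgbsf}(s) = \prod_{b \in B}(1+b^{-s})$ both converge precisely when $\sum_{b \in B} b^{-s} < \infty$, so they share a common abscissa of convergence. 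By a classical theorem of Landau for Dirichlet series with non-negative coefficients, this common abscissa equals both $\limsup_{x \to \infty} \log N_{\sgb}(x)/\log x$ and $\limsup_{x \to \infty} \log N_{\sgbsf}(x)/\log x$, so these two $\limsup$s coincide for free.

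Suppose first that $\sgb$ has index $\alpha$. The observation above gives $\limsup \log N_{\sgbsf}(x)/\log x = \alpha$, and the inclusion $\sgbsf \subseteq \sgb$ delivers $N_{\sgbsf}(x) \leq N_{\sgb}(x) = x^{\alpha+o(1)}$. For the matching lower bound I would truncate the counting identity at a parameter $y$, writing
\[
N_{\sgb}(x) \leq N_{\sgbsf}(x)\, N_{\sgb}(y) + \sum_{\substack{n_2 \in \sgb \\ n_2 > y}} N_{\sgb}(x/n_2^2),
\]
and bound the tail via partial summation together with $N_{\sgb}(t) \ll t^{\alpha+\e}$, obtaining $\ll (x/y)^{\alpha+\e}$. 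Choosing $y = x^\delta$ for a sufficiently small $\delta > 0$ makes this tail bounded by $\tfrac12 N_{\sgb}(x)$ for large $x$, so
\[
N_{\sgbsf}(x) \gg \frac{N_{\sgb}(x)}{N_{\sgb}(y)} \geq x^{\alpha - \e - \delta(\alpha+\e)},
\]
and sending $\e,\delta \to 0$ gives $N_{\sgbsf}(x) \geq x^{\alpha-o(1)}$.

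Conversely, if $\sgbsf$ has index $\alpha$ then $N_{\sgb}(x) \geq N_{\sgbsf}(x) \geq x^{\alpha-o(1)}$ is immediate, while the shared-abscissa observation forces $N_{\sgb}(x) \leq x^{\alpha+o(1)}$. The main obstacle is the truncation step in the forward direction: the abscissa argument on its own only controls $\limsup$s, whereas the index hypothesis is two-sided, so recovering the full $x^{\alpha+o(1)}$ statement for $N_{\sgbsf}$ forces one to extract matching $\liminf$ information directly from the counting identity, carefully balancing the truncation parameter $y$ against both the tail estimate and the a priori lower bound $N_{\sgb}(x) \gg x^{\alpha-\e}$.
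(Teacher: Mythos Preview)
Your proof is correct and the forward direction is essentially identical to the paper's: both use the factorization identity $N_{\sgb}(x) = \sum_{n_2 \in \sgb} N_{\sgbsf}(x/n_2^2)$, truncate at a parameter $y$, bound the tail by $N_{\sgb}$, and use $N_{\sgb}(y)$ to control the head. One small wording issue: to make the tail $\ll (x/y)^{\alpha+\e}$ smaller than $\tfrac12 N_{\sgb}(x) \gg x^{\alpha-\e}$ you must fix $\delta$ first and then take $\e$ small relative to $\delta$, not the other way around as you phrase it. Also, the formula $\sigma_c = \limsup \log N(x)/\log x$ is Cahen's, not Landau's.

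The converse direction is where you genuinely diverge. The paper reuses the counting identity, bounding $N_{\sgb}(x) \le \sum_{b \in \sgb} N_{\sgbsf}(x/b^2) \ll x^{\alpha+\e} \sum_{b \in \sgb} b^{-2(\alpha+\e)}$ and invoking the convergence of that last series. You instead observe that $\zeta_{\sgb}(s)$ and $\zeta_{\sgbsf}(s)$ share an abscissa of convergence (both being governed by $\sum_{b \in B} b^{-s}$), and read off the upper bound $N_{\sgb}(x) \le x^{\alpha+o(1)}$ directly from Cahen's formula. In fact your route makes explicit what is tacit in the paper: the convergence of $\sum_{b \in \sgb} b^{-2(\alpha+\e)}$ used there is itself only justified via the Euler product (one needs $\sigma_c(\sgb) \le \alpha$, which follows from $N_B(x) \le N_{\sgbsf}(x) \le x^{\alpha+o(1)}$ and the product formula). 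So your abscissa argument is a cleaner packaging of the same underlying input, and your diagnosis that it handles only the $\limsup$ --- forcing the counting identity to do the $\liminf$ work --- is exactly right.
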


\begin{proof}
Suppose $\sgb$ has index $\alpha$. We first show that $\sgbsf$ has index $\alpha$ also. Plainly,\[N_{\sgbsf}(x) \leq N_{\sgb}(x) = x^{\alpha+o(1)}.\] On the other hand,
\begin{equation}\label{eq:N_sgb_to_N_sgbsf}
\sum_{ b \leq x, b \in \sgb} N_{\sgbsf}\left(\frac{x}{b^2}\right) = N_{\sgb}(x).
\end{equation}
Introducing a parameter $y = x^\e$, we upper-bound the left-hand sum as follows. For $b \geq y$ with $b \in \sgb$ we apply $N_{\sgbsf}(x/b^2) \leq N_{\sgb}(x/b^2)$, and for $b < y$ we use monotonicity in the form $N_{\sgbsf}(x/b^2) \leq N_{\sgbsf}(x)$. Thus, 
\[
N_{\sgb}(y) N_{\sgbsf}(x) \geq N_{\sgb}(x) - \sum_{b \geq y, b \in \sgb} N_{\sgb}\left(\frac{x}{b^2}\right) \geq x^{\alpha+o(1)},
\]
so $N_{\sgbsf}(x) \geq x^{(1-\e)\alpha + o_\e(1)}$. Since this holds for every $\e > 0$, we obtain the lower bound needed for the claim.

In the converse direction, if $\sgbsf$ has index $\alpha$, then \[N_{\sgb}(x) \geq N_{\sgbsf}(x) = x^{\alpha+o(1)}.\] To prove an upper bound for $N_{\sgb}(x)$, use \eqref{eq:N_sgb_to_N_sgbsf} and note the left-hand side will be $\ll x^{\alpha+o(1)}$ since $\sum_{b \in \sgb} b^{-2\alpha} < \infty$.
\end{proof}

Other authors have proved results in this area based on assumptions regarding the index of the set $B$ (for instance \cite{Grimmett1997}). Though we will not require it in what follows, for completeness' sake we note the following implication.

\begin{prop}\label{prop:index_b_to_sgb}
For a sieving set $B$, if $B$ has index $\alpha$, then $\sgb$ also has index $\alpha.$
\end{prop}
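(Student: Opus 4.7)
The plan is to prove the two inequalities $N_{\sgb}(x) \geq x^{\alpha+o(1)}$ and $N_{\sgb}(x) \leq x^{\alpha+o(1)}$ separately; the lower bound is trivial since $B \subset \sgb$, and the upper bound will follow from a Dirichlet series argument that exploits pairwise coprimality.

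For the upper bound, I would first observe that because $B$ consists of pairwise coprime integers, every element of $\sgb$ has a unique representation as $\prod_{b \in B} b^{e_b}$ with finitely many $e_b \geq 1$. Consequently we have the formal identity
\[
Z(s) := \sum_{n \in \sgb} n^{-s} = \prod_{b \in B} \frac{1}{1-b^{-s}}.
\]
Next I would show that $Z(s) < \infty$ for every real $s > \alpha$. By the assumption $N_B(x) = x^{\alpha+o(1)}$, for any $\e > 0$ we have $N_B(t) \ll_\e t^{\alpha+\e}$, and partial summation yields
\[
\sum_{b \in B} b^{-s} = s\int_1^\infty t^{-s-1} N_B(t)\,dt \ll_\e s\int_1^\infty t^{\alpha+\e-s-1}\,dt < \infty
\]
whenever $s > \alpha + \e$. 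Since $\e$ is arbitrary this covers all $s > \alpha$, and then $\log Z(s) = -\sum_{b\in B}\log(1-b^{-s}) \ll \sum_{b\in B} b^{-s} < \infty$ gives convergence of the Euler product.

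With $Z(s)$ finite for each $s > \alpha$, the trivial bound
\[
Z(s) \geq \sum_{\substack{n \in \sgb \\ n \leq x}} n^{-s} \geq x^{-s} N_{\sgb}(x)
\]
gives $N_{\sgb}(x) \leq Z(s)\, x^s$. Taking $s = \alpha + \e$ for any $\e > 0$, this produces $N_{\sgb}(x) \ll_\e x^{\alpha+\e}$, which combined with $N_{\sgb}(x) \geq N_B(x) = x^{\alpha+o(1)}$ yields $N_{\sgb}(x) = x^{\alpha+o(1)}$, as desired.

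There is no genuine obstacle here; the key insight is just that pairwise coprimality of $B$ converts the counting problem over the multiplicative semigroup into the convergence of a single sum $\sum_{b\in B} b^{-s}$, to which the hypothesis on $N_B$ applies directly via partial summation. The argument only uses the assumption on $B$ through this convergence, and in particular does not need any regular variation hypothesis.
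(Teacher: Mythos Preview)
Your proof is correct and takes a genuinely different route from the paper. The paper argues combinatorially: it first restricts to $\sgbsf$, stratifies by the number of distinct $B$-factors $\omega_B(n)$, and proves by induction on $k$ that $N_{\sgbsf}^{(k)}(x) \le C_\e^k x^{\alpha+\e}$; then it invokes the classical bound $\omega(n) = o(\log n)$ to absorb $C_\e^k$, and finally transfers from $\sgbsf$ to $\sgb$ via the identity $N_{\sgb}(x) = \sum_{b \in \sgb} N_{\sgbsf}(x/b^2)$. Your argument is analytic and more direct: the Euler product $Z(s) = \prod_{b \in B}(1-b^{-s})^{-1}$ converges for $s > \alpha$ by partial summation on $N_B$, and the elementary inequality $N_{\sgb}(x) \le Z(s) x^s$ finishes at once. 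Your approach is shorter and avoids both the induction and the detour through $\sgbsf$ and the $\omega$-bound; the paper's approach has the minor virtue of being entirely elementary (no infinite products or Dirichlet series) and of producing the intermediate bound on $N_{\sgbsf}^{(k)}$, but that is not used elsewhere. One small cosmetic point: your inequality $-\log(1-b^{-s}) \ll b^{-s}$ needs the implied constant to depend on $s$ (since $b^{-s}$ is only bounded away from $1$ by $1-2^{-s}$), but this is harmless for the conclusion.
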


\begin{proof}
As $B \subseteq \sgb$, it suffices to prove an upper bound on $N_{\sgb}(x)$. For $n \in \sgb$, let $\omega_B(n):=\#\{b \in B: b \mid n\}$ and define \[N_{\sgbsf}^{(k)}:= \{n \leq x:\, n \in \sgbsf,\, \omega_B(n) \leq k\}.\] We will first show for any $\e > 0$ there is a constant $C_\e$ such that
\begin{equation}\label{eq:N_sgb_k_bound}
N_{\sgbsf}^{(k)}(x) \leq C_\e^k x^{\alpha+\e}.
\end{equation}
For notational reasons let $B^\dagger:= B \cup \{1\}$. Fix $\e > 0$. It is plain there exists a constant $C_\e$ such that
\[
\sum_{b\leq x,\; b\in B^\dagger} \frac{1}{b^{\alpha+\e}} \le C_\e, \quad \text{and} \quad N_{B^\dagger}(x) \leq C_\e x^{\alpha+\e}.
\]
This gives \eqref{eq:N_sgb_k_bound} for $k=1$. But \eqref{eq:N_sgb_k_bound} then follows inductively for all $k$ from the above bounds and
\[
N_{\sgbsf}^{(k)}(x) \le \sum_{b \leq x, \, b \in B^\dagger} N_{\sgbsf}^{(k-1)}\left(\frac{x}{b}\right).
\]
Now note $\omega_B(n) \leq \omega(n) = o(\log x)$ for all $n \leq x$ (see \cite[Theorem 2.10]{MNTI}). Hence for all $x$ there exists $k = o(\log x)$ such that
\[
N_{\sgbsf}(x) = N_{\sgbsf}^{(k)}(x) \leq C_\e^k x^{\alpha+\e} = x^{\alpha+\e + o(1)}.
\]
As $\e$ is arbitrary this implies \[N_{\sgbsf}(x) \ll x^{\alpha+o(1)}.\] 
Using \eqref{eq:N_sgb_to_N_sgbsf} as before this implies $N_{\sgb}(x) \ll x^{\alpha+o(1)}$ as desired.
\end{proof}

It seems likely that the converse to Proposition \ref{prop:index_b_to_sgb} is false, but we do not pursue this here.

\subsection{Variance for $\sgb$ with index $\alpha$}

We now show that the exponent of the variance is determined by the index of $\sgb$.
\begin{lem}\label{lem:estimate c2rho}
Suppose $\vp$ is of bounded variation, supported on a compact subset of $[0,\infty)$. Assume moreover that $\vp$ is non-vanishing on some open interval. If $\sgb$ is of index $\alpha \in (0,1)$, then 
\[
C_2(H;\vp) = H^{\alpha+o(1)}.
\]
\end{lem}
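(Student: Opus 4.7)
The plan is to derive a clean identity exhibiting $C_2(H;\vp)$ as a sum of non-negative contributions indexed by $e \in \sgbsf$, then squeeze it between upper and lower bounds of matching strength. Starting from \eqref{eq:C_2_complete} and using $\Phi_H(-\sigma) = \overline{\Phi_H(\sigma)}$, any common value $\sigma = \ell_1'/d_1 = \ell_2'/d_2$ can be uniquely written $\sigma = t/e$ with $e = \gcd(d_1,d_2)$ and $1 \leq t \leq e-1$, so the inner double sum collapses to $\sum_{t=1}^{e-1}|\Phi_H(t/e)|^2$. Collecting $(d_1,d_2)$ with $\gcd(d_1,d_2) = e$ by writing $d_i = ef_i$ with $f_i \in \sgbsf$, $\gcd(f_i,e)=1$, $\gcd(f_1,f_2)=1$, and using multiplicativity of $\mobb$, the resulting $(f_1,f_2)$-sum factors as an Euler product $\prod_{b \in B,\, b \nmid e}(1-2/b)$. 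This yields the identity
\[
C_2(H;\vp) = \sum_{e \in \sgbsf} \frac{1}{e^2}\bigg(\sum_{t=1}^{e-1} |\Phi_H(t/e)|^2\bigg) \prod_{\ss{b \in B \\ b \nmid e}}\Big(1-\frac{2}{b}\Big),
\]
which is termwise non-negative: the local factor vanishes only at $b=2$, so if $2 \in B$ the sum effectively runs over $e$ with $2 \mid e$, and on such $e$ the infinite product is bounded below by the positive convergent constant $\prod_{b \in B,\, b \neq 2}(1-2/b)$.

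For the upper bound, combining $|\Phi_H| \ll F_H$ (Lemma \ref{lem:bdPhibyF}) with the second moment estimate of Lemma \ref{lem:FHmoments} gives $\sum_{t=1}^{e-1}|\Phi_H(t/e)|^2 \ll e\min(e,H)$, so the total is $\ll \sum_{e \in \sgbsf}\min(e,H)/e$. Splitting at $e = H$ and using Lemma \ref{lem:index_sgb_to_sgbsf} together with a partial summation on the tail $\sum_{e \in \sgbsf,\, e > H} 1/e \ll H^{\alpha-1+o(1)}$ then yields $C_2(H;\vp) \ll H^{\alpha+o(1)}$.

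For the lower bound I would use the finite-Fourier identity $\sum_{t=0}^{e-1}|\Phi_H(t/e)|^2 = e\sum_{m \equiv n \bmod e}\vp(m/H)\vp(n/H)$: once $e$ exceeds (support length)$\,\cdot\,H$, the congruence forces $m = n$ on the support of $\vp(\cdot/H)$, so the right side equals $eH\int \vp^2 + O(e)$ by Riemann-sum approximation (legitimate since $\vp$ has bounded variation). Subtracting $|\Phi_H(0)|^2 = H^2(\int \vp)^2 + O(H)$ and using $(\int\vp)^2 \leq c_\vp \int \vp^2$ from Cauchy--Schwarz gives $\sum_{t=1}^{e-1}|\Phi_H(t/e)|^2 \gg eH$ for every $e \geq CH$ with $C = C(\vp)$ large enough, the hypothesis $\int \vp^2 > 0$ being ensured by $\vp$ non-vanishing on an open interval. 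Restricting the outer $e$-sum accordingly, one is left with $C_2(H;\vp) \gg H\sum_{e \in \sgbsf,\, e \geq CH} 1/e$, and a two-scale comparison of $N_{\sgbsf}$ at the points $H$ and $Y = H^{\gamma}$ with $\gamma > 1$ close to $1$ converts the hypothesis $N_{\sgbsf}(x) = x^{\alpha+o(1)}$ into the tail bound $\sum_{e \in \sgbsf,\, e \geq H} 1/e \gg H^{\alpha-1-\e}$ for any fixed $\e > 0$. I expect the main obstacle to be precisely this last step: extracting polynomial-strength tail control from the weak hypothesis $N_{\sgbsf}(x) = x^{\alpha+o(1)}$, which in principle allows wild oscillation of $N_{\sgbsf}$ within any single dyadic block. (The potential vanishing of the Euler factor at $b = 2$ is a minor side issue, resolved by observing that $\langle B\setminus\{2\}\rangle$ has the same index $\alpha$, so the restriction to $2 \mid e$ still captures the full growth rate.)
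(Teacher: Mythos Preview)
Your proof is correct. The identity you derive is precisely the general-$\vp$ version of what the paper later records (for $\vp = \intervalind$) in Lemma~\ref{lem:continous_variance_sum}, and your derivation of it from \eqref{eq:C_2_complete} by grouping on $e = \gcd(d_1,d_2)$ is clean.

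The paper's argument is organized differently. It starts instead from the expression $C_2(H;\vp) = \sum_{r \in \sgbsf,\, r>1} g(r)^2 \sum_{\sigma \in \mc{R}_B(r)} |\Phi_H(\sigma)|^2$; the upper bound then proceeds exactly as yours does. For the lower bound the paper splits into two cases: when $\int\vp \neq 0$ it retains only the single term $|\Phi_H(1/r)|^2 \gg H^2$ for $r \gg H$, and when $\int\vp = 0$ it applies M\"obius inversion to convert the restricted sum over $\mc{R}_B(r)$ into a signed combination of full-group sums $\sum_{\sigma \in \mc{C}(m)}|\Phi_H(\sigma)|^2$, to which it then applies Plancherel just as you do. Because your identity already carries a full-group inner sum, you avoid both the case split and the M\"obius step, at the price of having to absorb the subtracted term $|\Phi_H(0)|^2$ uniformly in the two cases --- which is exactly what your Cauchy--Schwarz bound $(\int\vp)^2 \le c_\vp \int\vp^2$ accomplishes. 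The two-scale argument you sketch for extracting the tail lower bound $\sum_{e \in \sgbsf,\, e > H} 1/e \gg H^{\alpha-1-\e}$ from the bare index hypothesis is correct and is essentially what the paper invokes in the final displays of each case. Your handling of $2 \in B$ is also right: from $N_{\sgbsf}(x) \le 2 N_{[B\setminus\{2\}]}(x)$ one sees that $[B\setminus\{2\}]$ inherits index $\alpha$.
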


\begin{proof}
We have
\[
C_2(H;\vp) = \sum_{\ss{ r > 1 \\ r \in \sgbsf}} g(r)^2 \sum_{\sg \in \mc{R}_B(r)} |\Phi_H(\sg)|^2.
\]
For an upper bound we apply Lemma \ref{lem:bdPhibyF} and the second part of Lemma \ref{lem:FHmoments} to see that
\begin{equation}\label{eq:complete_sum_F}
\sum_{\sg \in \mc{R}_B(r)} |\Phi_H(\sg)|^2 \ll r\min\{r,H\}.
\end{equation}
As $g(r) \ll 1/r$ we have
\[
C_2(H;\vp) \ll \sum_{\ss{ r > 1 \\ r \in \sgbsf}} \frac{1}{r^2} \cdot r \min\{r,H\} \ll H^{\alpha+o(1)}.
\]
Suppose $\vp\colon \mathbb{R} \to \mathbb{R}$ is supported on an interval $[0,c]$. Before embarking on the proof of the lower bound, we make the following observation. Since $\vp$ is of bounded variation we have
\begin{align}\label{eq:SumtoIntvp}
\left|\Phi_H(0) - H\int_0^{\infty} \vp(t)dt\right| &\leq \sum_{0 \leq m \leq cH} \left|\vp\left(\frac{m}{H}\right) - H\int_{m/H}^{(m+1)/H} \vp(t)dt\right|  \\
&\leq H\int_0^{1/H} \sum_{0 \leq m \leq cH} \left|\vp\left(\frac{m}{H}\right)-\vp\left(\frac{m}{H}+u\right)\right| du \ll 1.
\end{align}
We now split the proof of the lower bound into two cases, depending on whether or not $\int_0^{\infty} \vp(t) dt = 0$. 
\newline
\newline
\noindent \textit{Case 1: $\int_0^{\infty} \vp(t) dt = 0$}. From \eqref{eq:SumtoIntvp} we obtain $\Phi_H(0) = O(1)$. Now, for convenience set $\nu_H(r) = 0$ if $r \notin \sgbsf$, and otherwise put
\[
\nu_H(r) := \sum_{\sg \in \mc{R}_B(r)} |\Phi_H(\sg)|^2 =
\sum_{\ss{a \bmod{r} \\ (a,r) \text{ is $B$-free}}} \left|\Phi_H\left(\frac{a}{r}\right)\right|^2.
\]
Using the convolution formula $\bfree= \mathbf{1} \ast \mobb$, for each $r \in \sgbsf$ we obtain
\[
\nu_H(r) = \sum_{a \bmod{r}} (\mathbf{1} \ast \mobb)((a,r)) \left|\Phi_H\left(\frac{a}{r}\right)\right|^2 = \sum_{\ss{md = r \\ m,d \in \sgbsf}} \mobb(d) \sum_{b \bmod{m}} \left|\Phi_H\left(\frac{b}{m}\right)\right|^2 = \sum_{\ss{md = r \\ m,d \in \sgbsf}} \mobb(d) \sum_{\sg \in \mc{C}(m)} |\Phi_H(\sg)|^2.
\]
Moreover, if $m \geq 10cH$ then by Plancherel's theorem on $\mb{Z}/m\mb{Z}$, we obtain
\begin{equation}\label{eq:L2modm}
\frac{1}{m}\sum_{\sg \in \mc{C}(m)}|\Phi_H(\sg)|^2 = \sum_{\ss{n_1,n_2 \in \mb{Z} \\ m\mid (n_1-n_2)}} \vp\left(\frac{n_1}{H}\right) \vp\left(\frac{n_2}{H}\right) = \sum_{n \in \mb{Z}} \vp\left(\frac{n}{H}\right)^2 \gg H,
\end{equation}
where in the last step we used the fact that $\vp$ is non-vanishing on some open interval. 

Since $g(r)^2 \asymp r^{-2}$ for $r \in \sgbsf$, we find that
\begin{align*}
C_2(H;\vp) &\gg \sum_{\ss{r > 1}} \frac{\nu_H(r)}{r^2} = \sum_{\ss{md > 1 \\ m,d \in \sgbsf, (d,m) = 1}} \frac{\mobb(d)}{d^2} \cdot \frac{1}{m^2} \sum_{\sg \in \mc{C}(m)}|\Phi_H(\sg)|^2 \\
&= \sum_{\ss{m \geq 1 \\ m \in \sgbsf}} \frac{1}{m^2} \sum_{\sg \in \mc{C}(m)} |\Phi_H(\sg)|^2 \sum_{\ss{d \geq 1 \\ (d,m) = 1}} \frac{\mobb(d)}{d^2} - |\Phi_H(0)|^2 \\
&= \sum_{\ss{m \geq 1 \\ m \in \sgbsf}} \frac{1}{m^2} \sum_{\sg \in \mc{C}(m)} |\Phi_H(\sg)|^2 \prod_{\ss{b \in B \\ b \nmid m}}(1-b^{-2}) - O(1).
\end{align*}
Since \[\prod_{\ss{b \in B \\ b \nmid m}} (1-b^{-2}) \geq \prod_{b \in B} (1-b^{-2}) > 0\] uniformly in $m$, we may use positivity to restrict to $m \geq 10 cH$ and apply \eqref{eq:L2modm}, getting
\[
C_2(H;\vp) \gg \sum_{\ss{m \geq 10cH \\ m \in \sgbsf}} \frac{1}{m} \left(\frac{1}{m} \sum_{\sg \in \mc{C}(m)} |\Phi_H(\sg)|^2\right) \gg H\sum_{\ss{m \geq 10cH \\ m \in \sgbsf}} \frac{1}{m} \gg H \cdot H^{\alpha-1+o(1)} = H^{\alpha+o(1)},
\]
where in the second to last step we used the fact that $\sgbsf$ has index $\alpha$ by Lemma \ref{lem:index_sgb_to_sgbsf}. This proves the lower bound in this case. 
\newline
\newline
\noindent \textit{Case 2: $\int_0^{\infty} \vp(t)dt \neq 0$}. In this case, we again use positivity to restrict the sum in $C_2(H;\vp)$ as
\[
C_2(H;\vp) \geq \sum_{\ss{ r > 1 \\ r \in \sgbsf}} g(r)^2 \left|\Phi_H\left(\frac{1}{r}\right)\right|^2.
\]

Let $K \geq 10$ be a large constant. Then for $r \geq KcH$, we have $e(j/r) = 1 + O(1/K)$ uniformly for $1\leq j \leq cH$, so from \eqref{eq:SumtoIntvp} we get
\begin{align*}
\left|\Phi_H\left(\frac{1}{r}\right)\right| &\geq \Big| \sum_{1\leq j \leq cH} \vp\left(\frac{j}{H}\right)\Big| - O\left(\frac{1}{K} \sum_{1 \leq j \leq cH} \left|\vp\left(\frac{j}{H}\right)\right|\right) = |\Phi_H(0)| - O\left(\frac{H}{K}\right) \\
&\geq H\left(\left|\int_0^{\infty} \vp(t) dt\right| - O\left(\frac{1}{K}\right)\right) - O(1) \gg H,
\end{align*}
if $K$ is large enough.

Since $g(r)^2 \gg r^{-2}$ for $r \in \sgbsf$, we have
\[
C_2(H;\vp) \gg H^2 \sum_{\ss{ r > K cH \\ r \in \sgbsf}} \frac{1}{r^2} \gg H^{\alpha+o(1)},
\]
since, again by Lemma \ref{lem:index_sgb_to_sgbsf}, $\sgbsf$ has index $\alpha$. The claim is thus proved in this case as well.
\end{proof}

Obviously this implies Proposition \ref{prop:index_variance} where $\vp = \intervalind$.

\subsection{Variance for regularly varying $\sgb$}

If $\sgb$ is a regularly varying sequence we can say more; in this case we will show the asymptotic formula of Proposition \ref{prop:regularlyvarying_variance}.

We begin with a useful expression for $C_2(H)$. Throughout this subsection we use the notation
\[
V(t) = \frac{\sin \pi t}{\pi t}.
\]
\begin{lem}\label{lem:continous_variance_sum}
We have
\begin{equation}\label{eq:C_2_sinc}
C_2(H) = 2H^2 \sum_{d \in \sgbsf} \frac{1}{d^2} \prod_{\substack{b \nmid d \\ b \in B}} \Big(1 - \frac{2}{b}\Big) \sum_{\lambda\geq 1} \left|V\left(\frac{H\lambda}{d}\right)\right|^2.
\end{equation}
\end{lem}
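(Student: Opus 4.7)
My plan is to start from the closed-form expression \eqref{eq:C_2_complete} in Remark \ref{rem:C_k_alt}, specialize it to $\vp = \intervalind$ (so $\Phi_H = E_H$), and then pass through a sinc-function identity to obtain the stated formula. The first step will be to observe that for $r \in \sgbsf$ every divisor of $r$ is itself in $\sgbsf$; in particular the condition ``$(a,r)$ is $B$-free'' with $r \in \sgbsf$ is equivalent to $(a,r) = 1$. This allows me to reparametrize the diagonal pairs $\ell_1'/d_1 = \ell_2'/d_2$ in \eqref{eq:C_2_complete} via their common reduced fraction $a/r$; writing $d_i = r e_i$ with $e_i \in \sgbsf$, $(e_i,r) = 1$, the sum over $e_1, e_2$ factors into an Euler product equal to $g(r)^2$. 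This collapses the formula to
\[
C_2(H) = \sum_{\ss{r \in \sgbsf \\ r \geq 2}} g(r)^2 \sum_{\ss{1 \leq a < r \\ (a,r) = 1}} |E_H(a/r)|^2.
\]

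Next I would apply the classical partial-fraction identity $\sin^{-2}(\pi x) = \pi^{-2}\sum_{n \in \mb{Z}}(x-n)^{-2}$ to the closed form $|E_H(a/r)|^2 = \sin^2(\pi H a/r)/\sin^2(\pi a/r)$. Since $H$ is a positive integer, $\sin^2(\pi H (a/r - n)) = \sin^2(\pi H a/r)$ for every $n \in \mb{Z}$, so this identity expresses $|E_H(a/r)|^2 = H^2 \sum_{n \in \mb{Z}} |V(H(a/r - n))|^2$. Reindexing by $\lambda = a - nr$ and combining with the outer sum over reduced residues $a$ modulo $r$ makes $\lambda$ run over all nonzero integers with $(\lambda,r) = 1$; the evenness $|V(-x)|^2 = |V(x)|^2$ then folds negative $\lambda$ onto positive ones, giving
\[
C_2(H) = 2H^2 \sum_{r \in \sgbsf} g(r)^2 \sum_{\ss{\lambda \geq 1 \\ (\lambda,r) = 1}} |V(H\lambda/r)|^2.
\]

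To conclude, I would remove the coprimality constraint using the identity $\mathbf{1}_{(\lambda,r) = 1} = \sum_{e \mid (\lambda,r),\, e \in \sgbsf} \mobb(e)$, which is legitimate because $(\lambda, r) \mid r$ lies in $\sgbsf$ and the convolution \eqref{eq:dirichconv} collapses to $\mathbf{1}_{\{1\}}$ on $\sgbsf$. After setting $\lambda = e\mu$ and $d = r/e$ (so $d, e \in \sgbsf$ and $(d,e) = 1$ as $r \in \sgbsf$), this rearranges to
\[
C_2(H) = 2H^2 \sum_{d \in \sgbsf} \Bigl[\sum_{\ss{e \in \sgbsf \\ (e,d) = 1}} \mobb(e)\, g(de)^2 \Bigr] \sum_{\mu \geq 1} |V(H\mu/d)|^2.
\]
The bracketed factor is multiplicative in $e$, and I would evaluate its Euler product using $g(de)^2 = (de)^{-2} \prod_{b \nmid de}(1 - 1/b)^2$; each local factor at $b \nmid d$ reduces to $(1-1/b)^2\bigl(1 - (b-1)^{-2}\bigr) = 1 - 2/b$ after the cancellation $(b-1)^2/b^2 \cdot (b(b-2)/(b-1)^2) = (b-2)/b$, producing exactly $d^{-2}\prod_{b \nmid d,\, b \in B}(1 - 2/b)$. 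The main technical obstacle is this final Euler-product bookkeeping, where care is needed in tracking which factors survive; the reduction to diagonal pairs and the sinc identity itself are standard.
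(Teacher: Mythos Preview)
Your proof is correct and uses the same essential ingredients as the paper: the expression \eqref{eq:C_2_complete} from Remark \ref{rem:C_k_alt}, the partial-fraction identity $\sin^{-2}(\pi x) = \pi^{-2}\sum_{n}(x+n)^{-2}$, and an Euler-product evaluation. The only difference is cosmetic ordering: the paper applies the sinc identity directly to the $(d_1,d_2)$ expression and then parametrizes via $d=(d_1,d_2)$, $d_i=\nu_i d$ to collapse the coprime sum over $(\nu_1,\nu_2)$ into $\prod_{b\nmid d}(1-2/b)$, whereas you first reduce to the single-variable $g(r)^2$ form (effectively re-deriving the $k=2$ case of Proposition \ref{prop:general_limiting_moments}) and then remove the coprimality $(\lambda,r)=1$ by M\"obius inversion before the Euler product.
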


\begin{proof}
We begin with the expression in Remark \ref{rem:C_k_alt} for $C_2(H;\vp)$. If we specialize to the case $\vp = \intervalind$, then $\Phi_H(t) = E_H(t)$. If we use the identity \[E_H(t) = e\left(\frac{H+1}{2} t\right) \frac{\sin( \pi H t)}{\sin (\pi t)},\] we see that \eqref{eq:C_k_alt} gives
\[
C_2(H) = \sum_{d_1, d_2 \in \sgbsf} \frac{\mobb(d_1)\mobb(d_2)}{d_1 d_2} \sum_{\ss{0 < \ell_1' < d_1 \\ 0 < \ell_2' < d_2 \\ \ell_1'/d_1 = \ell_2'/d_2 }} \Big(\frac{\sin(\pi H \ell_1'/d_1)}{\sin(\pi \ell_1'/d_1)}\Big) \Big(\frac{\sin(\pi H \ell_2'/d_2)}{\sin(\pi \ell_2'/d_2)}\Big).
\]
But in this sum $\ell_1'/d_1 = \ell_2'/d_2$ and using
\[
\frac{1}{(\sin \pi x)^2} = \sum_{k \in \Z} \frac{1}{(\pi(x+k))^2},
\]
we obtain
\begin{multline*}
C_2(H) = H^2 \sum_{d_1, d_2 \in \sgbsf} \frac{\mobb(d_1)\mobb(d_2)}{d_1 d_2} \sum_{\ss{\lambda_1, \lambda_2 \neq 0 \\ \lambda_1/d_1 = \lambda_2/d_2 }} V\left(\frac{H \lambda_1}{d_1}\right) V\left(\frac{H \lambda_2}{d_2}\right) \\
= 2H^2 \sum_{d_1, d_2 \in \sgbsf} \frac{\mobb(d_1)\mobb(d_2)}{d_1 d_2} \sum_{\ss{\lambda_1, \lambda_2 \ge 1 \\ \lambda_1/d_1 = \lambda_2/d_2 }} V\left(\frac{H \lambda_1}{d_1}\right) V\left(\frac{H \lambda_2}{d_2}\right).
\end{multline*}
Write $d = (d_1,d_2)$ and $d_1 = \nu_1 d$, $d_2 = \nu_2 d$ and parameterize solutions to $\lambda_1/d_1 = \lambda_2/d_2$ by $\lambda_1 = \lambda \nu_1$, $\lambda_2 = \lambda \nu_2$. Then the above expression for $C_2(H)$ simplifies to
\[
C_2(H) = 2 H^2 \sum_{d \in \sgbsf} \sum_{\ss{\nu_1, \nu_2 \in \sgbsf \\ (\nu_1, \nu_2) = 1}} \frac{\mobb(d \nu_1) \mobb(d \nu_2)}{d^2 \nu_1 \nu_2} \sum_{\lambda > 0} V\left( \frac{H\lambda}{d}\right)^2,
\]
which in turn simplifies to \eqref{eq:C_2_sinc}.
\end{proof}

We will use the following result of P\'{o}lya to estimate the above sum.

\begin{prop}[P\'{o}lya]\label{prop:regular_variation_asymp}
If $N_R(x)$ is the counting function of a sequence $R$ which regularly varies with index $\alpha$, and if $f\colon [0,\infty) \rightarrow \R$ is a function that is Riemann integrable over every finite interval $[a,b] \subset [0,\infty)$ which satisfies
\[
|f(x)| \ll \begin{cases} x^{- \alpha + \varepsilon} & \textrm{as}\; x \rightarrow 0, \\ x^{-\alpha - \varepsilon} & \textrm{as}\; x \rightarrow \infty,\end{cases}
\]
for some $\varepsilon > 0$, then 
\[
\lim_{X\rightarrow\infty} \frac{1}{N_R(X)} \sum_{r \in R} f\left(\frac{r}{X}\right) = \int_0^\infty f(t) d(t^\alpha).
\]
\end{prop}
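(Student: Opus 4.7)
My plan is to prove Proposition~\ref{prop:regular_variation_asymp} by the standard three-step strategy: (i) establish the limit for indicators of intervals bounded away from $0$ and $\infty$, (ii) extend to step functions and then to continuous functions on compact subintervals of $(0,\infty)$ by linearity and uniform approximation, (iii) control the tails at $0$ and $\infty$ using the growth hypotheses on $f$.

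For step (i), fix $0<a<b<\infty$ and take $f=\mathbf{1}_{(a,b]}$. Then
\[
\frac{1}{N_R(X)}\sum_{r\in R} \mathbf{1}_{(a,b]}(r/X) = \frac{N_R(bX)-N_R(aX)}{N_R(X)}.
\]
Writing $N_R(x)\sim x^{\alpha}L(x)$ with $L$ slowly varying and using $L(bX)/L(X)\to 1$, $L(aX)/L(X)\to 1$, the right-hand side tends to $b^{\alpha}-a^{\alpha}=\int_0^\infty \mathbf{1}_{(a,b]}(t)\,d(t^{\alpha})$. By linearity this yields the claim for any step function supported in $[\epsilon,M]\subset(0,\infty)$. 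For a general $f$ that is Riemann integrable on $[\epsilon,M]$, sandwich $f$ between step functions that are within $\delta$ in $L^\infty$ on $[\epsilon,M]$; since Riemann integrability on $[\epsilon,M]$ exactly ensures this approximation can be made with error $\int_\epsilon^M \delta\,d(t^\alpha)\ll \delta M^\alpha$, one obtains the limit for $f\cdot\mathbf{1}_{[\epsilon,M]}$.

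Step (iii) is where the growth hypotheses enter. For the tail near $\infty$: by the hypothesis $|f(x)|\ll x^{-\alpha-\varepsilon}$, one has
\[
\Big|\int_M^\infty f(t)\,d(t^\alpha)\Big|\ll \int_M^\infty t^{-\alpha-\varepsilon}\,d(t^\alpha)\ll M^{-\varepsilon}.
\]
For the discrete tail, partial summation together with $N_R(y)\ll y^{\alpha+o(1)}$ gives
\[
\sum_{r\in R,\ r>MX}|f(r/X)|\ll \sum_{r\in R,\ r>MX}(r/X)^{-\alpha-\varepsilon}\ll X^{\alpha}L(X)\cdot M^{-\varepsilon+o(1)},
\]
so dividing by $N_R(X)\sim X^\alpha L(X)$ leaves $\ll M^{-\varepsilon+o(1)}$. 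The tail near $0$ is handled symmetrically: the integrability of $t^{-\alpha+\varepsilon}$ against $d(t^\alpha)=\alpha t^{\alpha-1}dt$ near zero is clear, and the discrete sum $\sum_{r\in R,\ r\le \epsilon X}(r/X)^{-\alpha+\varepsilon}$ is controlled by partial summation using $N_R(y)\ll y^{\alpha+o(1)}$, giving a contribution $\ll \epsilon^{\varepsilon+o(1)}$ after dividing by $N_R(X)$. Letting first $X\to\infty$ with $\epsilon,M$ fixed and then $\epsilon\to 0$, $M\to\infty$ concludes the proof.

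The only delicate point is the uniformity of the convergence $L(cX)/L(X)\to 1$ in the tail estimates. This is furnished by the Uniform Convergence Theorem for slowly varying functions (see \cite[Chapter~4.1]{PolyaSzego1}), which guarantees that the convergence is uniform for $c$ in compact subsets of $(0,\infty)$ and that $L(y)=y^{o(1)}$ holds uniformly in the relevant sense. With this tool available the whole argument is routine; without uniformity, one would need to keep track of the implicit $o(1)$ terms by hand when integrating/summing over the variable range of $c=r/X$, but in either case the tails are dominated by convergent geometric-type series in $\varepsilon$ so the estimates close.
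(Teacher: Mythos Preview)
The paper does not actually prove this proposition: it simply cites P\'{o}lya and Szeg\H{o}'s problem book and P\'{o}lya's 1923 paper. Your sketch is the standard argument and is correct in outline, so in that sense you are supplying more than the paper does.

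Two small imprecisions are worth flagging. In step (ii), Riemann integrability on $[\epsilon,M]$ does \emph{not} guarantee sandwiching by step functions close to $f$ in $L^{\infty}$; it guarantees step functions $g_{-}\le f\le g_{+}$ with $\int_{\epsilon}^{M}(g_{+}-g_{-})\,dt<\delta$. The argument still goes through, because on $[\epsilon,M]$ the density $\alpha t^{\alpha-1}$ is bounded and hence $\int_{\epsilon}^{M}(g_{+}-g_{-})\,d(t^{\alpha})\ll_{\epsilon,M}\delta$; then apply the step-function case to $g_{\pm}$ and squeeze. In step (iii), the estimates written with $o(1)$ exponents are not quite self-contained, since one sends $X\to\infty$ before $M\to\infty$ and $\epsilon\to 0$. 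The clean way to make this uniform is via Potter's bounds for regularly varying functions (for any $\delta>0$ one has $N_R(y)/N_R(X)\ll (y/X)^{\alpha+\delta}$ for $y\ge X$ and $\ll (y/X)^{\alpha-\delta}$ for $y\le X$, for $X$ large), after which a dyadic decomposition over $r\in(2^{j}MX,2^{j+1}MX]$ and $r\in(2^{-j-1}\epsilon X,2^{-j}\epsilon X]$ gives tail contributions $\ll M^{\delta-\varepsilon}$ and $\ll \epsilon^{\varepsilon-\delta}$ respectively. This is presumably what you intend by invoking the Uniform Convergence Theorem, though note that Potter's bounds and the UCT are standard facts from the theory of regular variation (e.g.\ Bingham--Goldie--Teugels) rather than from \cite{PolyaSzego1}.
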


\begin{proof}
This can be found in P\'{o}lya and Szeg\H{o}'s book \cite[Problem No. 159 in Part II, Chapter 4 of Volume I]{PolyaSzego1}; see also P\'{o}lya's paper \cite{Polya1923}.
\end{proof}

\begin{proof}[Proof of Proposition \ref{prop:regularlyvarying_variance}]
We define
\[
\F(s) = \sum_{d \in \sgbsf} \frac{1}{d^s} \prod_{\substack{b \nmid d \\  b \in B}} \Big(1 - \frac{2}{b}\Big) = \prod_{b \in B} \Big(1 - \frac{2}{b} + \frac{1}{b^s}\Big),
\]
and
\[
\G(s) = \sum_{r \in \sgb} \frac{1}{r^s} = \prod_{b \in B} \Big(1 - \frac{1}{b^s}\Big)^{-1}.
\]
For both $\F(s)$ and $\G(s)$ the above sum and Euler product converge absolutely for $\Re\, s > \alpha$. Note that for $\Re\, s > \alpha$ we have
\begin{equation}\label{eq:euler_factorization}
\F(s) = \G(s)\U(s),
\end{equation}
where
\[
\U(s) = \prod_{b\in B} \Big(1 - \frac{2}{b} + \frac{2}{b^{1+s}}-\frac{1}{b^{2s}} \Big) = \sum_{c \geq 1} \frac{u_c}{c^s},
\]
where the coefficients $u_c$ are defined by this relation. (The coefficients $u_c$ will be supported on $\sgb$ but this fact will not be important.) 

Note the Euler product defining $\U(s)$ converges absolutely for $\Re \, s > \alpha/2$, and therefore the Dirichlet series also converges absolutely in this region. Hence it follows that for any $\varepsilon > 0$,
\[
\sum_{t < c \leq 2t} |u_c| \ll t^{\frac{\alpha}{2} + \varepsilon},
\]
with the implications
\begin{equation}\label{eq:u_c_bounds}
\sum_{c \leq t}  |u_c| \ll t^{\frac{\alpha}{2}+\varepsilon}, \hspace{20pt} \sum_{c\geq t} \frac{|u_c|}{c} \ll t^{\frac{\alpha}{2} - 1 + \varepsilon},
\end{equation}
which will be important later.

By using the Dirichlet convolution implicit in \eqref{eq:euler_factorization}, we can write
\[
C_2(H) = 2 \sum_{r \in \sgb} \sum_{c} \sum_{\lambda \geq 1} u_c \cdot \Big(\frac{H}{rc}\Big)^2 \Big| V\Big(\frac{H\lambda}{rc}\Big)\Big|^2 = 2 \sum_{r \in \sgb} f\left(\frac{r}{H}\right),
\]
for
\[
f(x) = \sum_{c}\sum_{\lambda \geq 1} \frac{u_c}{(xc)^2} \Big|V\Big(\frac{\lambda}{xc}\Big)\Big|^2.
\]
But from the bound $V(\xi)^2 \ll \min(1, 1/\xi^2)$, we have
\begin{multline*}
f(x) \ll \frac{1}{x^2} \sum_{c} \frac{|u_c|}{c^2} \Big( \sum_{\lambda \leq cx}1 + \sum_{\lambda > cx} \frac{x^2 c^2}{\lambda^2}\Big) 
\ll \sum_{c < 1/x} |u_c| + \frac{1}{x}\sum_{c \geq 1/x} \frac{|u_c|}{c} \ll \begin{cases} \frac{1}{x} & \textrm{if}\; x > 1, \\ \left(\frac{1}{x}\right)^{\frac{\alpha}{2} + \varepsilon} & \textrm{if}\; x \leq 1. \end{cases}
\end{multline*}
The last estimate follows because if $x > 1$ the first term vanishes while the second is bounded trivially, while if $x \leq 1$ both terms satisfy the claimed estimate by \eqref{eq:u_c_bounds}. 

Therefore one may check that P\'{o}lya's proposition may be applied and 
\begin{equation}\label{eq:c2exp}
\begin{split}
C_2(H) &\sim 2 N_{\sgb}(H) \int_0^\infty f(t) \alpha t^{\alpha-1}\, dt\\
&= 2 N_{\sgb}(H) \sum_c \sum_{\lambda \geq 1} \frac{u_c}{c^2} \int_0^\infty \frac{1}{t^2} V\Big(\frac{\lambda}{tc}\Big)^2 \alpha t^{\alpha-1}\, dt,
\end{split}
\end{equation}
where rearrangement of sums and integrals is justified in the second line of \eqref{eq:c2exp} by absolute convergence. By a change of variables $\tau = \lambda/tc$, the last line can be simplified to
\[
2 N_{\sgb}(H)  \zeta(2-\alpha) \prod_{b \in B} \Big(1 - \frac{2}{b} + \frac{2}{b^{1+\alpha}} - \frac{1}{b^{2\alpha}}\Big) \alpha \int_0^\infty \tau^{1-\alpha} V(\tau)^2\, d\tau.
\]
since the sums over $\lambda$ and $c$ can then be simplified as $\zeta(2-\alpha)$ and $\U(\alpha)$ respectively. But (see \cite[formula 3.823]{gradshteyn2014table})
\[
\int_0^\infty \tau^{1-\alpha} V(\tau)^2\, d\tau = - 2^{\alpha-1} \pi^{\alpha-2} \cos\left(\frac{\pi \alpha}{2}\right) \Gamma(-\alpha),
\]
which recovers the constant \eqref{eq:eulerprod_var} claimed in the proposition.
\end{proof}

\begin{rem}
We will not require it, but with a bit more work one can show that if $\sgb$ is regularly varying with index $\alpha \in (0,1)$, and $\vp$ is of bounded variation with compact support,
\[
C_2(H;\vp) \sim A_{\vp,\alpha} N_{\sgb}(H),
\]
where
\[
A_{\vp,\alpha} = 2\alpha \zeta(2-\alpha) \prod_{b \in B} \Big(1 - \frac{2}{b} + \frac{2}{b^{1+\alpha}} - \frac{1}{b^{2\alpha}}\Big) \int_0^\infty \tau^{1-\alpha} |\hat{\vp}(\tau)|^2\, d\tau.
\]
\end{rem}

\section{Diagonal terms}

In this section we show how the approximation of $C_k(H;\vp)$ by Gaussian moments arises from terms in which $r_1,\ldots,r_k$ are \emph{paired} and \emph{non-repeated} in the sum defining this quantity. In the next section we will show that the remaining terms are negligible.

We say that the tuple $r_1,\ldots,r_k$ is \emph{paired} if $k$ is even and we may partition $\{1,2,\ldots,k\}$ into $k/2$ disjoint pairs $\{(a_i, b_i)\}_{i=1}^{k/2}$ with $r_{a_i} = r_{b_i}$ and $a_i \neq b_i$. We say that $r_1,\ldots,r_k$ is \emph{repeated} if $r_{i_1}=r_{i_2} = r_{i_3}$ for some $i_1<i_2<i_3$. For $k=2$ all the terms in $C_2(H;\vp)$ must be paired.

We adopt the abbreviations $r = [r_1,\ldots,r_k]$ for the $\lcm$ and $\mathbf{r} = (r_1,\ldots,r_k)$ for the vector of the $r_i$. Given integers $r_1, r_2, \ldots, r_k>1$ belonging to $\sgbsf$, let
\[ S_H(\mathbf{r}) := \sum_{\substack{\rho_i \in \mc{R}_B(r_i) \, (1 \le i \le k)\\\sum \rho_i \equiv 0 \bmod 1}} \prod_{i=1}^{k} F_H(\rho_i). \]
Our approach in this section largely follows Montgomery and Soundararajan's proof of \cite[Theorem 1]{MS}. We will use the following variant of the Fundamental Lemma; the result is a generalization of \cite[Lemma 2]{MS}, which corresponds to $B$ being the set of primes. The original proof\footnote{In \cite[p.~597]{MS}, the second occurrence of $r_i$ in the first equation should not be there.} works as is.
\begin{lem}[Montgomery and Soundararajan]\label{lem:msgen}
Let $q_1,\ldots,q_k$ be integers with $q_i>1$ and $q_i \in \sgbsf$. Let $G$ be a complex-valued function defined on $(0,1)$, and suppose that $G_0$ is a non-decreasing function on $\sgbsf$ such that
\begin{equation}\label{eq:mshypothesis}
\sum_{a=1}^{q-1} \left|G\left( \frac{a}{q} \right) \right|^2 \le q G_0(q)
\end{equation}
for all $1 \neq q \in \sgbsf$. Then 
\begin{equation}\label{eq:msgen} 
\bigg| \sum_{\substack{a_1,\ldots, a_k\\ 0<a_i<q_i \\ \sum a_i/q_i \equiv 0 \bmod 1}} \prod_{i=1}^{k} G\left( \frac{a_i}{q_i}\right)\bigg| \le \frac{1}{[q_1,\ldots,q_k]} \prod_{i=1}^{k}(q_i G_0(q_i)^{\frac{1}{2}}).
\end{equation}
\end{lem}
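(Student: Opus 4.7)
The plan is to reduce to Lemma \ref{lem:flgen} by peeling off the prime factors of each $q_i$ that are not shared with any other $q_j$, via the Chinese Remainder Theorem. Specifically, I would write $q_i = q_i' q_i''$, where $q_i''$ is the product of those $b \in B$ that divide $q_i$ but no other $q_j$, and $q_i'$ collects the remaining $b \in B$ dividing $q_i$. Since $q_i \in \sgbsf$ and $B$ is pairwise coprime, both $q_i'$ and $q_i''$ lie in $\sgbsf$. By construction the $q_1'', \ldots, q_k''$ are pairwise coprime, and each $q_i''$ is also coprime to every $q_j$ for $j \neq i$. Setting $Q' := [q_1', \ldots, q_k']$ and $Q'' := q_1'' \cdots q_k''$, I obtain $\gcd(Q', Q'') = 1$, $[q_1,\ldots,q_k] = Q' Q''$, and $\prod_i q_i = Q'' \prod_i q_i'$.

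Using CRT, I would parameterize $a_i/q_i \equiv b_i/q_i' + c_i/q_i'' \pmod 1$ with $0 \le b_i < q_i'$ and $0 \le c_i < q_i''$. Because $\gcd(Q', Q'') = 1$, the constraint $\sum_i a_i/q_i \in \Z$ decouples into $\sum_i b_i/q_i' \in \Z$ together with $\sum_i c_i/q_i'' \in \Z$. The latter equation, combined with the mutual coprimality of the $q_i''$, forces $c_i \equiv 0 \pmod{q_i''}$, hence $c_i = 0$ for every $i$. The condition $a_i > 0$ then collapses to $b_i > 0$, and the left-hand side of \eqref{eq:msgen} simplifies to
\[ \sum_{\ss{0 < b_i < q_i' \\ \sum_i b_i/q_i' \in \Z}} \prod_{i=1}^k G(b_i/q_i'). \]

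By construction, every $b \in B$ dividing $Q'$ divides at least two of the $q_i'$, so Lemma \ref{lem:flgen} applies (extending $G$ by zero at $0 \in \mc{C}(q_i')$ if needed) and bounds the displayed sum by $(Q')^{-1} \prod_i (q_i' \sum_{b=1}^{q_i'-1} |G(b/q_i')|^2)^{1/2}$. Invoking \eqref{eq:mshypothesis} this is at most $(Q')^{-1} \prod_i q_i' G_0(q_i')^{1/2}$. Combining with the identity $\prod_i q_i' / Q' = \prod_i q_i / [q_1,\ldots,q_k]$ and the monotonicity $G_0(q_i') \le G_0(q_i)$ (since $q_i' \mid q_i$) produces exactly \eqref{eq:msgen}. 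The edge case of some $q_i' = 1$ (i.e.\ $q_i$ coprime to all other $q_j$) makes the reduced sum empty, and the bound holds trivially. The only genuine obstacle is verifying the CRT decoupling, and in particular seeing that $\gcd(Q', Q'') = 1$ together with pairwise coprimality of the $q_i''$ forces $c_i = 0$ for all $i$; after that, everything is bookkeeping matching the hypotheses of Lemma \ref{lem:flgen}.
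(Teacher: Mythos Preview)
Your proof is correct and is essentially the argument the paper invokes: the paper does not write out a proof but cites \cite[Lemma~2]{MS}, noting that the original argument works verbatim for general $B$. That argument is precisely yours---peel off the $b\in B$ dividing a unique $q_i$ via the CRT decomposition $q_i=q_i'q_i''$, observe the congruence forces the $q_i''$-parts to vanish, apply the Fundamental Lemma (Lemma~\ref{lem:flgen}) to the $q_i'$, and finish with the monotonicity of $G_0$.
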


 The next lemma separates repeated or non-paired $r$ from what we will show is the main contribution to $C_k(H;\vp)$.
 
\begin{lem}\label{lem:diag}
Let $k \ge 3$, and suppose $\vp$ is of bounded variation and supported in a compact subset of $[0,\infty)$. If $k$ is odd we have
\begin{equation}
 C_k(H;\vp) \ll \sum_{\substack{\mathbf{r} \text{ is repeated}\\\text{or non-paired}\\r_1,\ldots, r_k>1}}
 \prod_{i=1}^{k} \frac{\mobb^2(r_i)}{r_i}S_H(\mathbf{r}).
 \end{equation}
If $k$ is even we have
\begin{equation}
 C_k(H;\vp) = \mu_k \sum_{r_1,\ldots,r_{k/2}>1} \prod_{i=1}^{k/2} g^2(r_i) \sum_{\substack{b_1,\ldots,b_{k/2}\\1 \le b_i \le r_i\\ \sum_{i=1}^{k/2} b_i/r_i \equiv 0 \bmod 1}} \prod_{i=1}^{k} J_H(b_i,r_i) + O \left( \sum_{\substack{\mathbf{r} \text{ is repeated}\\\text{or non-paired}\\r_1,\ldots, r_k>1}}
 \prod_{i=1}^{k} \frac{\mobb^2(r_i)}{r_i}S_H(\mathbf{r}) \right)
 \end{equation}
 where 
\[ J_H(b,n):= \sum_{\substack{a=1\\ (a,n) \, \text{ is $B$-free}\\ (b-a,n) \text{ is $B$-free}}}^{n} \Phi_H\left( \frac{a}{n}\right) \Phi_H\left( \frac{b-a}{n}\right).\]
\end{lem}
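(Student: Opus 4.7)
The plan is to split $C_k(H;\vp)$ according to the multiset structure of $\mathbf{r}=(r_1,\ldots,r_k)$: write $C_k(H;\vp)=C_k^{\mathrm{pnr}}(H;\vp)+C_k^{\mathrm{rest}}(H;\vp)$, where the first part restricts to \emph{paired non-repeated} tuples (each value appears exactly twice, which forces $k$ even) and the second part collects all other tuples (i.e.\ those that are repeated or non-paired). The estimate for $C_k^{\mathrm{rest}}$ is immediate: from the definition of $g$ we have $|g(r)|\le \mobb^2(r)/r$, while Lemma~\ref{lem:bdPhibyF} gives $|\Phi_H(\sigma)|\le F_H(\sigma)$, so the inner sum is bounded by $S_H(\mathbf{r})$, yielding
\[|C_k^{\mathrm{rest}}(H;\vp)|\le \sum_{\mathbf{r}\text{ repeated or non-paired}}\prod_{i=1}^{k}\frac{\mobb^2(r_i)}{r_i}S_H(\mathbf{r}).\]
If $k$ is odd, no perfect matching of $\{1,\ldots,k\}$ exists, so $C_k^{\mathrm{pnr}}=0$ and the first claim follows.

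\textbf{Parametrization for even $k$.} For even $k$, every paired non-repeated tuple $\mathbf{r}$ induces a unique perfect matching $P$ of $\{1,\ldots,k\}$ by pairing indices with equal $r$-values; conversely, any matching $P$ together with an assignment of $k/2$ \emph{distinct} values $s_1,\ldots,s_{k/2}\in\sgbsf$ with each $s_i>1$ reconstructs $\mathbf{r}$. The summand defining $C_k^{\mathrm{pnr}}$ is invariant under simultaneous permutation of the tuples $(r_i)$ and $(\sigma_i)$, so every matching contributes equally; since the number of perfect matchings of $\{1,\ldots,k\}$ is $\mu_k=(k-1)!!$, I fix the canonical matching $\{(1,2),(3,4),\ldots,(k-1,k)\}$ and multiply by $\mu_k$.

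\textbf{Reduction to $J_H$.} For this canonical matching, set $r_{2i-1}=r_{2i}=s_i$ and write $\sigma_{2i-1}=a_i/s_i$, $\sigma_{2i}=(b_i-a_i)/s_i$ with $1\le b_i\le s_i$ indexing the sum $\sigma_{2i-1}+\sigma_{2i}\pmod{1}$. The $B$-freeness conditions on $(a_i,s_i)$ and $(b_i-a_i,s_i)$ are exactly those appearing in the definition of $J_H(b_i,s_i)$, and the degenerate values $a_i\equiv 0$ or $a_i\equiv b_i\pmod{s_i}$ are automatically excluded since $s_i>1$ lies in $\sgbsf$ and hence is not $B$-free. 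The inner sum over the $a_i$ therefore becomes $\prod_i J_H(b_i,s_i)$, while the global congruence $\sum_i\sigma_i\equiv 0\pmod 1$ reduces to $\sum_i b_i/s_i\equiv 0\pmod{1}$. This yields the main term claimed in the lemma, except restricted to tuples $(s_1,\ldots,s_{k/2})$ with distinct entries.

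\textbf{Removing the distinctness restriction.} Finally, I drop the distinctness condition by adding back contributions from tuples with $s_i=s_j$ for some $i\neq j$. Under the change of variables above, such tuples correspond in the $\mathbf{r}$-coordinates to tuples in which some value appears at least four times, hence are repeated in the sense of the lemma. Using $|J_H(b,n)|\le \sum_a F_H(a/n)F_H((b-a)/n)$ and running the change of variables in reverse, the extra contribution is bounded by $\sum_{\mathbf{r}\text{ repeated}}\prod_i\mobb^2(r_i)/r_i\cdot S_H(\mathbf{r})$, which is absorbed into the stated error term. The main bookkeeping obstacle is verifying the permutation symmetry in the second step precisely and tracking the $B$-freeness constraints through the change of variables in the third step; once these are pinned down, the rest of the argument is routine.
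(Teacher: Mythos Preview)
Your proof is correct and follows essentially the same approach as the paper's: split off the paired non-repeated tuples, bound the rest via $|g(r)|\le \mobb^2(r)/r$ and $\Phi_H\ll F_H$, count perfect matchings to extract the factor $\mu_k$, perform the change of variables $\sigma_{2i-1}+\sigma_{2i}\equiv b_i/s_i$ to obtain $J_H$, and finally remove the distinctness condition at the cost of a repeated-tuple error. The only cosmetic differences are your choice of canonical matching $\{(1,2),\ldots,(k-1,k)\}$ versus the paper's $r_i=r_{k/2+i}$, and your slightly more explicit verification that the endpoints $a_i\equiv 0,b_i$ are excluded by the $B$-freeness condition.
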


\begin{proof}
We first consider odd $k$. There are no vectors $(r_1,\ldots,r_k)$ that are both paired and non-repeated. The proof of this case is concluded by recalling that $g(n)$ is dominated by $\mobb^2(n)/n$ and $\Phi_H$ by $F_H$.

We now consider even $k$. By the triangle inequality,
\begin{equation}\label{eq:ck triangle}
 C_k(H;\vp) = \sum_{\substack{\mathbf{r} \text{ is paired}\\ \text{and non-repeated}\\r_1,\ldots, r_k>1}}
 \prod_{i=1}^{k} g(r_i) \sum_{\substack{\rho_i \in \mc{R}_{B}(r_i) \, (1 \le i \le k)\\ \rho_1 + \cdots + \rho_k \equiv 0 \bmod 1}} \prod_{i=1}^{k} \Phi_H(\rho_i) + O\left(  \sum_{\substack{\mathbf{r} \text{ is repeated}\\\text{or non-paired}\\r_1,\ldots, r_k>1}}
 \prod_{i=1}^{k} \frac{\mobb^2(r_i)}{r_i}S_H(\mathbf{r}) \right).
 \end{equation}
There are $(k-1)(k-3)\cdots 1 =\mu_k$ ways in which the pairing in the first sum in \eqref{eq:ck triangle} can occur. We take the pairing to be $r_i  =r_{k/2 + i}$ without loss of generality. We further write $\rho_i  = a_{i}/r_i$ and set $b_i$ to be the unique integer in $[1,r_i]$ congruent to $a_i  + a_{k/2+i}$ modulo $r_i$. Hence
\begin{align}\label{eq:mt1} 
\sum_{\substack{\mathbf{r} \text{ is paired}\\ \text{and non-repeated}\\r_1,\ldots, r_k>1}}
 \prod_{i=1}^{k} g(r_i) \sum_{\substack{\rho_i \in \mc{R}_{B}(r_i) \, (1 \le i \le k)\\ \rho_1 + \cdots + \rho_k \equiv 0 \bmod 1}} \prod_{i=1}^{k} \Phi_H(\rho_i) &= \mu_k \sum_{\substack{r_1,\ldots,r_{k/2}>1\\\text{distinct}}} \prod_{i=1}^{k/2} g^2(r_i) \sum_{\substack{b_1,\ldots,b_{k/2}\\1 \le b_i \le r_i\\ \sum_{i=1}^{k/2} b_i/r_i \equiv 0 \bmod 1}} \prod_{i=1}^{k} J_H(b_i,r_i)\\
 &= \mu_k \sum_{r_1,\ldots,r_{k/2}>1} \prod_{i=1}^{k/2} g^2(r_i) \sum_{\substack{b_1,\ldots,b_{k/2}\\ 1 \le b_i \le r_i\\ \sum_{i=1}^{k/2} b_i/r_i \equiv 0 \bmod 1}} \prod_{i=1}^{k} J_H(b_i,r_i) \\
 & \qquad + O\left( 
\sum_{\substack{\mathbf{r} \text{ is repeated}\\r_1,\ldots, r_k>1}}
 \prod_{i=1}^{k} \frac{\mobb^2(r_i)}{r_i} S_H(\mathbf{r})\right).
\end{align}
This finishes the proof.
\end{proof}

We now show that paired and non-repeated terms above can be reduced to powers of the variance.

\begin{prop}\label{prop:diag}
Let $k \ge 4$ be even. Suppose $\vp$ satisfies the assumptions of Lemma
\ref{lem:estimate c2rho}.
If the sequence $\sgb$ has index $\alpha$ then
\[ \sum_{r_1,\ldots,r_{k/2}>1} \prod_{i=1}^{k/2} g^2(r_i) \sum_{\substack{b_1,\ldots,b_{k/2}\\1 \le b_i \le r_i\\ \sum_{i=1}^{k/2} b_i/r_i \equiv 0 \bmod 1}} \prod_{i=1}^{k} J_H(b_i,r_i) = C_2(H;\vp)^{\frac{k}{2}} \left( 1 + O\left(H^{-\alpha+o(1)}\right)\right).\]
\end{prop}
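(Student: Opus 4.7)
The plan is to decompose the sum over $(b_1, \ldots, b_{k/2})$ according to the set $S \subseteq \{1, \ldots, k/2\}$ of indices where $b_i = r_i$ (equivalently, $b_i/r_i \equiv 0 \bmod 1$); write $m = k/2 - |S|$ for the number of \emph{off-diagonal} indices. When $m = 0$ every $b_i = r_i$, the constraint $\sum_i b_i/r_i \equiv 0 \bmod 1$ is automatic, and using that $J_H(r, r) = \sum_{1 \le a \le r-1,\, (a, r) \text{ is $B$-free}} |\Phi_H(a/r)|^2$ (since $\Phi_H(-a/r) = \overline{\Phi_H(a/r)}$), the sum factorises into $\bigl(\sum_{r > 1,\, r \in \sgbsf} g^2(r) J_H(r, r)\bigr)^{k/2} = C_2(H;\vp)^{k/2}$, as recognised from the $k=2$ case of Lemma~\ref{lem:diag}. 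When $m = 1$, the single remaining constraint $b_i/r_i \equiv 0 \bmod 1$ has no solution in $1 \leq b_i < r_i$, so this case vanishes.

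For $m \geq 2$, I would apply Lemma~\ref{lem:msgen} with $G(b/r) := J_H(b, r)$. Writing $f(a) := \mathbf{1}_{(a, r) \text{ is $B$-free}} \Phi_H(a/r)$, the function $J_H(\cdot, r)$ is the convolution $f * f$ on $\Z/r\Z$; Young's inequality gives $\|f * f\|_2 \leq \|f\|_1 \|f\|_2$, and combining with Lemmas~\ref{lem:bdPhibyF} and~\ref{lem:FHmoments} yields $\sum_{b=1}^{r-1} |J_H(b, r)|^2 \ll r^3 \min(r, H) (\log H)^2$. Taking $G_0(r) = r^2 \min(r, H) (\log H)^2$, Lemma~\ref{lem:msgen} produces
\[
\Bigl|\sum_{\substack{1 \le b_i < r_i \\ \sum_i b_i/r_i \equiv 0 \bmod 1}} \prod_{i=1}^m J_H(b_i, r_i)\Bigr| \ll \frac{(\log H)^m}{[r_1, \ldots, r_m]} \prod_{i=1}^m r_i^2 \min(r_i, H)^{1/2},
\]
after which, using $g^2(r_i) \ll r_i^{-2}$, the off-diagonal contribution at level $m$ is bounded by $(\log H)^m \sum_{r_1, \ldots, r_m \in \sgbsf,\, r_i > 1} \prod_i \min(r_i, H)^{1/2} / [r_1, \ldots, r_m]$.

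To estimate this weighted LCM sum, I plan to parametrise by $L = [r_1, \ldots, r_m]$, use Lemma~\ref{lem:tau_lem_bound} to count tuples with a given LCM, and split at $L = H$. The target bound is $H^{(m-1)\alpha + o(1)}$ for each $m \geq 2$; combined with $C_2(H;\vp)^{k/2 - m} \asymp H^{(k/2 - m)\alpha + o(1)}$ coming from the diagonal indices, this gives a contribution of size $C_2(H;\vp)^{k/2} H^{-\alpha + o(1)}$, uniformly in $m$, which is the desired relative error. The main obstacle is the LCM sum itself: the crude bound $\prod_i \min(r_i, H)^{1/2} \leq \min(L, H)^{m/2}$ suffices for $m = 2$ or for $\alpha \geq 1/2$, but for $\alpha < 1/2$ and $m \geq 3$ one must analyse the divisor structure prime-by-prime, or split depending on whether the individual $r_i$ are comparable in size to $L$, to extract the needed cancellation in the inner sum $\sum_{(r_i) : [r_1,\ldots,r_m]=L} \prod_i \sqrt{r_i}$.
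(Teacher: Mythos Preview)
Your decomposition by the number $m$ of indices with $b_i \neq r_i$ is exactly the paper's (with $m = j$ there), and the treatment of $m = 0, 1$ is correct. For $m \geq 2$ your Young-inequality bound $\sum_{b=1}^{r-1} |J_H(b,r)|^2 \ll r^3 \min(r,H)(\log H)^2$ is a valid (and slightly sharper) variant of the paper's $\ll r^3 H$, but the route you sketch after applying Lemma~\ref{lem:msgen} does not close.

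The gap is this: after summing your Fundamental-Lemma output against $g^2(r_i) \asymp r_i^{-2}$ you face
\[
(\log H)^m \sum_{\substack{r_1,\ldots,r_m \in \sgbsf \\ r_i > 1}} \frac{\prod_{i} \min(r_i,H)^{1/2}}{[r_1,\ldots,r_m]},
\]
and already the diagonal slice $r_1 = \cdots = r_m = r$ contributes $\sum_{r \in \sgbsf} \min(r,H)^{m/2}/r \gg H^{m/2+\alpha-1+o(1)}$. There is no cancellation to extract here --- all terms are positive --- so no refinement of the type you describe (prime-by-prime analysis of the divisor structure, or splitting by the size of individual $r_i$ relative to $L$) can bring this sum below $H^{m/2+\alpha-1}$. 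For $\alpha < 1/2$ and $m \geq 3$ this exceeds the target $H^{(m-1)\alpha}$, and the argument fails.

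The paper sidesteps this by combining two bounds rather than relying on Lemma~\ref{lem:msgen} alone. For $[r_1,\ldots,r_m] > Z$ it applies Lemma~\ref{lem:msgen} with the cruder $G_0(n) = CHn^2$, obtaining $\ll H^{m/2} Z^{\alpha-1+o(1)}$. For $[r_1,\ldots,r_m] \leq Z$ it abandons the Fundamental Lemma entirely, drops the congruence constraint $\sum b_i/r_i \in \Z$, and uses the elementary $L^1$ bound
\[
\sum_{b=1}^{n-1} |J_H(b,n)| \ll n^2 (\log n)^2,
\]
which after multiplying by $g^2(r_i)$ and summing gives a contribution $\ll Z^{\alpha+o(1)}$. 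Balancing at $Z = H^{m/2}$ yields $W_m(H) \ll H^{m\alpha/2+o(1)} \leq H^{(m-1)\alpha + o(1)}$ for every $m \geq 2$ and every $\alpha \in (0,1)$. This is the same device --- switching to an $L^1$ estimate when the lcm is small --- that drives Section~\ref{sec:Nunes_bound}; it is the missing ingredient in your plan.
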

\begin{proof}
Let $j$ be the number of values of $i$ for which $b_i \neq r_i$, so that $b_i = r_i$ for the remaining $k/2-j$ values of $i$. Since there are $\binom{k/2}{j}$ ways of choosing the $j$ indices, we see that the left-hand side is 
\begin{equation}
\sum_{j=0}^{k/2} \binom{\frac{k}{2}}{j} C_2(H;\vp)^{\frac{k}{2}-j} W_j(H)
\end{equation}
where $W_0 \equiv 1$ and 
\[ W_j(H) = \sum_{r_1,\ldots,r_j>1} \prod_{i=1}^{j} g^2(r_i) \sum_{\substack{b_1,\ldots,b_j\\ 0<b_i<r_i\\ \sum b_i/r_i \equiv 0 \bmod 1}}\prod_{i=1}^{j} J_H(b_i,r_i).\]
Recall $C_2(H;\vp)=H^{\alpha+o(1)}$ by Lemma~\ref{lem:estimate c2rho}. The term $j=0$ will contribute the main term, so it remains to bound the other terms. It is also clear that $j=1$ contributes $0$ as $b_1/r_1 \in (0,1)$.  
To finish the proof it suffices to show that $W_j(H) = O(H^{\alpha(j-1)+o(1)})$ for $j \ge 2$.

Montgomery and Soundararajan \cite[equation~(34)]{MS} showed that \begin{equation}\label{eq:J bound}
\sum_{b=1}^{n-1} |J_H(b,n)|^2 \ll n^{3}H
\end{equation}
for $B$ being the set of primes and $\vp = \intervalind$. Their argument works as is for general $B$ and general $\vp$. Indeed their proof proceeds by dropping the conditions $(a,n)=1$ and $(b-a,n)=1$ from the sum defining $J$, applying the estimate $\Phi_H \ll F_H$ (in the general case this is Lemma \ref{lem:bdPhibyF}), and straightforwardly estimating the sum of positive terms that result, so their bound applies to our sum as well.

Due to \eqref{eq:J bound} we may apply Lemma~\ref{lem:msgen} with $G_0(n)=CHn^{2}$ to find that
\[ \sum_{\substack{b_1,\ldots,b_j\\ 0< b_i<r_i \\ \sum b_i/r_i \in \Z}} \prod_{i=1}^{j} J_H(b_i,r_i) \ll \frac{1}{[r_1,\ldots,r_j]} \prod_{i=1}^{j} (r_i^{2}H^{\frac{1}{2}}).\]
Additionally, as $|\Phi_H(x)| \ll F_H(x) \le 1/\|x\|$, we have the simple bound
\[ \sum_{b=1}^{n-1}|J_H(b,n)| \ll n^{2} \sum_{a=1}^{n-1} \frac{1}{\min\{a,n-a\} } \sum_{\ss{b=1 \\ b \neq a}}^{n-1}\frac{1}{\min\{|b-a|,n-|b-a|\}} \ll n^{2} (\log n)^2\]
and hence, for any value of $Z \ge 1$,
\begin{align} 
W_j(H) &\ll H^{\frac{j}{2}} \sum_{\substack{r_1,\ldots, r_j\in \sgbsf\\ [r_1,\ldots,r_j]>Z}} \frac{1}{[r_1,\ldots,r_j]} +  \sum_{\substack{r_1,\ldots, r_j \in \sgbsf\\ [r_1,\ldots,r_j]\le Z}} \prod_{i=1}^{j} (\log r_i)^2 \\
& \le H^{\frac{j}{2}} \sum_{r>Z} \frac{\tau_{B,\lcm,j}(r)}{r} + (\log Z)^{2j} \sum_{r \le Z} \tau_{B,\lcm,j}(r)\\
& \le H^{\frac{j}{2}} Z^{\alpha-1+o(1)} + Z^{\alpha+o(1)}.
\end{align}
We take $Z=H^{j/2}$ to find that the left-hand side is \[\ll H^{\frac{j\alpha}{2} + o(1)} \ll H^{\alpha(j-1)+o(1)}\]
as $j \ge 2$.
\end{proof}

Taken together, Lemma \ref{lem:diag} and Proposition \ref{prop:diag} show that paired and non-repeated terms in $C_k(H;\vp)$ are enough to recover the claimed main term.

\section{Off-diagonal terms}

In this section, we show that the repeated or non-paired terms contribute negligibly to $C_k(H;\vp)$. Our main tool will be a refinement of the Fundamental Lemma, due also to Montgomery and Vaughan \cite[Lemma 8]{MV}. However, we will also crucially use ideas of Nunes to bound the range of $\bf{r}$ that we need to consider. We also will critically make use of the P\'{o}lya--Vinogradov inequality to bound a certain term that appears in the refined Fundamental Lemma; this is a new ingredient of our proof and some argument of this sort seems to be essential when the index $\alpha$ is less than or equal to $1/2$ (see Remark \ref{rem:alphaHalf} for a relevant discussion).

\subsection{Preliminary estimates: Nunes's reduction in the range of \bf{r}}\label{sec:Nunes_bound}

Following Nunes \cite{Nunes1}, in this subsection we will show that in the sum defining $C_k(H;\vp)$, those $\bf{r}$ for which $r$ is large yet each $r_i$ is relatively small make a negligible contribution. We first make a few simple observations.

The following bound follows directly from the Fundamental Lemma. (Very similar estimates have been used in \cite[p.~317, equation~(9)]{MV}, \cite[equation~(33)]{Hall2}, and \cite[Lemma~2.3]{Nunes1}.)
\begin{lem}[Montgomery and Vaughan]\label{lem:fl bound}
Given integers $r_1, r_2, \ldots, r_k>1$ belonging to $\sgbsf$ we have
\begin{equation}\label{eq:hall}
S_H(\mathbf{r}) \ll r_1 r_2 \cdots r_k r^{-1} \prod_{i=1}^{k} \min\{r_i,H \}^{\frac{1}{2}} \le H^{\frac{k}{2}} r_1 r_2 \cdots r_k r^{-1} .
\end{equation}
\end{lem}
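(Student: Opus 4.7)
The plan is to reduce $S_H(\mathbf{r})$ to a situation where Lemma \ref{lem:flgen} applies and then substitute the bound from the second part of Lemma \ref{lem:FHmoments}. The only subtle point is that the Fundamental Lemma requires the hypothesis that each $b \in B$ dividing $r = [r_1,\ldots,r_k]$ divides at least two of the $r_i$, which is not automatic from the assumption $r_i \in \sgbsf$. However, this hypothesis is in fact forced by the coprimality/$B$-freeness condition in the definition of $\mc{R}_B(r_i)$, so that $S_H(\mathbf{r})$ vanishes otherwise.

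First I would check this vanishing. Suppose for contradiction that some $b \in B$ divides exactly one $r_j$. Since $r_j \in \sgbsf$, $b$ appears to exactly the first power in $r_j$, and since $b$ divides no other $r_i$, we have $b \nmid r/r_j$ while $b \mid r/r_i$ for every $i \neq j$. Writing $\rho_i = a_i/r_i$ with $(a_i, r_i)$ being $B$-free, the condition $\sum \rho_i \equiv 0 \pmod 1$ becomes $\sum_i (r/r_i) a_i \equiv 0 \pmod r$; reducing modulo $b$ this collapses to $(r/r_j) a_j \equiv 0 \pmod b$, forcing $b \mid a_j$. But then $b \mid (a_j, r_j)$, contradicting the $B$-freeness of $(a_j, r_j)$. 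Hence the summation set for $S_H(\mathbf{r})$ is empty and $S_H(\mathbf{r}) = 0$, making the inequality trivial in this case.

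In the remaining case, each $b \in B$ dividing $r$ divides at least two of the $r_i$, so Lemma \ref{lem:flgen} applies. I would take $G_i(\rho_i) = F_H(\rho_i) \cdot \mathbf{1}_{\rho_i \in \mc{R}_B(r_i)}$, which extends $F_H \big|_{\mc{R}_B(r_i)}$ by zero to all of $\mc{C}(r_i)$. Then
\[
\sum_{\rho_i \in \mc{C}(r_i)} |G_i(\rho_i)|^2 \le \sum_{\ell=1}^{r_i - 1} F_H\!\left(\frac{\ell}{r_i}\right)^{\!2} \ll r_i \min\{r_i, H\},
\]
by the second part of Lemma \ref{lem:FHmoments}, where I use that $\rho_i = 0$ does not lie in $\mc{R}_B(r_i)$ (since $r_i > 1$ is divisible by some $b \in B$, so $(r_i, r_i) = r_i$ is not $B$-free). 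Plugging these bounds into Lemma \ref{lem:flgen} yields
\[
S_H(\mathbf{r}) \le \frac{1}{r} \prod_{i=1}^{k} \bigl(r_i \cdot r_i \min\{r_i, H\}\bigr)^{1/2} = \frac{r_1 \cdots r_k}{r} \prod_{i=1}^{k} \min\{r_i, H\}^{1/2},
\]
which is the claimed estimate. The crude second inequality then follows by bounding each $\min\{r_i, H\}^{1/2}$ by $H^{1/2}$.

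The only mildly delicate step is the vanishing argument in the first paragraph; everything else is a direct invocation of results already established in the excerpt. No genuine obstacle is anticipated.
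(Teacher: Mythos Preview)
Your proof is correct and follows essentially the same approach as the paper, which simply states that the bound ``follows directly from the Fundamental Lemma'' together with the second part of Lemma~\ref{lem:FHmoments}. You have made explicit the vanishing step (when some $b\in B$ divides only one $r_j$) that the paper leaves implicit; one minor cosmetic point is that the final displayed inequality should read $\ll$ rather than $\le$, since the $L^2$ bound from Lemma~\ref{lem:FHmoments} carries an implied constant.
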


The following elementary estimate was proved by Nunes \cite[Lemma~2.3]{Nunes1} in the special case $B=\{p^m : p \text{ prime}\}$ ($m \ge 2$).
\begin{lem}[Nunes]\label{lem:nunes bound}
Given $r_1,\ldots, r_k >1$ belonging to $\sgbsf$ we have
\begin{equation}\label{eq:nunes} S_H(\mathbf{r})  \ll \prod_{i=1}^{k} r_i \log r_i. 
\end{equation}
\end{lem}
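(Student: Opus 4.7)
The plan is very short: I would exploit the non-negativity of $F_H$ to decouple the $k$ summations entirely, reducing to a one-variable estimate already established.

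First I would observe that for $r_i > 1$ with $r_i \in \sgbsf$, each $\rho_i \in \mc{R}_B(r_i)$ has a unique representative $a_i/r_i$ with $1 \le a_i \le r_i - 1$. Indeed, the only other candidate would be $a_i = r_i$, but this would force $(r_i,r_i) = r_i$ to be $B$-free; since $r_i \in \sgbsf$ with $r_i > 1$ is a (non-empty) product of elements of $B$, it is divisible by some $b \in B$, so this is impossible. Thus the sum defining $S_H(\mathbf{r})$ runs over a subset of the $k$-tuples $(a_1,\ldots,a_k)$ with $1 \le a_i \le r_i - 1$, constrained additionally by the $B$-freeness of each $(a_i,r_i)$ and by $\sum_i a_i/r_i \equiv 0 \bmod 1$.

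Since $F_H \ge 0$, I would simply drop both of these additional constraints and factor the resulting sum as
\[
S_H(\mathbf{r}) \;\le\; \prod_{i=1}^{k} \sum_{a_i=1}^{r_i-1} F_H\!\left(\frac{a_i}{r_i}\right).
\]
Applying the first bound of Lemma~\ref{lem:FHmoments} to each factor (in the weaker form $\ll r_i \log r_i$) yields the claim.

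There is essentially no obstacle here: this is the naive bound obtained by ignoring the diophantine linking constraint $\sum \rho_i \equiv 0 \bmod 1$, which is legal because every summand is non-negative. The point of including the lemma is that this estimate is complementary to the Fundamental Lemma bound in Lemma~\ref{lem:fl bound}: the latter is sharper when $r = [r_1,\ldots,r_k]$ is much larger than $\prod r_i^{1/2}$ (as happens when the $r_i$ are nearly coprime), whereas the factor-by-factor bound above is sharper when the $r_i$ share substantial common factors so that $r$ is small. Both will be needed in the subsequent range-dissection arguments for off-diagonal terms.
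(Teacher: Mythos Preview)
Your argument is correct and matches the paper's approach exactly: drop the congruence constraint $\sum \rho_i \equiv 0 \bmod 1$ (legitimate by non-negativity of $F_H$), factor the sum, and apply the first part of Lemma~\ref{lem:FHmoments} to each factor. Your added verification that $\mc{R}_B(r_i) \subset \{a_i/r_i : 1 \le a_i \le r_i-1\}$ for $r_i > 1$ in $\sgbsf$ makes explicit a point the paper leaves implicit.
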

\begin{proof}
Ignore the restriction $\sum \rho_i \equiv 0 \bmod 1$ and apply the first part of Lemma~\ref{lem:FHmoments}.
\end{proof}
The Fundamental Lemma only sees the $L^2$-norm of $F_H(i/r)$. Lemma~\ref{lem:nunes bound} is superior to Lemma~\ref{lem:fl bound} in certain ranges, as it makes use of the much smaller $L^1$-norm.
As Nunes does, we may combine \eqref{eq:hall} and \eqref{eq:nunes}, obtaining the bound
\begin{equation}\label{eq:combined}
S_H(\mathbf{r}) \ll \left(\prod_{i=1}^{k} r_i
\right) \min \{ H^{\frac{k}{2}}r^{-1}, \prod_{i=1}^{k} \log r_i\} \le \prod_{i=1}^{k} r_i  \cdot \min \{ H^{\frac{k}{2}}r^{-1}, (\log r)^k\}.
\end{equation}
We now use Nunes's bound \eqref{eq:nunes} to deal with $\mathbf{r}$ with small $\lcm$. They turn out to contribute negligibly, a fact that is not detected directly by the Fundamental Lemma bound \eqref{eq:hall}.
\begin{lem}\label{lem:hn1}
Let $H,M>1$. Suppose that $\sgb$ has index $\alpha \in (0,1)$. We have \[\sum_{\substack{r_1,\ldots, r_k>1\\ r \le M}}
\prod_{i=1}^{k} \frac{\mobb^2(r_i)}{r_i}S_H(\mathbf{r}) \ll M^{\alpha+o(1)}.\]
\end{lem}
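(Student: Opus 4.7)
The plan is to apply Nunes's bound \eqref{eq:nunes} directly, exploiting the fact that the factors of $r_i$ in the denominator exactly cancel against those produced by the $L^1$-type estimate $S_H(\mathbf{r}) \ll \prod_i r_i \log r_i$. This should leave only a harmless $\log$ factor together with a counting problem over lattice tuples.

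First, I would use $S_H(\mathbf{r}) \ll \prod_i r_i \log r_i$ from Lemma \ref{lem:nunes bound} to obtain the bound
\[
\sum_{\substack{r_1,\ldots,r_k>1\\ r\le M}} \prod_{i=1}^k \frac{\mobb^2(r_i)}{r_i} S_H(\mathbf{r}) \ll \sum_{\substack{r_1,\ldots,r_k>1\\ r_i \in \sgbsf\\ r\le M}} \prod_{i=1}^k \log r_i.
\]
Since each $r_i \le r \le M$, we have $\prod_i \log r_i \ll (\log M)^k$, which for fixed $k$ is $M^{o(1)}$.

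Next, I would regroup the remaining sum according to the value of the least common multiple $r=[r_1,\ldots,r_k]$. Because $\sgbsf$ is closed under $\lcm$, the value $r$ lies in $\sgbsf$, so the number of admissible $k$-tuples with a given $r$ is exactly $\tau_{B,\lcm,k}(r)$. Lemma \ref{lem:tau_lem_bound} gives $\tau_{B,\lcm,k}(r) \ll r^{o(1)}$, hence
\[
\sum_{\substack{r_1,\ldots,r_k>1\\ r_i \in \sgbsf\\ r\le M}} 1 = \sum_{\substack{r \in \sgbsf\\ r\le M}} \tau_{B,\lcm,k}(r) \ll M^{o(1)} N_{\sgbsf}(M).
\]
Finally, by Lemma \ref{lem:index_sgb_to_sgbsf} the sequence $\sgbsf$ has the same index $\alpha$ as $\sgb$, so $N_{\sgbsf}(M) \ll M^{\alpha+o(1)}$, and combining these estimates yields the desired bound $M^{\alpha+o(1)}$. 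No serious obstacle is anticipated; the only point that requires care is tracking that every factor $r_i$ appearing from Nunes's $L^1$-bound is neutralized by the reciprocal $1/r_i$ in the coefficient, which is precisely why the $L^1$-bound (rather than the $L^2$-bound from the Fundamental Lemma) is the right tool here.
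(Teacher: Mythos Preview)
Your proposal is correct and follows essentially the same approach as the paper: apply Nunes's $L^1$-bound \eqref{eq:nunes} to cancel the $1/r_i$ factors, bound $\prod_i \log r_i \le (\log M)^k$, regroup by the common value $r=[r_1,\ldots,r_k]$, and then use $\tau_{B,\lcm,k}(r) \ll r^{o(1)}$ together with the index-$\alpha$ count. The only cosmetic difference is that you explicitly invoke Lemma~\ref{lem:index_sgb_to_sgbsf} to pass from $\sgb$ to $\sgbsf$, whereas the paper leaves this implicit.
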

\begin{proof}
Appealing to \eqref{eq:nunes}, the contribution of $\mathbf{r}$ with $r \le M$ is at most
\[\ll \sum_{\substack{r_1,\ldots, r_k>1\\ r \le M }} \prod_{i=1}^{k} \frac{\mobb^2(r_i)}{ r_i} \prod_{i=1}^{k} r_i \log r_i \le \sum_{\substack{r_1,\ldots, r_k>1\\ r_i \in \sgbsf,\,r \le M }} (\log r)^k  \ll (\log M)^k \sum_{r \le M} \tau_{B,\lcm,k}(r).\]
The claimed bound then follows from Lemma \ref{lem:tau_lem_bound} since $\sgb$ has index $\alpha$.
\end{proof}

We now use the Fundamental Lemma to show that among those $\mathbf{r}$ with $r>M$, the contribution of $\mathbf{r}$ with $r_i\leq N$ is small. Here $M$ and $N$ are parameters to be chosen later.
\begin{lem}\label{lem:hn2}
Let $1<N<H$ and $M>1$. If $\sgb$ has index $\alpha \in (0,1)$ we have
\[\sum_{\substack{r_1,\ldots, r_k>1\\ r_i \le N \text{ for some }i\\r> M}}
 \prod_{i=1}^{k} \frac{\mobb^2(r_i)}{ r_i} S_H(\mathbf{r}) \ll H^{\frac{k-1}{2}}N^{\frac{1}{2}} M^{\alpha-1+o(1)}.\]
\end{lem}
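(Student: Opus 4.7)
The plan is to exploit symmetry, apply the Fundamental Lemma bound of Lemma~\ref{lem:fl bound} in a way that extracts a factor of $N^{1/2}$ from the small index $r_i \le N$, and then use the index hypothesis to estimate the remaining sum over $r = [r_1,\ldots,r_k]>M$.

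First, since both $S_H(\mathbf{r})$ and $\prod_{i=1}^{k} \mobb^2(r_i)/r_i$ are symmetric in the $r_i$, the sum in question is bounded by $k$ times the analogous sum taken only over tuples with the distinguished coordinate $r_1 \leq N$. So the task reduces to bounding
\[\Sigma:=\sum_{\substack{r_1,\ldots,r_k \in \sgbsf \\ r_1 \le N,\, r>M}} \prod_{i=1}^{k}\frac{1}{r_i}\, S_H(\mathbf{r}).\]

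Next, apply Lemma~\ref{lem:fl bound} in the sharper form
\[S_H(\mathbf{r}) \ll \frac{r_1 \cdots r_k}{r} \prod_{i=1}^{k} \min\{r_i,H\}^{1/2}.\]
Since $r_1 \le N < H$, we have $\min\{r_1,H\}^{1/2} \le N^{1/2}$, while for $i \ge 2$ we simply use $\min\{r_i,H\}^{1/2} \le H^{1/2}$. Dividing by $r_1 r_2 \cdots r_k$, this yields
\[\frac{S_H(\mathbf{r})}{r_1 \cdots r_k} \ll \frac{N^{1/2}\, H^{(k-1)/2}}{r}.\]

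Finally, inserting this into $\Sigma$ and forgetting the constraint $r_1 \le N$ (which only shrinks the sum), we obtain
\[\Sigma \ll N^{1/2}\, H^{(k-1)/2} \sum_{\substack{r \in \sgb \\ r > M}} \frac{\tau_{B,\lcm,k}(r)}{r}.\]
By Lemma~\ref{lem:tau_lem_bound} we have $\tau_{B,\lcm,k}(r) \ll r^{o(1)}$, and since $\sgb$ has index $\alpha \in (0,1)$, i.e.\ $N_{\sgb}(x) = x^{\alpha+o(1)}$, partial summation gives
\[\sum_{\substack{r \in \sgb \\ r > M}} \frac{1}{r^{1-o(1)}} \ll M^{\alpha - 1 + o(1)}.\]
Combining the bounds gives $\Sigma \ll H^{(k-1)/2} N^{1/2} M^{\alpha-1+o(1)}$, which is the stated estimate. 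There is no genuine obstacle here; the only subtlety worth flagging is to use the full form of \eqref{eq:hall} rather than the crude bound $S_H(\mathbf{r}) \ll H^{k/2} r_1 \cdots r_k/r$, since it is precisely the factor $\min\{r_1,H\}^{1/2} \le N^{1/2}$ that produces the savings $N^{1/2}$ over $H^{1/2}$ needed in the final bound.
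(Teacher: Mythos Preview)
Your proof is correct and follows essentially the same approach as the paper: apply the sharper form of \eqref{eq:hall} from Lemma~\ref{lem:fl bound} to extract $\min\{r_i,H\}^{1/2}\le N^{1/2}$ from the small coordinate, then bound the remaining sum over $r>M$ via $\tau_{B,\lcm,k}$ and the index hypothesis. The only cosmetic difference is that the paper applies the bound directly without first symmetrizing to $r_1\le N$, but this is immaterial.
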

\begin{proof}
If $r_i \le N$ for some $i$, and all $r_j>1$ are in $\sgbsf$ then \eqref{eq:hall} implies that 
\begin{equation}\label{eq:savings}
S_H(\mathbf{r}) \ll H^{\frac{k-1}{2}} r_1 r_2 \cdots r_k r^{-1} N^{\frac{1}{2}},
\end{equation}
so that our sum is at most
\begin{equation}
\sum_{\substack{r_1,\ldots, r_k>1\\ r_i \le N \text{ for some }i\\r>M}} \prod_{i=1}^{k}\frac{\mobb^2(r_i)}{ r_i}   S_H(\mathbf{r}) \le
H^{\frac{k-1}{2}} N^{\frac{1}{2}} \sum_{\substack{r_1,\ldots, r_k>1\\ r_i \le N \text{ for some }i\\r> M}} \frac{\prod_{i=1}^{k}\mobb^2(r_i)}{r}  \le H^{\frac{k-1}{2}}N^{\frac{1}{2}} \sum_{r >M} \frac{\tau_{B,\lcm,k}(r)}{r}.
\end{equation}
The claimed bound again follows from Lemma \ref{lem:tau_lem_bound}, as $\sgb$ has index $\alpha$.
\end{proof}

\subsection{Using the Fundamental Lemma}\label{sec:fl_bound}

To estimate off-diagonal contributions we use the following variant of the Fundamental Lemma. It generalizes \cite[Lemma 7]{MV} of Montgomery and Vaughan, which corresponds to $B$ being the set of primes and $T = H^{1/9}$. The proof follows that of \cite{MV} essentially without change and so we do not include it here.

\begin{lem}[Montgomery and Vaughan]\label{lem:lem7gen}
Let $H>1$ and $1 \le T \le H^{1/9}$. Let $k \ge 3$, and let $r_1$, $r_2$, \ldots, $r_k$ be  integers with $r_i>1$ and $r_i \in \sgbsf$. Further let $r=[r_1,\ldots,r_k]$ and $d=(r_1,r_2)$, and write $d=st$ where $s \mid r_3  r_4 \cdots r_k$ and $(t,r_3 r_4 \cdots r_k)=1$. 
Then
\[ S_H(\mathbf{r}) \ll r_1 r_2 \cdots r_k r^{-1}  H^{\frac{k}{2}} (T_1 + T_2 + T_3 + T_4)\]
where 
\begin{align}
T_1 &= \frac{\log H}{T^{\frac{1}{2}}},\\
T_2&=\begin{cases} d^{-\frac{1}{4}} & \text{if }r_i > H T^{-1} \text{ for all }i,\\ 0 &\text{otherwise,}\end{cases},\\
T_3 &= \begin{cases} s^{-\frac{1}{2}} & \text{if }r_i>H T^{-1}\text{ for all }i\text{ and }r_1=r_2,\\ 0 & \text{ otherwise,}\end{cases},\\
T_4&= \left( \frac{1}{r_1 r_2 s H^2}\sum_{\tau \in \mc{R}_B(t)} F_H\left(  \frac{\|r^{\prime}_1 s \tau \|}{r^{\prime}_1 s} \right)^2 F_H\left(  \frac{\|r^{\prime}_2 s \tau \|}{r^{\prime}_2 s} \right)^2\right)^{\frac{1}{2}} 
\end{align}
if $H T^{-1} \le r_1,r_2 \le H^{2}$, $t>d^{1/2}$ and $d \le (H T)^{1/2}$; otherwise $T_4=0$.
\end{lem}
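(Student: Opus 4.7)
I would follow the strategy of Montgomery and Vaughan for their original \cite[Lemma 7]{MV}, which treats $B=\{p:p\text{ prime}\}$ and $T=H^{1/9}$. That argument uses $B$ only through two structural features: distinct elements of $B$ are pairwise coprime, so the Fourier characters $e(a/r)$ on $\Z/r\Z$ factor via the Chinese remainder theorem for $r\in\sgbsf$; and $\sgbsf$ is closed under $\gcd$ and $\lcm$. Both are in force here, so no substantially new ideas should be needed.

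My first step would be to split $F_H(\rho)=F_H^{\flat}(\rho)+F_H^{\sharp}(\rho)$, where $F_H^{\sharp}$ is supported on $\rho$ with $F_H(\rho)>H/T$ (equivalently $\|\rho\|<T/H$) and $F_H^{\flat}$ on the complement, and to expand $\prod_{i=1}^{k}F_H(\rho_i)$ into the corresponding $2^{k}$ pieces indexed by subsets $S\subseteq\{1,\ldots,k\}$ of coordinates on which the $\sharp$-part is taken. For $S=\emptyset$ the pointwise bound $F_H^{\flat}\le H/T$ combined with the $L^{1}$ estimate of Lemma~\ref{lem:FHmoments} will give $\sum_{\rho}F_H^{\flat}(\rho)^{2}\ll rH\log H/T$, and plugging this into Lemma~\ref{lem:flgen} saves a factor $T^{-1/2}\log H$ over the direct application of the Fundamental Lemma. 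This produces $T_1$.

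For $S\neq\emptyset$ each $\rho_i$ in the $\sharp$-support takes the form $\ell_i/r_i$ with $|\ell_i|<Tr_i/H$, which forces $\ell_i=0$ unless $r_i>H/T$; hence it suffices to treat the case $r_i>H/T$ for all $i$. In this regime I would isolate the pair $(\rho_1,\rho_2)$ and use the factorization $d=st$, with $s\mid r_3\cdots r_k$ and $(t,r_3\cdots r_k)=1$, to decompose the constraint $\sum\rho_i\equiv 0\pmod{1}$ via the CRT into a component modulo $s$ (tied to $\rho_3,\ldots,\rho_k$ and summable against them via Lemma~\ref{lem:flgen}) and a component modulo $t$ (decoupled from the other coordinates because $(t,r_3\cdots r_k)=1$). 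Combining this decomposition with Cauchy--Schwarz and the $L^{2}$ moment from Lemma~\ref{lem:FHmoments} should yield $T_2$; the symmetric case $r_1=r_2$ allows a sharper average over residues mod $s$ giving $T_3$; and the residual sum over $\tau\in\mc{R}_B(t)$ is exactly $T_4$.

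The main obstacle, in my view, is not any single estimate but the bookkeeping of the factor $r_1\cdots r_k/r$ through the successive splittings --- it is this factor that encodes the overlap among the $r_i$ under $\gcd$ and $\lcm$ and is the source of the sharpness of the Fundamental Lemma --- together with verifying that the side conditions attached to each $T_j$ (for instance $HT^{-1}\le r_1,r_2\le H^{2}$, $t>d^{1/2}$, $d\le(HT)^{1/2}$ for $T_4$) are exactly what one recovers from the case analysis above. Because the arithmetic steps invoke only the two structural properties of $\sgbsf$ noted at the outset, I expect the MV argument to transport essentially verbatim, which is why the authors elect not to reproduce it.
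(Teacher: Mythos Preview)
Your proposal is correct and mirrors the paper's approach exactly: the authors state that the proof follows \cite[Lemma~7]{MV} essentially without change and omit it, and you have correctly identified both the $F_H^{\flat}/F_H^{\sharp}$ splitting that drives the case analysis and the two structural features of $\sgbsf$ (pairwise coprimality within $B$, closure under $\gcd$ and $\lcm$) that make the transport go through verbatim.
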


We use this to produce a first bound on repeated or non-paired $\mathbf{r}$ outside of the range treated by Nunes's bound.

\begin{prop}\label{prop:e2 e3}
Let $k \ge 3$. Let $N$ be in the range $H \ge N \ge H^{8/9}$ and $M>1$. Suppose that $\sgb$ has index $\alpha  \in (0,1)$. We have
\begin{equation}
    \sum_{\substack{\mathbf{r} \text{ is repeated}\\\text{or non-paired}\\r_1,\ldots, r_k>N\\ r>M}}
 \prod_{i=1}^{k} \frac{\mobb^2(r_i)}{r_i} S_H(\mathbf{r}) \ll H^{\frac{k-1}{2}+o(1)} \left(  N^{\frac{1}{2}} + H^{\frac{1}{2}}  N^{-\frac{1}{4(k-1)}} + \mc{E}^{\frac{1}{2}}H^{-\frac{1}{2}} \right) M^{\alpha-1+o(1)} 
\end{equation}
where
\[ \mc{E} =   \sum_{\substack{N^{1/(k-1)} \le st \le H/\sqrt{N}\\  (s,t)=1,\,s,t \in \sgbsf,\,t>s}} s^{-\alpha-3}t^{-\alpha-2} \sum_{\tau \in \mc{R}_{B}(t)}\bigg( \sum_{\substack{a: \, (a,t)=1\\  N<ast \le H^2 \\ a \in \sgbsf}} a^{-\alpha-1} F_H\left(  \frac{\|a s \tau \|}{a s} \right)^2 \bigg)^2.\]
\end{prop}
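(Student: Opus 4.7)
The whole argument will hinge on Lemma~\ref{lem:lem7gen} applied with $T = H/N$. Since $H^{8/9} \le N \le H$ we automatically have $1 \le T \le H^{1/9}$, and moreover the cut-off ``$r_i > HT^{-1}$ for all $i$'' appearing in the definitions of $T_2$ and $T_3$ is exactly our standing hypothesis $r_i > N$, so all four terms are potentially in play. Using $\mobb^2(r_i) = 1$ for $r_i \in \sgbsf$, Lemma~\ref{lem:lem7gen} yields
\[
\prod_{i=1}^{k}\frac{\mobb^2(r_i)}{r_i}\,S_H(\mathbf{r}) \ll \frac{H^{k/2}}{r}\bigl(T_1 + T_2 + T_3 + T_4\bigr),
\]
and our job is to sum the right-hand side over the relevant tuples, using the standard divisor estimate $\sum_{r > M}\tau_{B,\lcm,k}(r)/r \ll M^{\alpha-1+o(1)}$ that follows from Lemma~\ref{lem:tau_lem_bound} and the index hypothesis on $\sgb$.

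The $T_1$ contribution is immediate: $T_1 = \log H / T^{1/2}$ is uniform in $\mathbf{r}$, so after summing we obtain $H^{k/2+o(1)} T^{-1/2} M^{\alpha-1+o(1)} = H^{(k-1)/2+o(1)} N^{1/2} M^{\alpha-1+o(1)}$, which is the first term in the desired bound. For $T_2$ (and the $r_1 = r_2$ case $T_3$), the factor $d^{-1/4}$ (respectively $s^{-1/2}$) does not offer direct savings unless we exploit the divisibility relationships. The strategy here will be a dyadic split: write $r_1 = d u_1$, $r_2 = d u_2$ with $(u_1,u_2)=1$, so that $d u_1 u_2 \mid r$, and split according to the dyadic size of $d$. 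When $d$ is small the condition $r \ge d u_1 u_2 \ge N^2/d$ and the bound $r_i > N$ propagate through the sum over $(r_3,\ldots,r_k)$, whose joint lcm contributes another $k-2$ factors, while when $d$ is large the $d^{-1/4}$ factor wins; balancing these two regimes produces the exponent $1/(4(k-1))$ in the second term $H^{1/2} N^{-1/4(k-1)}$.

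The heart of the proof lies in extracting $\mc{E}$ from $T_4$. Since $T_4$ vanishes unless $HT^{-1} \le r_1, r_2 \le H^2$, $t > s$ and $st = d \le (HT)^{1/2}$, the parameters $(s,t)$ with $s \mid r_3 \cdots r_k$ and $(t, r_3 \cdots r_k) = 1$ naturally index the factorization of $d = (r_1,r_2)$ relative to the remaining $r_i$. I will interchange the order of summation, fixing $(s,t)$ and $\tau \in \mc{R}_B(t)$ and letting $a = r_1' = r_1/d$ and $b = r_2' = r_2/d$ vary freely subject to $(a,t) = (b,t) = 1$, $a, b \in \sgbsf$, and $N < a s t, b s t \le H^2$. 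Combining with the sum over $(r_3,\ldots, r_k)$, whose contribution is controlled using $s \mid r_3 \cdots r_k$ and the divisor bound, one arrives at a weighted sum of $F_H(\|as\tau\|/(as))^2 F_H(\|bs\tau\|/(bs))^2$ which, after applying Cauchy-Schwarz in the $a,b$ variables (and squaring inside the outer square root of $T_4$), collapses to $\mc{E}^{1/2}$. The $H^{-1/2}$ prefactor appears by combining the $H^{k/2}$ from the Fundamental Lemma with the explicit $H^{-2}$ in the definition of $T_4$ and the weightings.

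The main obstacle will be the $T_4$ term: the non-trivial bookkeeping of the coprimality conditions on $s, t$, the correct range for $a$ (in particular the inequality $ast \le H^2$ inherited from $r_1 \le H^2$ and the lower bound from $r_i > N$), and the application of Cauchy-Schwarz in the right variables must all match up precisely to produce the explicit $\mc{E}$ appearing in the statement, including the exponents $-\alpha - 3, -\alpha - 2, -\alpha -1$. The $T_2$ estimate is a secondary technical hurdle, as the exponent $1/(4(k-1))$ must be obtained by a careful balance rather than by a single application of a Fundamental-Lemma type bound.
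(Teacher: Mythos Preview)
Your treatment of $T_1$ is fine, and your plan for $T_4$ (Cauchy--Schwarz followed by the reparametrisation $r = r_1' r_2' s t w$) is essentially what the paper does. But your plan for $T_2$ and $T_3$ misses the central idea, and the same omission undermines your $T_4$ step as well.

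The exponent $1/(4(k-1))$ does \emph{not} come from a dyadic split in $d=(r_1,r_2)$. It comes from the freedom to choose \emph{which} pair of indices to feed into Lemma~\ref{lem:lem7gen}. The paper first separates the repeated tuples from the non-paired, non-repeated ones. In the repeated case one takes $r_{i_1}=r_{i_2}$ to be two of the three equal coordinates, so $d=r_{i_1}>N$, $t=1$, $s=d$, and immediately $T_2<N^{-1/4}$, $T_3<N^{-1/2}$, $T_4=0$. In the non-paired, non-repeated case one first observes (since $S_H(\mathbf r)=0$ otherwise) that every prime factor of $r$ divides at least two of the $r_i$, hence $r_i\le \prod_{j\ne i}(r_i,r_j)$, so for each $i$ there is a $j$ with $(r_i,r_j)>N^{1/(k-1)}$; a short pigeonhole argument then produces such a pair with $r_{i_1}\ne r_{i_2}$. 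Applying Lemma~\ref{lem:lem7gen} to \emph{that} pair gives $d>N^{1/(k-1)}$, hence $T_2<N^{-1/(4(k-1))}$ and $T_3=0$ automatically.

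Your dyadic approach, by contrast, applies the lemma to a fixed pair with no lower bound on $d$, and then hopes to recover savings for small $d$ from a constraint like $r\ge N^2/d$. That constraint does not interact with the condition $r>M$ in a way that yields the target shape (something)$\cdot M^{\alpha-1+o(1)}$, nor does it produce the specific exponent $1/(4(k-1))$ after balancing. More seriously, the lower bound $st\ge N^{1/(k-1)}$ appearing in the definition of $\mc E$ is precisely the bound $d>N^{1/(k-1)}$ obtained from this choice of indices; without it you cannot write down $\mc E$ as stated, and the subsequent estimate for $\mc E$ (which uses this lower bound) would not go through.
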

\begin{proof}
Let $\mathbf{r}$ be a vector which is either repeated or non-paired with $r > M$ and such that $r_i > N$ for all $i$. It suffices to bound
\[ \sum_{\substack{\mathbf{r} \text{ is repeated}\\r_1,\ldots,r_k > N\\r > M}} \prod_{i=1}^{k} \frac{\mobb^2(r_i)}{r_i} S_H(\mathbf{r})+\sum_{\substack{\mathbf{r} \text{ is non-paired}\\\text{and non-repeated}\\r_1,\ldots,r_k > N\\r > M}} \prod_{i=1}^{k} \frac{\mobb^2(r_i)}{r_i} S_H(\mathbf{r}).\]
If $\mathbf{r}$ is repeated with $r_{i_1}=r_{i_2} = r_{i_3}$, then we apply
Lemma~\ref{lem:lem7gen} with $r_{i_1},r_{i_2}$ in place of $r_1,r_2$, obtaining
\[ S_H(\mathbf{r}) \ll r_1 r_2 \cdots r_k r^{-1}  H^{\frac{k}{2}} (T_1 + T_2 + T_3 + T_4). \] 
(The parameter $T$ is taken to be $H/N$.) To study the $T_i$s, recall that $d=(r_{i_1},r_{i_2}) = r_{i_1}$ factors as $d=st$, where \[s \mid \prod_{i \neq i_1, i_2} r_i\quad\text{ and }\quad\big(t,\prod_{i \neq i_1, i_2} r_i\big)=1.\]
In our case, $t=1$ and $s=d=r_{i_1} >N$. We have \[T_1 = \frac{\log H}{(\frac{H}{N})^{\frac{1}{2}}},\quad T_2 = d^{-\frac{1}{4}} < N^{-\frac{1}{4}},\quad T_3=s^{-\frac{1}{2}} = d^{-\frac{1}{2}} < N^{-\frac{1}{2}}\quad\text{ and }\quad T_4 = 0,\]
so the total contribution of the $T_i$'s in the repeated case is at most 
\begin{equation}\label{eq:t1t2t3} \ll H^{\frac{k}{2}}\left( N^{-\frac{1}{4}}+ \frac{N^{\frac{1}{2}} \log H }{H^{\frac{1}{2}}}\right)  \sum_{\substack{r_1,\ldots,r_k>N\\r_i \in \sgbsf,\, r>M}} \frac{1}{r}  \ll H^{\frac{k-1}{2} + o(1)}   N^{\frac{1}{2}} \sum_{r > M} \frac{\tau_{B,\lcm,k}(r)}{r},
\end{equation}
which is absorbed in the error term since the series over $r$ is $\ll M^{\alpha-1+o(1)}$. 

Suppose that $\mathbf{r}$ is non-paired and also non-repeated. The contribution of $\mathbf{r}$ is $0$ if there is a prime $p$ dividing only one of the $r_i$ (as then $S_H(\mathbf{r})=0$), so we assume that each prime divisor of $r$ divides at least two of the $r_i$. This implies that $r_i \mid \prod_{j \neq i}r_j$. As in \cite[p.~323]{MV}, this implies $r_i \le \prod_{j \neq i} (r_j,r_i)$ and so for each $i$ there exists $j\neq i$ such that \[(r_j,r_i) \ge r_i^{\frac{1}{k-1}} > N^{\frac{1}{k-1}}.\]
We claim that there is at least one pair $r_i,r_j$ with $i \neq j$, $(r_j,r_i) > N^{1/(k-1)}$ and $r_i \neq r_j$. Indeed, if there is no such pair, it means that each value in the multiset $\{ r_i : 1 \le i \le k\}$ appears at least twice. We also know that each value appears at most twice, as we are in the non-repeated case. Hence each value appears twice, contradicting the fact that we are in the non-paired case. 

Hence necessarily $r_{i_1} \neq r_{i_2}$ with $(r_{i_1},r_{i_2}) > N^{1/(k-1)}$ for some $i_1 \neq i_2$. We again apply
Lemma~\ref{lem:lem7gen} with $r_{i_1},r_{i_2}$ in place of $r_1,r_2$, and $T=H/N$. By definition, $T_3=0$. As we have $T_1=(\log H)/(H/N)^{1/2}$ and  $T_2 = d^{-1/4}<N^{-1/(4(k-1))}$, we get that the total contribution of $T_1$, $T_2$ and $T_3$ in the non-paired case is at most
\begin{equation}\label{eq:t1t2t32} \ll H^{\frac{k}{2}}\left( N^{-\frac{1}{4(k-1)}} + \frac{(\log H) N^{\frac{1}{2}}}{H^{\frac{1}{2}}}\right)\sum_{r > M} \frac{\tau_{B,\lcm,k}(r)}{r},
\end{equation}
which is also absorbed in the error term.

We now treat the contribution of $T_4$ when $\mathbf{r}$ is non-paired and non-repeated, and show that it is at most $H^{k/2-1+o(1)}M^{\alpha-1+o(1)}\mc{E}^{1/2}$, so is absorbed as well. 

Assuming always that Lemma~\ref{lem:lem7gen} is applied with the first two elements of $\mathbf{r}$ (at the cost of a constant of size $k^2$ from permuting the indices), and recalling that if $\mathbf{r}$ is non-paired and non-repeated we may assume $(r_1,r_2)>N^{1/(k-1)}$,
we see that $T_4$ contributes at most
\begin{equation}\label{eq:t4cs}
\ll H^{\frac{k}{2}} \sum_{\mathbf{r} \in X}\frac{T_4(\mathbf{r})}{r} \le H^{\frac{k}{2}} \left(\sum_{\mathbf{r} \in X} \frac{T_4^2}{r}\right)^{\frac{1}{2}}\left(\sum_{\mathbf{r} \in X} \frac{1}{r}\right)^{\frac{1}{2}}
\end{equation}
by Cauchy--Schwarz, where
\[X = \{ \mathbf{r} \in \N_{>1}^k : N< r_1,r_2 \le H^{2}, \, N^{\frac{1}{k-1}} < d \le \frac{H}{\sqrt{N}}, t>d^{\frac{1}{2}},\,  r_i > N, \, r >M, \, r_i \in \sgbsf\}\]
and $d=(r_1,r_2)$, $s=(d,r_3 \cdots r_k)$, $t=d/s$. (Note that the condition $t>d^{1/2}$ is the same as $t>s$.) 
The second sum is at most  
\begin{equation}\label{eq:t4 second}
\sum_{r>M} \frac{\tau_{B,\lcm,k}(r)}{r} \ll M^{\alpha-1+o(1)}.
\end{equation}
To study the first sum, we write $r$ as $r^{\prime}_1 r^{\prime}_2 s t w$ where $r^{\prime}_i = r_i/d$ and $w = r/(r^{\prime}_1 r^{\prime}_2 s t)$. Note that $r^{\prime}_1$, $r^{\prime}_2$, $s$ and $t$ are pairwise coprime so $w$ must be an integer. Instead of summing over $r_1$ and $r_2$ we sum over $r^{\prime}_1$, $r^{\prime}_2$, $s$ and $t$ (so $r_1$ and $r_2$ are determined). Given $r_1$, $r_2$ and $w$ we have $r$; given $r$, $r_1$ and $r_2$ there are at most $\tau^{k-2}(r)$ possibilities for $\mathbf{r}$ with these values of $r_1$, $r_2$ and $r$, since each $r_i$ ($3 \le i \le k$) divides $r=wr^{\prime}_1 r^{\prime}_2 st$. Here $\tau$ is the usual divisor function. Hence we have
\begin{align}
\sum_{\mathbf{r} \in X} \frac{T_4^2}{r} &\le \sum_{\substack{r^{\prime}_1, r^{\prime}_2, s,t\in\sgbsf \text{ coprime}, \, t>s\\ N < r^{\prime}_i st \le H^{2}\, (i=1,2)\\ N^{1/(k-1)} \le st \le  H/\sqrt{N}}} \frac{1}{r^{\prime}_1 r^{\prime}_2 st} T_4^2 \sum_{\substack{w> M/(r^{\prime}_1 r^{\prime}_2 st)\\w\in\sgbsf\\(w,s)=1}} \frac{\tau^{k-2}(wr^{\prime}_1 r^{\prime}_2 st)}{w}\\
& \le \sum_{\substack{r^{\prime}_1, r^{\prime}_2, s,t\in\sgbsf \text{ coprime}, \, t>s\\ N < r^{\prime}_i st \le H^{2}\, (i=1,2)\\ N^{1/(k-1)} \le st \le  H/\sqrt{N}}} \frac{\tau^{k-2}(r^{\prime}_1 r^{\prime}_2 st)}{r^{\prime}_1 r^{\prime}_2 st} T_4^2 \sum_{\substack{w> M/(r^{\prime}_1 r^{\prime}_2 st)\\w\in\sgbsf}} \frac{\tau^{k-2}(w)}{w}.
\end{align}
Because $\sgbsf$ has index $\alpha$ and $\tau(n)=n^{o(1)}$, the innermost sum above is
\[ \ll \left(\frac{M}{r^{\prime}_1 r^{\prime}_2 st}\right)^{\alpha-1+o(1)},\]
so that
\begin{equation}
\sum_{\mathbf{r} \in X} \frac{T_4^2}{r} \le M^{\alpha-1+o(1)} H^{o(1)} \sum_{\substack{r^{\prime}_1, r^{\prime}_2, s,t\in\sgbsf \text{ coprime}, \, t>s\\ N < r^{\prime}_i st \le H^{2}\, (i=1,2)\\ N^{1/(k-1)} \le st \le  H/\sqrt{N}}} \frac{T_4^2}{(r^{\prime}_1 r^{\prime}_2 st)^{\alpha}}  .
\end{equation}
Plugging the definition of $T_4$ in the last equation, and first summing over $s,t$ and only later over $r_i^\prime$ we obtain
\begin{equation}
\sum_{\mathbf{r} \in X} \frac{T_4^2}{r} \ll M^{\alpha-1+o(1)} H^{o(1)-2}  \sum_{\substack{N^{1/(k-1)} \le st \le H/\sqrt{N}\\  (s,t)=1,\,s,t \in \sgbsf,\, t>s}} s^{-\alpha-3}t^{-\alpha-2} \sum_{\tau \in \mc{R}_{B}(t)}\bigg( \sum_{\substack{a: \, (a,t)=1\\  N<ast \le H^2 \\ a \in \sgbsf}} a^{-\alpha-1} F_H\left(  \frac{\|a s \tau \|}{a s} \right)^2 \bigg)^2.
\end{equation}
Plugging this bound in \eqref{eq:t4cs}, we end up with $H^{k/2-1+o(1)}M^{\alpha-1+o(1)}\mc{E}^{1/2}$.
\end{proof}

In order to get a good upper bound on $\mc{E}$ we must estimate the frequency with which $\| \|as\tau\|/(as)\|$ is smaller than $1/H$. We will do this by reduction to congruence conditions and we will bound sums over such congruence conditions using the following simple consequence of the P\'{o}lya--Vinogradov inequality. (For P\'{o}lya--Vinogradov see e.g. \cite[Theorem 12.5]{IK}).

\begin{lem}\label{lem:pv application}
Let $\chi$ be a Dirichlet character modulo $q$ and suppose $A \gg q$. We have
\[ \left|\sum_{1 \le |i| \le q/2} \chi(i)F_H\left(\frac{i}{A}\right)^2\right| \ll \begin{cases} AH & \text{if }\chi \text{ is principal,}\\   \min\left\{AH, H^2\sqrt{q} \log q\right\} & \text{if }\chi \text{ is non-principal.}\end{cases}\]
\end{lem}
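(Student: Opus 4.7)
The plan is to treat the principal and non-principal cases separately, with the key observation that since $A \gg q \ge 2|i|$, we have $|i/A| \le 1/2$, so $\|i/A\| = |i|/A$ and consequently
\[
F_H\!\left(\frac{i}{A}\right)^2 = \min\!\left\{H^2, \frac{A^2}{i^2}\right\}.
\]

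For the principal character (and also for the first half of the non-principal bound), I would simply estimate trivially using $|\chi(i)|\le 1$ and split the sum at $|i|=A/H$:
\[
\sum_{1 \le |i| \le q/2}\min\!\left\{H^2,\frac{A^2}{i^2}\right\} \;\ll\; \sum_{|i|\le A/H} H^2 \;+\; \sum_{|i|>A/H}\frac{A^2}{i^2} \;\ll\; AH.
\]
This yields the bound $AH$ in both rows.

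For the nontrivial bound $H^2\sqrt{q}\log q$ when $\chi$ is non-principal, I would exploit cancellation via P\'olya--Vinogradov. Grouping $i$ and $-i$, write
\[
\sum_{1\le|i|\le q/2}\chi(i)F_H\!\left(\frac{i}{A}\right)^2 = \sum_{i=1}^{\lfloor q/2\rfloor}\bigl(\chi(i)+\chi(-i)\bigr)f(i),\qquad f(i):=\min\!\left\{H^2,\frac{A^2}{i^2}\right\}.
\]
Apply Abel summation to each of the two sums $\sum_{i=1}^{\lfloor q/2\rfloor}\chi(\pm i)f(i)$. Since $f$ is non-increasing on $\{1,2,\ldots\}$ with $f(1)\le H^2$, its total variation on $[1,q/2]$ is at most $H^2$. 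P\'olya--Vinogradov gives $\bigl|\sum_{j=1}^i\chi(\pm j)\bigr|\ll \sqrt{q}\log q$ uniformly in $i$, so Abel summation produces the claimed bound $H^2\sqrt{q}\log q$. Taking the minimum of the two estimates completes the non-principal case.

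There isn't really a serious obstacle: the only point requiring a moment's thought is verifying that the hypothesis $A \gg q$ forces $\|i/A\|=|i|/A$ (so that $F_H(i/A)$ is genuinely $\min\{H, A/|i|\}$ rather than something more delicate), and that $f$ is monotone, making the total-variation bound in Abel summation trivial.
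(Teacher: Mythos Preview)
Your proof is correct and follows essentially the same strategy as the paper's: both rely on the identification $F_H(i/A)^2=\min\{H^2,A^2/i^2\}$ (valid since $A\gg q$), the trivial bound for the $AH$ term, and P\'olya--Vinogradov for the $H^2\sqrt{q}\log q$ term. The only organizational difference is that the paper first splits the sum at $|i|=A/H$ into a flat piece $H^2\sum_{|i|\le A/H}\chi(i)$ and a tail $A^2\sum_{|i|>A/H}\chi(i)/i^2$, applying P\'olya--Vinogradov directly to the first and via partial summation to the second, whereas you keep the sum intact and apply Abel summation once, exploiting the monotonicity (hence bounded total variation $\le H^2$) of $f(i)=\min\{H^2,A^2/i^2\}$; the two routes are equivalent.
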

\begin{proof}
By the definition of $F_H$,
\begin{align}
\left|\sum_{1 \le |i| \le q/2} \chi(i)F_H\left(\frac{i}{A}\right)^2\right| \ll H^2 \left|\sum_{1 \le  |i| \le A/H} \chi(i)\right| + A^2 \left|\sum_{|i|>A/H} \frac{\chi(i)}{i^2}\right|.
\end{align}
The bound for principal characters is evident, so we now consider non-principal ones. The sum over smaller $|i|$ satisfies the required bound by combining the P\'{o}lya--Vinogradov inequality \[\sum_{1 \le |i| \le n} \chi(i) = O(\sqrt{q} \log q)\] with  the trivial bound $|\chi| \le 1$. To bound the sum over larger $i$, we can either use the trivial bound $\ll H/A$, or else again appeal to P\'{o}lya--Vinogradov together with partial summation as follows:
\[
\left|\sum_{i > N} \frac{\chi(b)}{b^2}\right| \ll \frac{1}{N^2} \left|\sum_{i \le N} \chi(i)\right| + \int_{N}^{\infty} \frac{\left|\sum_{n \le y} \chi(n)\right|}{y^3}dy \ll \frac{\sqrt{q} \log q}{N^2} ,
\]
where $N=A/H$.
\end{proof}

In what follows we use the notation $a \sim x$ to mean $x < a \leq 2x$. The following proposition will be used shortly.

\begin{prop} \label{prop:GrvBd}
Suppose $\sgb$ is of index $\alpha \in (0,1)$. Let  $s,t \in \sgbsf$ be coprime positive integers with $t \le H$. Let $r$ be a $B$-free divisor of $t$. Given $A \le H^2$ let
\[ G_{r,v}(\tau_1):= A^{-(\alpha+1)} \sum_{\substack{1 \le |i| \le t/(2r)\\ (i,t/r)=1}}  F_H\left(\frac{i}{2Ast/r} \right)^2 \sum_{\substack{a \sim A, \, a \in \sgbsf,\,(a,t)=1\\a \equiv  i s^{-1}(\tau_1/r)^{-1} \bmod t/r}} 1\]
for $\tau_1$ with $(t,\tau_1)=r$. We have
\begin{equation}\label{eq:pv corollary}
\sum_{\substack{\tau_1/t \in \mc{R}_{B}(t)\\ (t,\tau_1)=r}} G_{r,v}(\tau_1)^2\ll \frac{H^{2}}{\phi\left(\frac{t}{r}\right)} \left( \left(\frac{st}{r}\right)^2 A^{o(1)}+  A^{-2(\alpha+1)} 
\min \left\{  H^2 \frac{t^{1+o(1)}}{r}, \left(\frac{As t}{r}\right)^2\right\}
\sum_{\chi \bmod t/r} \left|\sum_{\substack{a\sim A, \, a \in \sgbsf\\ (a,t)=1}}\chi(a)\right|^2 \right).
\end{equation}
\end{prop}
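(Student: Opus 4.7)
The strategy is to square out $G_{r,v}(\tau_1)$ and interchange the order of summation, putting the $\tau_1$-sum on the inside. Writing $\tilde\tau=\tau_1/r$ and $q=t/r$, the congruence $a\equiv is^{-1}(\tau_1/r)^{-1}\pmod{t/r}$ is equivalent to $as\tilde\tau\equiv i\pmod{q}$. Hence expanding the square and swapping summation, the inner sum over $\tilde\tau\in(\mathbb{Z}/q\mathbb{Z})^\times$ counts units satisfying $a_1s\tilde\tau\equiv i_1$ and $a_2s\tilde\tau\equiv i_2\pmod q$ simultaneously; since $(a_j,q)=(i_j,q)=(s,q)=1$, this count is $1$ if $a_2i_1\equiv a_1i_2\pmod q$ and $0$ otherwise.

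The next step is to apply orthogonality of Dirichlet characters modulo $q$ to the indicator $\mathbf{1}[a_2i_1\equiv a_1i_2\pmod q]$. This yields
\[
\sum_{\substack{\tau_1/t\in\mathcal{R}_B(t)\\(t,\tau_1)=r}}G_{r,v}(\tau_1)^2=\frac{A^{-2(\alpha+1)}}{\phi(q)}\sum_{\chi\bmod q}\Bigl|\sum_{\substack{a\sim A,\,a\in\sgbsf\\(a,t)=1}}\chi(a)\Bigr|^2\Bigl|\sum_{\substack{1\le|i|\le q/2\\(i,q)=1}}\chi(i)F_H\!\left(\tfrac{i}{2Ast/r}\right)^2\Bigr|^2,
\]
which is the key factorized identity.

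Now I would split the outer $\chi$-sum into the principal character $\chi_0$ and the non-principal characters, and bound the inner $i$-sum by Lemma~\ref{lem:pv application} in each case (the hypothesis $A\gg q$ of that lemma becomes $2Ast/r\gg t/r$, i.e.\ $As\gg 1$, which is trivial). For $\chi_0$, the lemma gives $|\sum_i\chi_0(i)F_H^2|\ll AstH/r$, while the $a$-sum is bounded trivially using that $\sgbsf$ has index $\alpha$ (Lemma~\ref{lem:index_sgb_to_sgbsf}) by $A^{\alpha+o(1)}$. Multiplying these contributions produces the first term $\frac{H^2}{\phi(q)}(st/r)^2A^{o(1)}$. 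For non-principal $\chi$, the Pólya--Vinogradov half of Lemma~\ref{lem:pv application} gives $|\sum_i\chi(i)F_H^2|\ll\min\{AstH/r,\,H^2\sqrt q\log q\}$; using $\sqrt q\log q\le(t/r)^{1/2+o(1)}\le(t^{1+o(1)}/r)^{1/2}$ and squaring produces the factor $\min\{(Ast/r)^2,H^2t^{1+o(1)}/r\}$, and then bounding the sum over non-principal $\chi$ by the full character sum $\sum_{\chi\bmod q}|\sum_a\chi(a)|^2$ gives the second term of the stated bound.

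The main obstacle is recognizing that the seemingly messy quadruple sum decouples into an $a$-factor and an $i$-factor after character orthogonality applied to the bilinear congruence $a_2i_1\equiv a_1i_2\pmod q$; once this is done, the $i$-sum is precisely the shape handled by Lemma~\ref{lem:pv application}, and the $a$-sum is left intact (as the conclusion only requires $\sum_\chi|\sum_a\chi(a)|^2$, which will later be estimated via large-sieve–type inputs where this proposition is invoked). No additional cancellation among the $a$'s is needed here.
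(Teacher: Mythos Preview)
Your proposal is correct and follows essentially the same route as the paper: expand the square, observe that the $\tau_1$-sum collapses to the compatibility condition $a_1 i_2 \equiv a_2 i_1 \pmod{t/r}$, detect this with character orthogonality, and then invoke Lemma~\ref{lem:pv application} separately for the principal and non-principal characters. Your displayed identity is in fact an equality (the paper only records $\ll$ at that step), since the map $\tau_1\mapsto \tau_1/r$ gives a bijection between $\{\tau_1:\tau_1/t\in\mc{R}_B(t),\ (t,\tau_1)=r\}$ and $(\Z/(t/r)\Z)^\times$, and for fixed $i_1,i_2,a_1,a_2$ there is exactly one unit $\tilde\tau$ solving both congruences when the compatibility holds.
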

\begin{proof}
Expanding the square we have
\begin{align}
 \sum_{\substack{\tau_1/t \in \mc{R}_{B}(t)\\ (t,\tau_1)=r}} G_{r,v}(\tau_1)^2&\ll  A^{-2(\alpha+1)} \sum_{\substack{1 \le |i_1|,|i_2| \le t/(2r)\\ (i_1 i_2,t/r)=1}} F_H\left(\frac{i_1}{2Ast/r}\right)^2 F_H\left(\frac{i_2}{2Ast/r}\right)^2  \sum_{\substack{\tau_1/t \in \mc{R}_{B}(t)\\ (t,\tau_1)=r}} \sum_{\substack{a_1,a_2 \sim A,\, a_1,a_2 \in \sgbsf \\ (a_1 a_2,t) = 1 \\ a_1 \equiv i_1 s^{-1} (\tau_1/r)^{-1} \bmod t/r\\ a_2 \equiv i_2 s^{-1} (\tau_1/r)^{-1} \bmod t/r }} 1.
\end{align}
The two congruences in the innermost sum imply \begin{equation}\label{eq:new cong}
a_1 i_2 \equiv a_2 i_1 \bmod \frac{t}{r},
\end{equation}
and we may replace the inner double sum over $\tau_1$, $a_1$ and $a_2$ with the sum
\[   \sum_{\substack{a_1,a_2 \sim A,\, a_1,a_2 \in \sgbsf \\ (a_1 a_2,t) = 1 \\ a_1 i_2 \equiv a_2 i_1 \bmod t/r}}1.\]
We can detect \eqref{eq:new cong}  using orthogonality of characters, obtaining 
\begin{align}
 \sum_{\substack{\tau_1/t \in \mc{R}_{B}(t)\\ (t,\tau_1)=r}} G_{r,v}(\tau_1)^2 \ll& \frac{A^{-2(\alpha+1)}}{\phi\left(\frac{t}{r}\right)} \sum_{\chi \bmod t/r} \left( \sum_{1 \le |i_1| \le t/(2r)} \chi(i_1) F_H\left(\frac{i_1r}{2Ast}\right)^2 \sum_{\substack{a_1 \sim A,\, a_1 \in \sgbsf \\ (a_1,t) = 1 }} \bar{\chi}(a_1)\right) \\
&\times \left( \sum_{1 \le |i_2| \le t/r}\bar{\chi}(i_2) F\left(\frac{i_2r}{2Ast}\right)^2  \sum_{\substack{a_2 \sim A, \, a_2 \in \sgbsf\\(a_2,t)=1}} \chi(a_2)\right) \\
=& \frac{1}{\phi\left(\frac{t}{r}\right)} \sum_{\chi \bmod t/r} A^{-2(\alpha+1)} \left| \sum_{1 \le |i| \le t/(2r)}\chi(i) F_H\left(\frac{ir}{2Ast}\right)^2 \right|^2 \left|\sum_{\substack{a \sim A, \, a \in \sgbsf\\(a,t)=1}} \chi(a)\right|^2.
\end{align}
By Lemma~\ref{lem:pv application}, the contribution of the principal character $\chi= \chi_0$ is
\[ \ll \frac{A^{o(1)}}{\phi\left(\frac{t}{r}\right)} \left(\frac{H st}{r}\right)^2, \]
which gives the first term in the required bound. We now consider the non-principal characters. Applying the pointwise bound for the sum of $F$ twisted by $\chi$ as given in Lemma~\ref{lem:pv application}, we see that they contribute 
\[
\ll  H^{2} A^{-2(\alpha+1)} \min \left\{ \frac{H^2 t}{r} \log^2 t, \left(\frac{Ast}{r}\right)^2\right\}\frac{1}{\phi\left(\frac{t}{r}\right)}\sum_{\chi_0 \neq \chi \bmod t/r} \left| \sum_{\substack{a \sim A, \, a \in \sgbsf\\(a,t)=1}} \chi(a)\right|^2.
\]
This gives the second contribution to the bound, and we are done.
\end{proof}
\begin{prop}\label{prop:t4}
Let $N$ be in the range $H \ge N \ge H^{8/9}$. Suppose that 
$\sgb$ has index $\alpha \in (0,1)$. In the notation of Proposition~\ref{prop:e2 e3}, we have
\[ \mc{E} \ll H^{2+o(1)} \left(\left(\frac{H}{N^{\frac{3}{2}}}\right)^{\alpha} + N^{-\frac{\alpha}{k-1}}\right).\]
\end{prop}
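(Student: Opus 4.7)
The plan is to reduce $\mc E$ to the quantity estimated in Proposition~\ref{prop:GrvBd} via dyadic decomposition of the inner $a$-sum, and then bound the resulting character sums by orthogonality. First I would split $a$ dyadically, writing $a\sim A$ with $A$ a power of $2$; since there are only $O(\log H)$ such pieces, expanding the square via $(\sum_A x_A)^2 \le (\log H)\sum_A x_A^2$ costs only $H^{o(1)}$. On each dyadic piece, using $a^{-\alpha-1}\asymp A^{-\alpha-1}$ together with the monotonicity of $F_H$ (which allows me to replace $\|as\tau\|/(as)$ by $\|as\tau\|/(2As)$, since $a\le 2A$) brings the inner sum into a form matching the quantity $G_{r,A}(\tau_1)$ of Proposition~\ref{prop:GrvBd}.

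To reach the precise form of that proposition, I would parameterize $\tau=\tau_1/t\in\mc R_B(t)$ by $r:=(t,\tau_1)$, a $B$-free divisor of $t$, writing $\tau_1=r\tau_1'$ with $(\tau_1',t/r)=1$. Since $(as,t/r)=1$, one has $\|as\tau\|=|i|/(t/r)$ where $i\equiv as\tau_1'\pmod{t/r}$ is the least-absolute-value representative; grouping the $a$'s by the residue class $i$ then identifies the dyadic piece of the inner sum with $G_{r,A}(\tau_1)$. Invoking Proposition~\ref{prop:GrvBd}, and bounding its character sum by orthogonality --- it equals $\phi(t/r)$ times the count of pairs $a_1\equiv a_2\pmod{t/r}$ with $a_i\sim A$, $a_i\in\sgbsf$, $(a_i,t)=1$, which is $\ll A^{\alpha+o(1)}(1+A/(t/r))$ by the index-$\alpha$ hypothesis on $\sgbsf$ (Lemma~\ref{lem:index_sgb_to_sgbsf}) and a trivial lattice-point count --- yields a workable two-term upper bound for $\sum_{\tau_1:(t,\tau_1)=r} G_{r,A}(\tau_1)^2$.

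Finally I would sum over dyadic $A$ in the range $N/(st)\le A\lesssim H^2/(st)$ (the lower bound coming from $N<ast$), over $B$-free $r\mid t$ (using $\sum_{r\mid t}(t/r)^{1+o(1)}\ll t^{1+o(1)}$), and over coprime $s,t\in\sgbsf$ with $N^{1/(k-1)}\le st\le H/\sqrt N$. The first term from Proposition~\ref{prop:GrvBd}, of shape $H^2(st/r)^2/\phi(t/r)$, after weighting by $s^{-\alpha-3}t^{-\alpha-2}$ and performing the $r$-, $s$-, $t$-sums, produces $H^{2+o(1)}\sum_{u\ge N^{1/(k-1)}}\tau(u)u^{-\alpha-1}\ll H^{2+o(1)}N^{-\alpha/(k-1)}$, the savings coming from the lower bound $st\ge N^{1/(k-1)}$. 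The secondary, character-sum contribution contains $A^{-\alpha}$-type factors and is thus dominated by the smallest allowed $A\asymp N/(st)$; combined with the upper bound $st\le H/\sqrt N$, which forces $A\ge N^{3/2}/H$, it produces the factor $(N^{3/2}/H)^{-\alpha}$, accounting for the term $H^{2+o(1)}(H/N^{3/2})^\alpha$.

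The main obstacle will be the case analysis required to reach these estimates. One must separately treat $A<t/r$ versus $A\ge t/r$ (which controls whether the congruence-pair count is $\ll A^{\alpha+o(1)}$ or $\ll A^{\alpha+o(1)}\cdot A/(t/r)$), and also decide whether $A$ lies above or below $H/(s(t/r)^{1/2})$ (which determines which side of the minimum $\min\{H^2t/r,(Ast/r)^2\}$ dominates in Proposition~\ref{prop:GrvBd}). No individual sub-case is hard, but verifying that every sub-case is absorbed by one of the two stated bounds demands careful book-keeping of all the constraints $N/(st)\le A\le H^2/(st)$ and $N^{1/(k-1)}\le st\le H/\sqrt N$.
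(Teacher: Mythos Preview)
Your overall architecture is correct and matches the paper up through the point where you invoke Proposition~\ref{prop:GrvBd} and rewrite the character sum via orthogonality as a congruence-pair count. The gap is in the next step: the pointwise bound
\[
\#\{a_1,a_2\sim A:\ a_i\in\sgbsf,\ a_1\equiv a_2\!\!\pmod{t/r}\}\ \ll\ A^{\alpha+o(1)}\Big(1+\frac{A}{t/r}\Big)
\]
is not strong enough when summed over $s,t,r,A$. Concretely, in the sub-case $A\ge t/r$ together with $A\le H/(s\sqrt{t/r})$ (so the minimum in Proposition~\ref{prop:GrvBd} equals $(Ast/r)^2$), your bound produces a term of shape $H^2\,A^{1-\alpha}\,s^2(t/r)$ after all cancellations. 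Since $1-\alpha>0$, this \emph{grows} in $A$; summing over dyadic $A$ up to $H/(s\sqrt{t/r})$, then over $r\mid t$ and $s,t\in\sgbsf$, gives a contribution of size $H^{3-\alpha+o(1)}$, which is far larger than the target $H^{2+o(1)}\big((H/N^{3/2})^\alpha+N^{-\alpha/(k-1)}\big)$. So your claim that ``the secondary contribution contains $A^{-\alpha}$-type factors and is dominated by the smallest allowed $A$'' is false in this sub-case.

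The missing idea is to separate the diagonal $a_1=a_2$ from the off-diagonal $a_1\neq a_2$. The diagonal genuinely behaves as you describe and gives the $(H/N^{3/2})^\alpha$ term. For the off-diagonal, a pointwise-in-$\ell$ bound is too crude; instead one must \emph{swap the order of summation}: fix $a_1\neq a_2$ first, and then sum over moduli $\ell=t/r$ with $\ell\mid(a_1-a_2)$. By the divisor bound there are only $H^{o(1)}$ such $\ell$, and each carries a weight $\ell^{-\alpha}$ (or $\ell^{-\alpha-1}$) bounded using the lower constraint $\ell\ge N^{1/(k-1)}/(rs)$. This is exactly what converts the off-diagonal contribution into the $N^{-\alpha/(k-1)}$ term. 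Without this swap, no combination of the bounds $A^\alpha(1+A/\ell)$ or even $A^{2\alpha}$ for the pair count will close the argument.
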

\begin{proof}
We dyadically decompose the inner sum over $a$ in the definition of $\mc{E}$.
Given coprime integers $s, t \in \sgbsf$ with $t>1$ and $\tau=\tau_1/t \in \mc{R}_{B}(t)$, and setting $r:=(t,\tau_1)$ for convenience, we have
\begin{align}\label{eq:sum a in u} \sum_{\substack{a \sim U, \, a \in \sgbsf\\ (a,t)=1}} F_H \left( \frac{\| as \tau \|}{a s} \right)^2 &\ll \sum_{\substack{1 \le  i \le t/(2r)\\(i,t/r)=1}}  F_H\left(\frac{i}{2 Ust/r} \right)^2 \sum_{\substack{a \sim U,\, a\in \sgbsf,\,(a,t)=1\\ a \equiv  \pm i s^{-1}(\tau_1/r)^{-1} \bmod t/r}} 1\\
& \ll \sum_{\substack{1 \le  |i| \le t/(2r)\\(i,t/r)=1}}  F_H\left(\frac{i}{2 Ust/r} \right)^2 \sum_{\substack{a \sim U,\, a \in  \sgbsf,\, (a,t)=1\\ a \equiv  i s^{-1}(\tau_1/r)^{-1} \bmod t/r}} 1
\end{align}
for any positive integer $U$, as we now explain. 

First, we may replace \[F_H\left( \frac{\|as \tau\|}{as}\right) \quad \text{ with }\quad F_H\left(\frac{\| as \tau \|}{2Us}\right)\]
as in general if $0<x,y \le 1/2$ satisfy $y \ll x$ then $F_H(x) \ll F_H(y)$. The denominator of the fraction $\| a s \tau \|$ in reduced form is exactly $t/(t,\tau_1)$, since both $s$ and $a$ are coprime with $t$. 
We write the left-hand side of \eqref{eq:sum a in u} as a sum over the possible values of $\| as \tau \|$, and need to count the number of times a given value is obtained, that is, count solutions to $\| a s \tau\| =  i /(t/r)$, which is an equation that determines $a$ modulo $t/r$ up to a sign, yielding the new inner sum over $a$.

Putting this in the definition of $\mc{E}$ and applying the Cauchy--Schwarz inequality, we thus obtain,
\begin{align} 
\mc{E} 
 & \ll H^{o(1)} \sum_{\substack{N^{1/(k-1)} \le st \le H/\sqrt{N}\\  (s,t)=1,\,s,t \in \sgbsf,\,t>s}} s^{-\alpha-3}t^{-\alpha-2} 
 \sum_{r \mid t} \sum_{2^v \in (N/(2st),H^2/(st))} \sum_{\substack{\tau_1/t \in \mc{R}_{B}(t)\\ (t,\tau_1)=r}}  G^2_{r,v}.
\end{align}
Now, by orthogonality we have
\[
\frac{1}{\phi\left(\frac{t}{r}\right)}\sum_{\chi \bmod{t/r}} \bigg|\sum_{\substack{a \sim 2^v\\ a \in \sgbsf}} \chi(a)\bigg|^2 = \sum_{\ss{a_1,a_2 \sim 2^v \\ a_1,a_2\in \sgbsf \\ (a_1a_2,t/r) = 1}} \mathbf{1}_{a_1 \equiv a_2 \bmod{t/r}}.
\]
Thus, using this in the conclusion of Proposition \ref{prop:GrvBd} with $A=2^v$, we get
\[
\sum_{\substack{\tau_1/t \in \mc{R}_{B}(t)\\ (t,\tau_1)=r}}  G^2_{r,v} \ll H^{2}\left(\frac{s^2 \left(\frac{t}{r}\right)^2}{\phi\left(\frac{t}{r}\right)} H^{o(1)} + 2^{-2v} \min\left\{\frac{H^2t}{r}, \left(\frac{2^v st}{r}\right)^2 \right\} \cdot \frac{1}{2^{2\alpha v}} \sum_{\ss{a_1,a_2 \sim 2^v \\ a_1,a_2 \in \sgbsf}} \mathbf{1}_{a_1 \equiv a_2 \bmod{t/r}}\right).
\]
Inserting this estimate back into our upper bound for $\mc{E}$, we obtain a bound \[\mc{E} \ll H^{2+o(1)}(\mc{E}_1 + \mc{E}_2 + \mc{E}_3),\]
where
\begin{align*}
\mc{E}_1 :=& \sum_{\ss{N^{1/(k-1)} < st \leq H/\sqrt{N}\\ (s,t) = 1,\, t > s,\, s,t \in \sgbsf}} (ts)^{-\alpha-1} \sum_{r\mid t} r^{-1} , \\
\mc{E}_2 :=& \sum_{\ss{N^{1/(k-1)} < st \leq H/\sqrt{N} \\ (s,t) = 1,\, t > s,\, s,t \in \sgbsf}} s^{-\alpha-3}t^{-\alpha-2}\sum_{r\mid t} \sum_{N/(2st) < 2^v \leq H^2/(st)} 2^{-2(\alpha+1)v} \min\left\{\frac{H^2t}{r}, \left(\frac{2^v st}{r}\right)^2\right\} \sum_{\ss{a_1, a_2 \sim 2^v \\ a_1,a_2 \in \sgbsf \\ a_1 = a_2}} 1, \\
\mc{E}_3 :=& \sum_{r \leq H/\sqrt{N}} \sum_{\ss{s \in \sgbsf \\ s \leq H/(r\sqrt{N})}} s^{-\alpha-3} \sum_{N^{3/2}/(2H) < 2^v \leq H^2/N^{1/(k-1)}} 2^{-2(\alpha+1)v} \\
&\times \sum_{\ss{ a_1,a_2 \sim 2^v \\ a_1,a_2 \in \sgbsf \\ a_1 \neq a_2}}\quad\sum_{\ss{\ell : \, r\ell \in \sgbsf \\ \max\left\{N^{1/(k-1)},N/2^{v+1}\right\} < rs\ell \leq \min\left\{H/\sqrt{N}, H^2/2^v\right\}}} (r\ell)^{-\alpha-2} \min\left\{H^2\ell, (2^v s\ell)^2\right\} \mathbf{1}_{\ell\mid (a_1-a_2)},
\end{align*}
where the variable $\ell$ in $\mc{E}_3$ represents the value of $t/r$.
We estimate each of these terms in sequence. Utilizing the index $\alpha$ of $\sgbsf$, we easily bound
\[
\mc{E}_1 \ll H^{o(1)}\sum_{\ss{N^{1/(k-1)} < m \leq H/\sqrt{N} \\ m \in \sgbsf}} \tau(m)m^{-\alpha-1} \ll H^{o(1)} \cdot (N^{\frac{1}{k-1}})^{\alpha - (\alpha+1)+o(1)} \ll H^{o(1)} N^{-\frac{1}{k-1}}
\]
where $\tau$ is the usual divisor function.
To treat $\mc{E}_2$ we split the range of $2^v$ at $H/(s(t/r)^{1/2})$ for $s,t$ and $r\mid t$ given, which yields
\begin{align*}
\mc{E}_2 &\ll H^{o(1)}\sum_{\ss{N^{1/(k-1)} < st \leq H/\sqrt{N} \\ (s,t) = 1,\, t > s,\,s,t \in \sgbsf}} s^{-\alpha-3}t^{-\alpha-2} \sum_{r\mid t}\left( \sum_{\frac{N}{2st} < 2^v \leq \frac{H}{s(t/r)^{1/2}}} 2^{-(2+\alpha)v} \left(\frac{2^v st}{r}\right)^2 + \sum_{\frac{H}{s(t/r)^{1/2}} < 2^v \leq \frac{H^2}{st}} 2^{-(2+\alpha)v} \frac{H^2t}{r}\right) \\
&\ll H^{o(1)}\sum_{\ss{N^{1/(k-1)} < st \leq H/\sqrt{N} \\ (s,t) = 1,\, t > s,\, s,t \in \sgbsf}} s^{-\alpha-1}t^{-\alpha} \sum_{r\mid t} \frac{1}{r^2} \left(\frac{st}{N}\right)^{\alpha} + H^{2+o(1)}\sum_{\ss{N^{1/(k-1)} < st \leq H/\sqrt{N} \\ (s,t) = 1,\, t > s,\,s,t \in \sgbsf}} s^{-\alpha-3}t^{-\alpha-1} \sum_{r\mid t}\frac{1}{r} \sum_{2^v > \frac{H}{s(t/r)^{1/2}}} 2^{-(2+\alpha)v} \\
&\ll H^{o(1)}N^{-\alpha} \sum_{\ss{s \leq H/\sqrt{N} \\ s \in \sgbsf}} s^{-1} \sum_{\ss{t \leq H/(s\sqrt{N}) \\ t \in \sgbsf}} 1 + H^{-\alpha+o(1)} \sum_{\ss{N^{1/(k-1)} < st \leq H/\sqrt{N} \\ (s,t) = 1,\, t > s,\,s,t \in \sgbsf}} s^{-1} t^{-\frac{\alpha}{2}} \sum_{r\mid t} r^{-2-\frac{\alpha}{2}} \\
&\ll H^{o(1)} \left(\left(\frac{H}{N^{\frac{3}{2}}}\right)^{\alpha} +  H^{-\alpha} \left(\frac{H}{\sqrt{N}}\right)^{\frac{\alpha}{2}}\right) \ll H^{o(1)} \left(\frac{H}{N^{\frac{3}{2}}}\right)^{\alpha}.
\end{align*}
It remains to treat $\mc{E}_3$. In this case, we split the range according to the condition $\ell \leq (H/(s2^v))^{2}$ as, when this inequality holds, \[\min\{H^2\ell, (2^v s \ell)^2\} = 2^{2v} s^2 \ell^2.\] Hence, we further bound $\mc{E}_3 \ll \mc{E}_3' + \mc{E}_3''$, where we define
\begin{align*}
\mc{E}_3' :=&  \sum_{r \leq H/\sqrt{N}}r^{-\alpha-2} \sum_{\ss{s \in \sgbsf \\ s \leq H/(r\sqrt{N})}} s^{-\alpha-1} \sum_{\frac{N^{3/2}}{2H} < 2^v \leq \frac{H^2}{N^{1/(k-1)}}} \frac{1}{2^{2\alpha v}} \\
&\times \sum_{\ss{ a_1,a_2 \sim 2^v \\ a_1,a_2 \in \sgbsf \\ a_1 \neq a_2}}\quad \sum_{\ss{\ell : \, r\ell \in \sgbsf \\ \max\left\{\frac{N^{1/(k-1)}}{rs},\frac{N}{rs2^{v+1}}\right\} < \ell \leq \min\left\{\frac{H}{rs\sqrt{N}}, \left(\frac{H}{s2^v}\right)^{2}\right\} \\ \ell > s/r}} \ell^{-\alpha} \mathbf{1}_{\ell\mid (a_1-a_2)}, \\
\mc{E}_3'' :=& H^2 \sum_{r \leq H/\sqrt{N}} r^{-\alpha-2} \sum_{\ss{s \in \sgbsf \\ s \leq H/(r\sqrt{N})}} s^{-\alpha-3} \sum_{\frac{N^{3/2}}{2H} < 2^v \leq \frac{H^2}{N^{1/(k-1)}}} 2^{-2v} \cdot \frac{1}{2^{2\alpha v}} \\
&\times \sum_{\ss{ a_1,a_2 \sim 2^v \\ a_1,a_2 \in \sgbsf \\ a_1 \neq a_2}}\quad \sum_{\ss{\ell : \, r\ell \in \sgbsf \\ \max\left\{\frac{N^{1/(k-1)}}{rs},\left(\frac{H}{s2^{v}}\right)^{2}\right\} < \ell \leq \min\left\{\frac{H}{rs\sqrt{N}}, \frac{H^2}{rs2^v}\right\} \\ \ell > s/r}} \ell^{-\alpha-1} \mathbf{1}_{\ell\mid (a_1-a_2)}.
\end{align*}
Consider first $\mc{E}_3'$. Note that the number of integers $\ell$ dividing  $a_1-a_2$ is \[\ll |a_1-a_2|^{o(1)} = H^{o(1)}.\] The inner sum in $\mc{E}_3'$ is therefore \[\ll H^{o(1)}\min\left\{\frac{rs}{N^{\frac{1}{k-1}}}, \frac{2^vrs}{N}\right\}^{\alpha}.\] 

Bounding the minimum trivially by $rs/N^{1/(k-1)}$, we obtain
\begin{align*}
 \mc{E}_3' &\ll H^{o(1)} \sum_{r \leq H/\sqrt{N}}r^{-\alpha-2} \sum_{\ss{s \in \sgbsf \\ s \leq H/(r\sqrt{N})}} s^{-\alpha-1} \sum_{N^{3/2}/(2H) < 2^v \leq H^2/N^{1/(k-1)}}  \frac{(rs)^{\alpha}}{N^{\frac{\alpha}{k-1}}} \\
&\ll H^{o(1)} N^{-\frac{\alpha}{k-1}}.
\end{align*}
Finally, we treat $\mc{E}_3''$ in a similar way, using a divisor bound to count $\ell\mid (a_1-a_2)$. This leads to
\begin{align*}
\mc{E}_3'' \ll H^{2+o(1)} \sum_{r \leq H/\sqrt{N}}r^{-\alpha-2} \sum_{\ss{s \in \sgbsf \\ s \leq H/\sqrt{N}}} s^{-\alpha-3} \sum_{N^{3/2}/(2H) < 2^v \leq H^2/N^{1/(k-1)}} 2^{-2v} \min\left\{\frac{rs}{N^{\frac{1}{k-1}}}, \left(\frac{s2^v}{H}\right)^{2}\right\}^{\alpha + 1}.
\end{align*}
Using the bound $\min\{A,B\}^{\alpha+1} \leq A^\alpha B$, we obtain
\begin{align*}
\mc{E}_3'' &\ll H^{2+o(1)} \sum_{r \leq H/\sqrt{N}}r^{-\alpha-2} \sum_{\ss{s \in \sgbsf \\ s \leq H/\sqrt{N}}} s^{-\alpha-3} \sum_{N^{3/2}/(2H) < 2^v \leq H^2/N^{1/(k-1)}} 2^{-2v} \frac{(rs)^\alpha}{N^{\frac{\alpha}{k}-1}} \left(\frac{s 2^v}{H}\right)^2 \\
&\ll \frac{H^{o(1)}}{N^{\frac{\alpha}{k-1}}} \sum_{r \leq H/\sqrt{N}} \frac{1}{r^2} \sum_{\ss{s \in \sgbsf \\ s \leq H/\sqrt{N}}} \frac{1}{s} \ll H^{o(1)} N^{-\frac{\alpha}{k-1}}.
\end{align*}
It follows then that $\mc{E}_3 \ll H^{o(1)} N^{-\alpha/(k-1)}$. Combining this with our prior estimates for $\mc{E}_1$ and $\mc{E}_2$, we obtain
\[
\mc{E} \ll H^{2+o(1)} \left(N^{-\frac{1}{k-1}} + \left(\frac{H}{N^{\frac{3}{2}}}\right)^{\alpha} + N^{-\frac{\alpha}{k-1}}\right) \ll H^{2+o(1)} \left(\left(\frac{H}{N^{\frac{3}{2}}}\right)^{\alpha} + N^{-\frac{\alpha}{k-1}}\right),
\]
as claimed.
\end{proof}

\begin{rem}\label{rem:alphaHalf}

It is worth highlighting the main novelty of our argument over the work in \cite[Lemma 8]{MV}, specifically the treatment of the expressions $G_{r,v}(\tau_1)$ in the notation of Proposition \ref{prop:GrvBd}. 

For simplicity, assume that $r = 1$ and, fixing $v$, write $A = 2^v$. In the context of \cite{MV} we may replace $\sgbsf$ with the set of divisors $t$ of the (squarefree) modulus $q$, in which case one may estimate the inner sum in $G_{1,v}(\tau_1)$ using the simple bound
\[
\sum_{\ss{a \sim A,\, a\mid q \\ (a,t) = 1 \\ a \equiv b \bmod{t}}} 1 \leq \sum_{\ss{a \sim A \\ a \equiv b \bmod{t}}} 1 \ll \frac{A}{t} + 1,
\]
\emph{uniformly} over both reduced residues $b \bmod{t}$, and over $t$. This follows from the equidistribution in residue classes of integers in an interval. While somewhat crude, this estimate suffices to produce the required power savings in $H$ in Montgomery and Vaughan's analogue of $\mc{E}$, as found in Proposition \ref{prop:e2 e3}.

In contrast, when $B$ is a sparse set of index $\alpha < 1$ we cannot reasonably hope for such equidistribution in residue classes in general. Even if, say, the optimistic bound
\[
\sum_{\ss{a \sim A, a \in \sgbsf \\ (a,t) = 1 \\ a \equiv b \bmod{t}}} 1 \ll (A t)^{o(1)} \left(\frac{A^{\alpha}}{t} + 1\right)
\]
held uniformly in $b \in (\mb{Z}/t\mb{Z})^{\times}$, tracing through the remainder of the proof of \cite[Lemma 8]{MV} we would find that the corresponding savings obtained is of the shape $H^{1/2 - \alpha +o(1)}$, and therefore only provides power savings for $\alpha > 1/2$. To deal with the most general situation (i.e., potentially with $\alpha \leq 1/2$ and no guarantee of equidistribution) we cannot simply rely on pointwise counts for elements of $\sgbsf$ in residue classes. The proof of Proposition \ref{prop:t4} demonstrates that power savings in $H$ may be obtained upon \emph{averaging} in both the residue class $b \bmod{t}$ and the modulus $t$.
\end{rem}

\section{The central limit theorem for general weights $\vp$}

\subsection{A proof of the central limit theorem}
We assemble the estimates of the previous sections to show that $C_k(H;\vp)$ and therefore $M_k(X,H;\vp)$ exhibit Gaussian behavior.

\begin{thm}\label{thm:C_general_gaussian}
Suppose $\vp$ is of bounded variation and supported in a compact subset of $[0,\infty)$. Assume moreover that $\vp$ is non-vanishing on some open interval. If $\sgb$ has index $\alpha \in (0,1)$ then
\[
C_k(H;\vp) =  C_2(H;\vp)^{\frac{k}{2}} \left(\mu_k + O(H^{-\frac{c}{k}})\right)
\]
for every positive integer $k$. Here $c$ is an absolute constant depending only on $\alpha$.
\end{thm}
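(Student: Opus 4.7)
The plan is to assemble the estimates from Sections~4 and~5. Combining Lemma~\ref{lem:diag} with Proposition~\ref{prop:diag} will express, for even $k\ge 4$,
\[
C_k(H;\vp) = \mu_k\, C_2(H;\vp)^{k/2}\bigl(1 + O(H^{-\alpha+o(1)})\bigr) + O\bigl(E_k(H;\vp)\bigr),
\]
where
\[
E_k(H;\vp) := \sum_{\substack{\mathbf r\text{ repeated or non-paired}\\ r_i > 1}}\prod_{i=1}^{k} \frac{\mobb^2(r_i)}{r_i}\,S_H(\mathbf r);
\]
for odd $k\ge 3$ only the $O(E_k)$ term survives, since $\mu_k=0$. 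Because Lemma~\ref{lem:estimate c2rho} gives $C_2(H;\vp) = H^{\alpha+o(1)}$, the diagonal error $H^{-\alpha+o(1)}$ is amply subsumed in any target of the form $H^{-c/k}$, and the cases $k\le 2$ are trivial (for $k=1$ the sum defining $C_1$ is empty since $\sigma\in \mc{R}_B(r)\cap\Z$ forces $(r,r)=r$ to be $B$-free; for $k=2$ every tuple is paired and non-repeated). Thus the whole task reduces to establishing
\[
E_k(H;\vp) \ll H^{k\alpha/2 - c/k + o(1)}
\]
for some $c = c(\alpha) > 0$, uniformly in $k\ge 3$.

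To bound $E_k$ I would split the sum on $\mathbf r$ into three regions according to parameters $M, N$: (i) $r \le M$; (ii) $r > M$ with $r_i \le N$ for some $i$; (iii) $r > M$ with $r_i > N$ for all $i$. These are exactly the regimes controlled by Lemma~\ref{lem:hn1}, Lemma~\ref{lem:hn2}, and Proposition~\ref{prop:e2 e3} (with $\mc{E}$ bounded via Proposition~\ref{prop:t4}), respectively. The natural choice is $N = H^{8/9}$, the smallest value allowed in Proposition~\ref{prop:e2 e3}, together with $M = H^{k/2 - \eta/k}$ for a small parameter $\eta = \eta(\alpha) > 0$. Then region~(i) contributes
\[
\ll M^{\alpha+o(1)} = H^{k\alpha/2 - \eta\alpha/k + o(1)},
\]
region~(ii) contributes
\[
\ll H^{(k-1)/2 + 4/9}\, M^{\alpha-1+o(1)} = H^{k\alpha/2 - 1/18 + O(1/k) + o(1)},
\]
and each of the four terms in the bracket of Proposition~\ref{prop:e2 e3}, after being multiplied by $M^{\alpha-1}$, yields a bound of the shape $H^{k\alpha/2 - c'/k + o(1)}$ or better.

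The hard part will be region~(iii). Among the four terms there, those with $N^{-1/(4(k-1))}$ and $N^{-\alpha/(2(k-1))}$ factors offer only $\Theta(1/k)$-scale savings on the exponent, and thus dictate how large $c(\alpha)$ may be taken. Tracking the constraints carefully, all contributions can be balanced provided $c(\alpha)$ is a sufficiently small positive constant depending only on $\alpha$—explicitly, any $c$ up to roughly $4\alpha^2/9$ will work, with $\eta$ chosen proportional to $c/\alpha$. With such a choice of $c$, combining the three regional bounds for $E_k$ with the opening decomposition yields the desired estimate and completes the proof.
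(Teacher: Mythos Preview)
Your proposal is correct and follows essentially the same route as the paper: decompose via Lemma~\ref{lem:diag} and Proposition~\ref{prop:diag}, then bound the repeated/non-paired contribution by splitting according to $r \le M$, $r > M$ with some $r_i \le N$, and $r > M$ with all $r_i > N$, invoking Lemmas~\ref{lem:hn1}--\ref{lem:hn2} and Propositions~\ref{prop:e2 e3}--\ref{prop:t4}. The only cosmetic difference is the choice of $N$: you take $N = H^{8/9}$ (the bottom of the admissible range), whereas the paper takes $N = H^{1-c_1}$ with $c_1$ small; both choices lead to a power saving of size $\Theta(1/k)$ in the exponent after setting $M = H^{k/2 - \eta/k}$, and your more explicit tracking of the constant (up to roughly $4\alpha^2/9$) is consistent with what the paper's looser phrasing would yield.
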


\begin{proof}
By Lemma~\ref{lem:diag} and Proposition~\ref{prop:diag} we have
\begin{equation}
 C_k(H;\vp) = \mu_k C_2(H;\vp)^{\frac{k}{2}} \left( 1 + O\left(H^{-\alpha+o(1)}\right)\right) + O \bigg( \sum_{\substack{\mathbf{r} \text{ is repeated}\\\text{or non-paired}\\r_1,\ldots, r_k>1}}
 \prod_{i=1}^{k} \frac{\mobb^2(r_i)}{r_i}S(\mathbf{r}) \bigg).
 \end{equation}
Let $H^{8/9} \le N \le H$ and $M>1$. By Lemmas~\ref{lem:hn1}--\ref{lem:hn2} and Proposition~\ref{prop:e2 e3} we have
\begin{equation}
    \sum_{\substack{\mathbf{r} \text{ is repeated}\\\text{or non-paired}\\r_1,\ldots, r_k>1}}
 \prod_{i=1}^{k} \frac{\mobb^2(r_i)}{r_i}S(\mathbf{r}) \le M^{\alpha+o(1)}  +  H^{\frac{k-1}{2}+o(1)} \left(  N^{\frac{1}{2}} + H^{\frac{1}{2}}  N^{-\frac{1}{4(k-1)}} + \mc{E}^{\frac{1}{2}} H^{-\frac{1}{2}} \right) M^{\alpha-1+o(1)} .
\end{equation}
We apply Proposition~\ref{prop:t4} to bound $\mc{E}$, so that the right-hand side is
\[
\ll M^{\alpha + o(1)} \left( 1 + \frac{H^{\frac{k}{2}+o(1)}}{M}\left(\left(\frac{N}{H}\right)^{\frac{1}{2}} + \left(\frac{H}{N^{\frac{3}{2}}}\right)^{\frac{\alpha}{2}} + N^{-\frac{\min\{\alpha/2,1/4\}}{k-1}}\right)\right).
\]
We now choose $N=H^{1-c_1}$ and $M=H^{k/2-c_2/k}$ where $c_i$ are sufficiently small with respect to $\alpha$, and recall $C_2(H;\vp)=H^{\alpha+o(1)}$ by Lemma~\ref{lem:estimate c2rho}.
\end{proof}

Obviously this implies Theorem \ref{thm:main b} and thus the central limit theorem, Theorem \ref{thm:bdistGauss} for flat counts in short intervals. In fact more generally, combining Theorem \ref{thm:C_general_gaussian} with Proposition \ref{prop:general_limiting_moments} we see that as long as $H \leq X^{c_\alpha/k-\e}$, we have
\begin{equation}\label{eq:momExpB}
M_k(X,H;\vp) = C_2(H;\vp)^{\frac{k}{2}}\left( \mu_k + O(H^{-\frac{c}{k}})\right).
\end{equation}
By the moment method this implies that weighted counts also satisfy a central limit theorem.
\begin{thm}\label{thm:CLT_general_weights}
Let $H = H(X)$ satisfy \[H\ra \infty, \text{ yet }\,\,\frac{\log H}{\log X} \ra 0\text{ as }X \ra \infty,\] and choose a random integer $n \in [1,X]$ at uniform. Suppose that $\sgb$ is a regularly varying sequence of index $\alpha \in (0,1)$ and $\vp$ is a real-valued function of bounded variation and supported in a compact subset of $[0,\infty)$ and non-vanishing on some open interval. Then the random variable
\[
\frac{1}{\sqrt{C_2(H;\vp)}}\left( \sum_{u\in \Z} \vp\Big(\frac{u-n}{H}\Big)\bfree(u) - \meanb \sum_{h \in \Z}\vp\Big(\frac{h}{H}\Big) \right)
\]
tends to the standard normal distribution $N_\mb{R}(0,1)$ as $X\ra \infty$.
\end{thm}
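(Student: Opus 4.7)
The plan is to apply the classical moment method: we will verify that for every fixed positive integer $k$, the $k$-th moment of the normalized random variable
\[
Y_X := \frac{1}{\sqrt{C_2(H;\vp)}}\left( \sum_{u\in \Z} \vp\Big(\frac{u-n}{H}\Big)\bfree(u) - \meanb \sum_{h \in \Z}\vp\Big(\frac{h}{H}\Big) \right)
\]
converges to $\mu_k$, the $k$-th moment of $N_\mb{R}(0,1)$, as $X \to \infty$. Since the standard normal distribution is determined by its moments, the convergence in distribution will then follow from the moment-method theorem in \cite[Section 30]{Billingsley1995}.

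First I would write $\E Y_X^k = M_k(X,H;\vp) / C_2(H;\vp)^{k/2}$ directly from the definition of $M_k$. Combining Proposition \ref{prop:general_limiting_moments} and Theorem \ref{thm:C_general_gaussian} yields
\[
M_k(X,H;\vp) = C_2(H;\vp)^{k/2}\bigl(\mu_k + O(H^{-c/k})\bigr) + O\bigl(H^k X^{\frac{\alpha-1}{\alpha+1} + \e} + 1\bigr),
\]
valid for every $\e > 0$. Dividing by $C_2(H;\vp)^{k/2}$ and using Lemma \ref{lem:estimate c2rho} in the form $C_2(H;\vp) = H^{\alpha + o(1)}$ (note that the hypotheses on $\vp$ of Theorem \ref{thm:CLT_general_weights} are exactly those of Lemma \ref{lem:estimate c2rho}), we obtain
\[
\E Y_X^k = \mu_k + O(H^{-c/k}) + O\bigl(H^{k(1-\alpha/2) + o(1)} X^{\frac{\alpha-1}{\alpha+1} + \e}\bigr) + O\bigl(H^{-k\alpha/2 + o(1)}\bigr).
\]
Since $H \to \infty$ and the hypothesis $\log H / \log X \to 0$ forces $H = X^{o(1)}$, while $(\alpha-1)/(\alpha+1) < 0$, each error term tends to $0$ as $X \to \infty$ with $k$ fixed. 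Hence $\E Y_X^k \to \mu_k$ for every fixed $k \geq 1$.

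There is no serious obstacle at this stage; the work has already been done in Proposition \ref{prop:general_limiting_moments} and Theorem \ref{thm:C_general_gaussian}. The only mildly delicate point to verify is that the error $H^k X^{(\alpha-1)/(\alpha+1) + \e}$ from the approximation of $M_k$ by $C_k$ is dominated by $C_2(H;\vp)^{k/2} = H^{k\alpha/2 + o(1)}$ once divided through; this follows because, given any fixed $k$ and any $\e' > 0$, one has $H \leq X^{\e'}$ for all sufficiently large $X$, so choosing $\e'$ small (depending on $k$ and $\alpha$) makes $H^{k(1-\alpha/2)} X^{(\alpha-1)/(\alpha+1) + \e}$ a negative power of $X$. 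With moment convergence established for every $k$, the moment method gives the desired convergence of $Y_X$ to $N_\mb{R}(0,1)$ in distribution, completing the proof.
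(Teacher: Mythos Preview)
Your proof is correct and follows essentially the same approach as the paper: combine Proposition \ref{prop:general_limiting_moments} with Theorem \ref{thm:C_general_gaussian} to obtain the moment asymptotic \eqref{eq:momExpB}, then invoke the moment method. The paper compresses this into a single sentence before stating the theorem, while you have helpfully spelled out the verification that the error terms vanish under $H = X^{o(1)}$.
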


\subsection{An application to long gaps} \label{subsec:longGaps}

Estimate \eqref{eq:momExpB} allows us to obtain strong information about the frequency of long gaps between consecutive $B$-free integers. Given $1 \leq H \leq X$, let
\[
\mc{G}(X,H) := \{ n \leq X : \Nbfree(n,H) = 0\}.
\]
Thus, $|\mc{G}(X,H)|$ records the number of length $H$ intervals with an endpoint $n \in [1,X]$ that contains no $B$-free numbers. 

Improving on work of Plaksin \cite{Plaksin1990}, Matom\"{a}ki \cite{Matomaki2012} used a sieve-theoretic method to show that for any $\varepsilon > 0$,
\[
|\mc{G}(X,H)| \ll XH^{-1+\varepsilon} \text{ for $1 \leq H \leq X^{\frac{1}{6}-\varepsilon}$}
\]
(with no upper bound constraint on the range of $H$ if $B$ consists only of primes). As a consequence of our $k$th-moment bounds, we can prove the following.
\begin{cor}\label{cor:EXhBds}
If $\sgb$ is of index $\alpha \in (0,1)$ then for any $k \geq 1$ and $1\leq H \leq X^{c_{\alpha}/k - \e}$ we have $|\mc{G}(X,H)|\ll_k XH^{-(2-\alpha) k}$. 
\end{cor}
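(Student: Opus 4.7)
The approach is to exploit the fact that every $n \in \mc{G}(X, H)$ realizes the extremal deviation $\Nbfree(n, H) - \meanb H = -\meanb H$, which has absolute value of order $H$. This makes $\mc{G}(X, H)$ detectable by a Markov/Chebyshev-type tail bound using the high central moments already controlled in Theorem \ref{thm:main b}. Concretely, for any positive integer $m$, each $n \in \mc{G}(X, H)$ contributes exactly $(\meanb H)^{2m}$ to the sum defining $X \cdot M_{2m, B}(X, H)$, while all other terms are non-negative. This yields
\[
|\mc{G}(X, H)| \cdot (\meanb H)^{2m} \;\leq\; \sum_{n \leq X} \bigl(\Nbfree(n, H) - \meanb H\bigr)^{2m} \;=\; X \cdot M_{2m, B}(X, H).
\]

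Next I would invoke the Gaussian moment asymptotic \eqref{eq:momExpB} with $\vp = \intervalind$, together with the variance bound $C_2(H) \ll H^{\alpha + o(1)}$ from Lemma \ref{lem:estimate c2rho}. This gives $M_{2m, B}(X, H) \ll_m C_2(H)^m \ll_m H^{m\alpha + o(1)}$, uniformly in the range $H \leq X^{c_\alpha/(2m) - \e}$ of Remark \ref{rem:gaussian_limited_H}. Rearranging produces
\[
|\mc{G}(X, H)| \;\ll_m\; X H^{m(\alpha - 2) + o(1)} \;=\; X H^{-(2-\alpha) m + o(1)}.
\]
To finish for a given $k \geq 1$, take $m$ to be an integer slightly larger than $k$ (e.g.\ $m = \lceil k \rceil + 1$), so that the $o(1)$ slack in the exponent is absorbed into the $k$-dependent implied constant, producing $|\mc{G}(X, H)| \ll_k X H^{-(2-\alpha) k}$. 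The allowed range $H \leq X^{c_\alpha/(2m) - \e}$ takes the form stated in the corollary after replacing $c_\alpha$ with an appropriately rescaled constant that still depends only on $\alpha$.

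The proof carries no genuine analytic obstacle: all the heavy lifting is concentrated in the moment asymptotic \eqref{eq:momExpB} and the variance bound of Lemma \ref{lem:estimate c2rho}, and the corollary is a clean Markov-type consequence. The only mildly delicate point is the bookkeeping in the final paragraph, namely reconciling the factor of $2$ between the stated range $H \leq X^{c_\alpha/k - \e}$ and the range $H \leq X^{c_\alpha/(2m) - \e}$ produced naturally by invoking a $(2m)$-th moment, which is handled by the aforementioned rescaling of $c_\alpha$.
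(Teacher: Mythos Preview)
Your proposal is correct and follows essentially the same Markov/Chebyshev approach as the paper: both observe that $n \in \mc{G}(X,H)$ forces $(\Nbfree(n,H)-\meanb H)^{2m} = (\meanb H)^{2m}$ and then bound the even moment using Theorem~\ref{thm:main b} together with $C_2(H) = H^{\alpha+o(1)}$. The only cosmetic difference is that the paper applies this directly with $m=k$ and writes $\ll_k H^{k\alpha}$ (tacitly absorbing the $H^{o(1)}$), whereas you are slightly more scrupulous about the $o(1)$ and pass to a larger $m$; your bookkeeping remark about the range of $H$ is also apt, since the paper's proof, using the $2k$-th moment, strictly gives the range $H \le X^{c_\alpha/(2k)-\e}$ rather than $H \le X^{c_\alpha/k-\e}$.
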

Since we have $(2-\alpha)k \geq 1$ whenever $k \geq 1$ and $\alpha \in (0,1)$, our result improves on that of Matom\"{a}ki in some range of $H$. Note that, for instance, if $k = 2$ then $c_{\alpha}/k \geq 1/6$ whenever $0 < \alpha < (7-\sqrt{33})/2 = 0.6277...$, and our range contains hers, at least if $B$ does not consist only of primes.
\begin{proof}
If $n \in \mc{G}(X,H)$ then we of course have
\[
\left(\Nbfree(n,H) - \mc{M}_B H\right)^{2k} = (\meanb H)^{2k}.
\]
Combined with Proposition \ref{prop:index_variance} and Theorem \ref{thm:main b}, this implies that if $H \leq X^{c_{\alpha}/k-\e}$ then
\begin{align*}
\frac{|\mc{G}(X,H)|}{X} (\mc{M}_B H)^{2k} &\leq \frac{1}{X}\sum_{ n \leq X} \left(\Nbfree(n,H) - \meanb H\right)^{2k} = C_{2k,B}(H) + o(H^{k\alpha}) \\
&= \mu_{2k} C_2(H)^k + o(H^{k\alpha}) \ll_k H^{k\alpha}.
\end{align*}
We deduce immediately that $|\mc{G}(X,H)| \ll_k X H^{k\alpha- 2k} = X H^{-k(2-\alpha)}$, as claimed.
\end{proof}

\section{Fractional Brownian motion}\label{sec:fBm_proof}

\subsection{Convergence in $C[0,1]$}

We now prove Theorem \ref{thm:fBm_main}, showing that a random walk on the $B$-frees tends to a fractional Brownian motion. There is ready-made machinery to demonstrate that a sequence of random elements of $C[0,1]$, like $W_X(t)$ in \eqref{eq:W_def}, tend in distribution to a limiting element and we cite the relevant results here. (For a motivated exposition on convergence of random functions in $C[0,1]$, see for instance \cite{Billingsley99}.)

\begin{thm}\label{thm:finite_dim_and_tightness}
If $Y, Y_1, Y_2,\ldots$ are random elements of $C[0,1]$, then $Y_n$ converges in distribution to $Y$ as $n\rightarrow\infty$ as long as both of the following conditions are met:
\begin{enumerate}[label=(\roman*)]
\item (convergence of finite-dimensional distributions) for any $k\geq 1$ and $(t_1,\ldots,t_k) \in [0,1]^k$, we have convergence in distribution of the random vector
\[
(Y_n(t_1),\ldots,Y_n(t_k)) \rightarrow (Y(t_1),\ldots,Y(t_k)),
\]
\item (tightness) the sequence of random elements $Y_n$ of $C[0,1]$ is tight.\footnote{ That is for all $\e > 0$, there is a compact subset $K$ of $C[0,1]$ such that $\mb{P}(Y_n \notin K) < \e$ for all sufficiently large $n$. For $C[0,1]$ this is equivalent to the condition that $Y_n(0)$ is a tight family of real-valued random variables and $\lim_{\delta\rightarrow 0} \limsup_{n\rightarrow\infty}\mb{P}(\omega(Y_n, \delta) \geq \e) = 0$, where the modulus of continuity of a function $f \in C[0,1]$ is given by $\omega(f,\delta) = \sup_{|s-t|\leq \delta} |f(s)-f(t)|;$ see \cite[Theorem 7.3]{Billingsley99}.}
\end{enumerate}
\end{thm}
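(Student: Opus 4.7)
The plan is to reduce the claim to two standard pieces of abstract measure-theoretic machinery on the Polish space $C[0,1]$: Prokhorov's theorem relating tightness and relative compactness of probability measures, and the fact that the finite-dimensional cylinder sets generate the Borel $\sigma$-algebra of $C[0,1]$ (so the law of a random element of $C[0,1]$ is determined by its finite-dimensional distributions). With these two ingredients, the argument is structural and does not require any detailed analytic estimates.

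First I would pass from the random functions $Y_n$ to the probability measures $\mu_n$ they induce on $C[0,1]$ (and likewise $\mu$ for $Y$). By hypothesis~(ii), the family $\{\mu_n\}$ is tight. Since $C[0,1]$ equipped with the sup norm is a complete separable metric space, Prokhorov's theorem applies and tells us that $\{\mu_n\}$ is relatively compact in the topology of weak convergence. In particular, every subsequence $\{\mu_{n_k}\}$ admits a further subsequence $\{\mu_{n_{k_j}}\}$ converging weakly to some probability measure $\nu$ on $C[0,1]$.

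Next I would identify any such subsequential limit $\nu$ with $\mu$. For each fixed $k\ge 1$ and $(t_1,\dots,t_k)\in[0,1]^k$, the evaluation map $\mathrm{ev}_{t_1,\dots,t_k}\colon C[0,1]\to\mathbb{R}^k$, $f\mapsto(f(t_1),\dots,f(t_k))$, is continuous, so weak convergence $\mu_{n_{k_j}}\Rightarrow\nu$ forces the pushforwards under $\mathrm{ev}_{t_1,\dots,t_k}$ to converge weakly in $\mathbb{R}^k$. But by hypothesis~(i), those same pushforwards converge to the law of $(Y(t_1),\dots,Y(t_k))$, i.e.\ to the finite-dimensional distribution of $\mu$ at $(t_1,\dots,t_k)$. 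Hence $\nu$ has the same finite-dimensional distributions as $\mu$. Since finite-dimensional cylinder sets generate the Borel $\sigma$-algebra on $C[0,1]$ (this follows from separability together with the fact that continuous functions on $[0,1]$ are determined by their values on any dense set), the law of a $C[0,1]$-valued random variable is uniquely determined by its finite-dimensional distributions, and thus $\nu=\mu$.

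The final step is the standard ``every subsequence has a further subsequence converging to the same limit implies the full sequence converges'' argument: because $\{\mu_n\}$ is relatively compact and every subsequential limit equals $\mu$, the whole sequence $\mu_n$ converges weakly to $\mu$, i.e.\ $Y_n$ converges in distribution to $Y$ in $C[0,1]$. The only conceptual obstacle is the uniqueness statement that fidis determine the measure on $C[0,1]$, which rests on the separability of $C[0,1]$ and continuity of its elements; everything else is a direct invocation of Prokhorov and of the continuous mapping theorem applied to the evaluation maps.
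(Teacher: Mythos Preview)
Your argument is correct and is precisely the standard proof of this classical fact: tightness plus Prokhorov gives relative compactness, continuity of evaluation maps plus hypothesis~(i) identifies every subsequential limit via its finite-dimensional distributions, and uniqueness of measures on $C[0,1]$ given their fidis finishes the job.

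The paper does not actually prove this theorem; it simply records it as a direct consequence of \cite[Lemma~16.2 and Theorem~16.3]{Kallenberg}. What you have written is exactly the content behind that citation (Kallenberg's Lemma~16.2 is the statement that fidis determine the law on $C[0,1]$, and Theorem~16.3 is the Prokhorov-based reduction of convergence in distribution to tightness plus convergence of fidis). So your approach is not different from the paper's in spirit; you have merely unpacked the reference rather than invoking it.
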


\begin{proof}
This is a direct consequence of \cite[Lemma 16.2 and Theorem 16.3]{Kallenberg}.
\end{proof}

We also have the following device for proving tightness.

\begin{thm}[Kolmogorov--Chentsov]\label{thm:Kolmogorov_Chentsov}
Using the notation of the previous theorem, if $Y_n(0) = 0$ for all $n$ and if there are an absolute constant $C$ and constants $a,b > 0$ such that
\[
\sup_n\, \E\, |Y_n(s) - Y_n(t)|^a \leq C|s-t|^{1+b},
\]
for all $s,t \in [0,1]$, then the sequence of random elements $Y_n$ of $C[0,1]$ is tight.
\end{thm}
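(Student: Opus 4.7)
The plan is to apply the second criterion of Theorem \ref{thm:finite_dim_and_tightness}, specifically the characterization of tightness on $C[0,1]$ via the modulus of continuity described in the footnote. Since $Y_n(0) = 0$ for all $n$, the family of initial values $\{Y_n(0)\}$ is trivially tight, so the work is entirely in establishing that for every $\e > 0$,
\[
\lim_{\delta \to 0} \limsup_{n \to \infty} \mb{P}\bigl( \omega(Y_n, \delta) \geq \e \bigr) = 0.
\]
To do this I would carry out the classical dyadic chaining argument. For each integer $m \geq 0$ let $\mc{D}_m = \{k/2^m : 0 \leq k \leq 2^m\}$ denote the dyadic grid at scale $2^{-m}$, and for each $1 \leq k \leq 2^m$ consider the increment $\Delta_{m,k}^{(n)} := Y_n(k/2^m) - Y_n((k-1)/2^m)$. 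By Markov's inequality applied to the $a$-th power, together with the hypothesis,
\[
\mb{P}\bigl( |\Delta_{m,k}^{(n)}| > 2^{-\g m}\bigr) \leq 2^{\g a m}\, \E\, |\Delta_{m,k}^{(n)}|^{a} \leq C \cdot 2^{\g a m} \cdot 2^{-(1+b)m},
\]
uniformly in $n$. I would choose a parameter $\g \in (0, b/a)$, so that $\g a - b < 0$. A union bound over $1 \leq k \leq 2^m$ then gives
\[
\mb{P}\bigl( \max_{1 \leq k \leq 2^m} |\Delta_{m,k}^{(n)}| > 2^{-\g m} \bigr) \leq C \cdot 2^{(1+\g a - (1+b))m} = C \cdot 2^{-(b-\g a)m},
\]
which is a geometric sequence in $m$ that is summable.

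The chaining step then goes as follows. For any $s, t \in [0,1]$ with $0 < |s-t| \leq 2^{-M}$, a standard telescoping expansion along the dyadic grid expresses $Y_n(t) - Y_n(s)$ as a sum of at most two increments at each scale $m \geq M$ (provided $Y_n$ is evaluated only at dyadic rationals; since $Y_n$ will be continuous in applications to $W_X$ this is handled by passing to a limit, or in the abstract statement by considering the dyadic-rational version and applying a continuity argument as in \cite[Theorem 7.3]{Billingsley99}). Hence on the complement of the bad event $\bigcup_{m \geq M} \{\max_k |\Delta_{m,k}^{(n)}| > 2^{-\g m}\}$, one obtains
\[
\omega(Y_n, 2^{-M}) \leq 2 \sum_{m \geq M} 2^{-\g m} \leq C' \cdot 2^{-\g M},
\]
which tends to $0$ as $M \to \infty$. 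The probability of the bad event is at most $\sum_{m \geq M} C \cdot 2^{-(b-\g a)m}$, which is bounded uniformly in $n$ and tends to $0$ as $M \to \infty$. Combining these, given $\e > 0$ we first choose $M$ large so that both $C' \cdot 2^{-\g M} < \e$ and the tail probability is $< \e$, and then take $\delta = 2^{-M}$; this yields the required modulus-of-continuity bound uniformly in $n$, and hence tightness by the criterion cited in the footnote of Theorem \ref{thm:finite_dim_and_tightness}.

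The main obstacle in writing out a fully rigorous proof is the slightly delicate handover between the dyadic grid and arbitrary real $s, t$: the moment hypothesis a priori controls only pairs $(s,t)$, and one needs either to pass through a separable modification of $Y_n$ or to exploit continuity of $Y_n$ to upgrade the dyadic bound to a full modulus of continuity bound. Beyond this bookkeeping, the argument is the standard Kolmogorov--Chentsov proof and no further arithmetic input is required.
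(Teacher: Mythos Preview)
Your argument is correct: this is the standard Kolmogorov--Chentsov dyadic chaining proof, and the obstacle you flag (passing from dyadic points to all of $[0,1]$) is indeed the only delicate step, handled routinely via continuity of the $Y_n$.

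The paper takes a different route in that it does not prove the statement at all: it simply records that this is a special case of \cite[Corollary 16.9]{Kallenberg}. So you have supplied a self-contained argument where the paper defers to the literature. What you gain is that your write-up is independent of Kallenberg's formulation; what the citation buys is brevity and the assurance that the technical bookkeeping (separable modifications, etc.) has been done once and for all in a standard reference.
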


\begin{proof}
This is a special case of \cite[Corollary 16.9]{Kallenberg}.
\end{proof}

\subsection{The $B$-free random walk}

We apply these results to the random functions $W_X(t)$ with $X\ra\infty$. We need one last lemma regarding regularly varying sequences.

\begin{lem}\label{lem:upperbound_regularvar}
If we have a sequence of natural numbers $J$ is regularly varying with index $\alpha \in (0,1)$ then for any $\e > 0$,
\[
\frac{N_J(tH)}{N_J(H)} \ll_\e t^{\alpha-\e},
\]
for all $t \leq 1$ and $H$ larger than the first element of $J$.
\end{lem}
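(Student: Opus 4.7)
The plan is to reduce the statement to Potter's bounds for slowly varying functions, applied to the function $L$ in the representation $N_J(x) \sim x^{\alpha} L(x)$. Recall that Potter's theorem (standard in the theory of regularly varying sequences, a special case of results in \cite[Chapter~4.1]{PolyaSzego1}) states that given any $\delta > 0$, there exist constants $A_\delta \geq 1$ and $X_\delta \geq 1$ such that for all $x \geq y \geq X_\delta$,
\[
\frac{L(y)}{L(x)} \leq A_\delta \left(\frac{x}{y}\right)^{\delta}.
\]
I would set $\delta = \e/2$, fix $X_\delta$ accordingly, and argue in cases depending on the sizes of $H$ and $tH$ relative to $X_\delta$.

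In the main regime $H \geq X_\delta$ and $tH \geq X_\delta$, I would apply Potter's bound with $x = H$ and $y = tH$. Combined with the asymptotic $N_J(x) = x^\alpha L(x)(1 + o(1))$, this immediately yields
\[
\frac{N_J(tH)}{N_J(H)} \leq 2\, \frac{(tH)^\alpha L(tH)}{H^\alpha L(H)} \leq 2 A_\delta\, t^{\alpha - \delta} = 2 A_\delta\, t^{\alpha - \e/2},
\]
which is stronger than what is claimed.

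The remaining work is to handle the boundary regimes $H \leq X_\delta$ or $tH \leq X_\delta$, where Potter does not apply. Here I would use a trivial counting argument: if $tH$ is smaller than the smallest element $j_1$ of $J$, then $N_J(tH) = 0$ and the bound is automatic; otherwise $tH \geq j_1$, so $t \geq j_1/H$, which gives the lower bound $t^{\alpha - \e} \gg_\e H^{-(\alpha - \e)}$. On the other hand, $N_J(tH) \leq N_J(X_\delta) = O_\e(1)$, while $N_J(H) \geq 1$ (when $H \leq X_\delta$) or $N_J(H) \gg H^{\alpha - \e/2}$ (when $H > X_\delta$), in either case dominating the ratio comfortably in comparison with $t^{\alpha - \e}$.

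The main (and only) potential subtlety is the bookkeeping in the boundary cases to make sure the implicit constants genuinely depend only on $\e$ and not on $H$ or $t$; but there is no real obstacle since all the small-scale quantities are absorbed into $\e$-dependent constants. No deeper input is needed beyond Potter's theorem and monotonicity of $N_J$.
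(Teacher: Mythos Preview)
Your proposal is correct and takes essentially the same approach as the paper. The paper derives the Potter-type inequality $L(tH)/L(H)\ll_\e t^{-\e}$ from Karamata's representation and then extends it to all $tH,H\ge 1$ by a compactness argument (normalizing $L$ to be bounded below on compacta), whereas you cite Potter's bounds directly and handle the boundary regimes at the level of $N_J$ using that $N_J$ is monotone, integer-valued, and that $t$ is bounded below when $tH\ge j_1$; both arguments amount to the same thing, and your boundary bookkeeping checks out.
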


\begin{proof}
Obviously the result is true if $tH < 1$, so suppose $tH \geq 1$.

If $J \subset \N$ is regularly varying with index $\alpha \in (0,1)$, then there is some slowly varying $L$ such that $N_J(x) \ll x^\alpha L(x)$ for all $x$ and $N_J(x) \gg x^\alpha L(x)$ for all $x$ larger than the first element of $J$. For convenience we will take $L(x)$ to be defined for all $x \geq 1$ with \[\inf_{x \in [1,K]} L(x) > 0\text{ for any }K > 0.\]
(The reader should check that this may be done.)

It then follows from Karamata's representation of slowly varying functions (see \cite[consequence (2.5) of Theorem 2.2 in p. 180]{Korevaar}) that if $t\leq 1$, 
\[
\frac{L(tH)}{L(H)} \leq 2 t^{-\e},
\]
for $tH$ and $H$ sufficiently large depending on $\e$. By compactness then
\[
\frac{L(tH)}{L(H)} \ll_\e t^{-\e}
\]
for $tH, H \geq 1$, which implies the claim.
\end{proof}

\begin{proof}[Proof of Theorem \ref{thm:fBm_main}]
Let $\{Z(t):\, t \in [0,1]\}$ be a fractional Brownian motion with Hurst parameter $\alpha/2$. By Theorem \ref{thm:finite_dim_and_tightness} we need only demonstrate (i) the finite dimensional distributions of $W_X(t)$ tend to those of $Z(t)$ and (ii) tightness for the family $W_X$. 

Let us treat (i) first. Note for each $t \in [0,1]$ we have $\mb{E}\, W_X(t) \ra 0$ as $X \ra \infty$ (which agrees of course with $\mb{E}\, Z(t) = 0$). Moreover, for $s,t \in [0,1]$ with $s > t$,
\begin{align*}
\E\, |W_X(s) - W_X(t)|^2 &= \frac{1}{A_\alpha N_{\sgb}(H)} \frac{1}{X} \sum_{n = \lfloor tH \rfloor}^{X + \lfloor sH \rfloor} |\Nbfree(n,\lfloor (s-t)H \rfloor) - \mc{M}_B \lfloor (s-t)H \rfloor + O(1) |^2 \\
&\sim \frac{N_{\sgb}((s-t)H)}{N_{\sgb}(H)} \sim (s-t)^\alpha,
\end{align*}
by Proposition \ref{prop:regularlyvarying_variance}. As $W_X(0) = 0$, this allows one to deduce that $W_X(t)$ has the same limiting covariance function as $Z(t)$.

Thus we will have the finite dimensional distributions of $W_X(t)$ tend to those of $Z(t)$ if we show for any $k\geq 1$ and any $t_1,\ldots,t_k$ that the vector $(W_X(t_1),\ldots,W_X(t_k))$ tends to a Gaussian vector. By the Cram\'er--Wold device \cite[Theorem 29.4]{Billingsley1995} this will be true if for any fixed real numbers $\theta_1,\ldots,\theta_k$ the random variable
\[
\sum_{j=1}^k \theta_j W_X(t_j) = \frac{1}{\sqrt{A_\alpha N_{\sgb}(H)}} \sum_{j=1}^k \theta_j Q(t_j\cdot H)
\]
tends to a real valued Gaussian distribution. But
\[
\sum_{j=1}^k \theta_j Q(t_j\cdot H) = \sum_{u \in \Z} \vp\Big(\frac{u-n}{H}\Big) \bfree(u) - \meanb \sum_{h \in \Z} \vp\Big(\frac{h}{H}\Big) + O(1),\quad \textrm{for}\quad \vp = \sum \theta_j \mathbf{1}_{(0,t_j]},
\]
and so the Gaussian behavior follows from Theorem \ref{thm:CLT_general_weights}.

We now demonstrate (ii). Note that for any positive integer $\nu$ and $\e > 0$, we have
\begin{align*}
\E\, |W_X(s) - W_X(t)|^{2\nu} &= \frac{1}{(A_\alpha N_{\sgb}(H))^{\nu}} \frac{1}{X} \sum_{n = \lfloor tH \rfloor}^{X + \lfloor sH \rfloor} |\Nbfree(n,\lfloor (s-t)H \rfloor) - \mc{M}_B \lfloor (s-t)H \rfloor + O(1) |^{2\nu} \\
&\ll_\nu \frac{(N_{\sgb}((s-t)H)+1)^\nu}{N_{\sgb}(H)^\nu}  \ll_{\nu, \e} |s-t|^{(\alpha-\e) \nu},
\end{align*}
as long as $X$ (and thus $H$) is sufficiently large so that $N_{\sgb}(H)$ is non-zero, using Lemma \ref{lem:upperbound_regularvar} in the last step.
Hence choosing $\e$ smaller than $\alpha$ and $\nu$ large enough that $(\alpha -\e) \nu > 1$, condition (ii) follows from the Kolmogorov--Chentsov Theorem. This completes the proof.
\end{proof}

\bibliography{Sfree}
\bibliographystyle{plain}

\Addresses

\end{document}